\renewcommand{\epsilon}{\varepsilon}
\newcommand{\pnorm}[2][]{\if #1'' \left|#2\right|_p \else \left|#2\right|_{#1} \fi}
\newcommand*\diff{\mathop{}\!\mathrm{d}}
\newcommand{\norm}[1]{|\!|#1|\!|}
\newcommand{\R}{{\mathbb R}}
\newcommand{\N}{{\mathbb N}}
\newcommand{\Assg}[1]{\textup{(g)}}
\begin{document}
	\title [Existence results to generalized $p(\cdot)$-Laplace equations]
	{On sufficient ``local'' conditions for existence results to generalized $p(\cdot)$-Laplace equations involving critical growth}
	
	\author[K. Ho]{Ky Ho}
	\address{Ky Ho\newline
		Institute of Applied Mathematics, University of Economics Ho Chi Minh City, 59C, Nguyen Dinh Chieu St., Dist. 3, Ho Chi Minh City, Vietnam}
	\email{kyhn@ueh.edu.vn}
	
	\author[I. Sim]{Inbo Sim}
	\address{Inbo Sim \newline
		Department of Mathematics, University of Ulsan, Ulsan 44610, Republic of Korea}
	\email{ibsim@ulsan.ac.kr}

	\subjclass[2020]{35B33, 35J20, 35J25, 35J62, 46E35}
	\keywords{Leray-Lions type operators; critical growth; concentration-compactness principle; variational methods}
	
	\begin{abstract}
		In this paper, we study the existence of multiple solutions to a generalized $p(\cdot)$-Laplace equation with two parameters involving critical growth. More precisely, we give sufficient ``local'' conditions, which mean that growths between the main operator and nonlinear term are locally assumed for the cases $p(\cdot)$-sublinear, $p(\cdot)$-superlinear, and sandwich-type. Compared to constant exponent problems (for examples, $p$-Laplacian and $(p,q)$-Laplacian), this characterizes the study of variable exponent problems. We show this by applying variants of the Mountain Pass Theorem for $p(\cdot)$-sublinear and $p(\cdot)$-superlinear  cases and constructing critical values defined by a minimax argument in the genus theory for sandwich-type case. Moreover, we also obtain a nontrivial nonnegative solution for sandwich-type case changing a role of parameters.  Our work is a generalization of several existing works in the literature.
	\end{abstract}
	
	\maketitle
	\numberwithin{equation}{section}
	\newtheorem{theorem}{Theorem}[section]
	\newtheorem{lemma}[theorem]{Lemma}
	\newtheorem{proposition}[theorem]{Proposition}
	\newtheorem{corollary}[theorem]{Corollary}
	\newtheorem{definition}[theorem]{Definition}
	\newtheorem{example}[theorem]{Example}
	\newtheorem{remark}[theorem]{Remark}
	\allowdisplaybreaks
	
	%########################## SECTION I. INTRODUCTION #############################% 
	\section{Introduction and Main Results}\label{Intro}
	In this paper we are concerned with the existence and multiplicity of solutions to the following problem:
	\begin{align}\label{Eq.General}
		\begin{cases}
			-\operatorname{div}a(x,\nabla u)=\lambda f(x,u)+\theta b(x)|u|^{t(x)-2}u \quad \text{in } \Omega,\\
			u=0\quad \text{on } \partial \Omega, 
		\end{cases}
	\end{align}
where $\Omega $ is a bounded domain in $\mathbb{R}^{N}$ with a Lipschitz boundary $\partial \Omega;$ $a : \Omega \times \mathbb R^N \to
\mathbb R^N$ is a Carath\'{e}odory function which generalizes  $|\xi| ^{p(x)-2}\xi$, where $p\in  C_+(\overline{\Omega})$ with
$$C_+(\overline{\Omega}) :=\{ r\in C(\overline{\Omega }): 1<r^-:=\min_{x\in \overline{\Omega }} r(x)\leq r^+:=\max_{x\in \overline{\Omega }} r(x)<\infty\};$$
 $f: \Omega \times \mathbb R \to
\mathbb R$ is a Carath\'{e}odory function having subcritical growth; $t\in C_+(\overline{\Omega})$ with $p^+<t^-\leq t(x)\leq p^\ast(x)$ for all $x\in\overline{\Omega}$ such that 
$$\mathcal{C}:=\{x\in\overline{\Omega}:\, t(x)=p^\ast(x)\}\ne\emptyset,$$
 where $p^\ast(x):=\frac{Np(x)}{N-p(x)}$ if $p^+<N$ and $p^\ast(x):=p_0(x)$ for any fixed $p_0\in C_+(\overline{\Omega})$ with $p(x)<p_0(x)$ for all $x\in\overline{\Omega}$ if $p^+\geq N$; $b\in L^\infty(\Omega)$ and $b(x)>0$ for a.e. $x\in\Omega;$ and $\lambda,\theta$ are parameters. 
 
 Throughout this paper, we always assume the following conditions on $a$:
	\begin{itemize}
		\item[$\textup{(A0)}$]
		$a(x,-\xi)=-a(x,\xi)$ for a.e. $x\in \Omega $ and all $\xi \in \mathbb R^N.$
		
		\item[$(\textup{A1})$] There exists a Carath\'{e}odory function $A : \Omega \times \mathbb R^N \to \mathbb R,$ continuously differentiable with respect to its second variable, such that $A(x,0)=0$ for a.e. $x\in \Omega$ and $a(x,\xi)=\nabla_\xi A(x,\xi)$ for a.e. $x\in \Omega $ and all $\xi \in \mathbb R^N.$
		
		\item[($\textup{A2}$)] $|a(x,\xi)| \leq C \left[1 + |\xi|^{p(x)-1}\right]$
		for a.e. $x\in \Omega $ and all $\xi \in \mathbb R^N ,$ where $C$ is a positive constant. %,and where $|\cdot|$ denotes the Euclidean norm.
		\item[($\textup{A3}$)] $|\xi|^{p(x)} \leq a(x,\xi)\cdot \xi\leq p(x) A(x,\xi)$  for a.e. $x\in \Omega $ and all $\xi \in \mathbb R^N$. 
		
		\item[($\textup{A4}$)] $0 < [a(x,\xi) - a(x,\eta)]\cdot (\xi-\eta)$ for a.e. $x\in \Omega $ and all\  $\xi, \eta \in \mathbb R^N$, $\xi \ne \eta.$
	\end{itemize}
The operator satisfying $(\textup{A0})-(\textup{A4})$ is of Leray-Lions type and typical examples are  $$\operatorname{div}\left(|\nabla u|^{p(x)-2}\nabla u\right)\  \text{and}\ \operatorname{div}\left[(1+|\nabla u|^2)^{(p(x)-2)/2}\nabla u\right],$$
which we call the $p(\cdot)$-Laplacian and the generalized mean curvature operator, respectively. Another remarkable example is 
$$\operatorname{div}\left[\sum_{i=1}^n |\nabla u|^{p_i(x)-2}\nabla u\right],$$
which satisfies $(\textup{A0})-(\textup{A4})$ with  $p(x) := \max_{1\le i \le n}  p_i(x),$ and we call this operator the multiple $p(\cdot)$-Laplacian. A special case of the multiple $p(\cdot)$-Laplacian is the operator $\operatorname{div}\left(|\nabla u|^{p-2}\nabla u+|\nabla u|^{q-2}\nabla u\right)$ with $1<q\leq p<\infty$, which is called the $(p,q)$-Laplacian and has received a great attention from mathematicians in the last decade.

\par
The study of $p(\cdot)$-Laplacian was initiated to describe some materials (for example, electrorheological fluids) that are inadequate to deal with in a constant $p(\cdot)$.  Indeed
R\v adulescu \cite{Radul} gave a couple of examples that required the setting of $p(\cdot)$-Laplacian problems that portray phenomenon strongly depending on the position. Thus, it is natural to ask for sufficient effects of the ration of $p(\cdot)$ and growth of a nonlinear term in a local sense.  It is worth mentioning that the set $\mathcal{C}$ is the other condition of locality. Meanwhile, a local effect was also assumed in this study when $p(\cdot)$ is even constant in the sequel papers  \cite{de Figueiredo.2003, de Figueiredo.2006, de Figueiredo.2009}. More precisely, the authors were concerned with the ``local'' growth of nonlinearities,  investigating the existence, nonexistence, and multiplicity of solutions for the family of problems
		\begin{align*}%\label{Eq.Figueiredo}
		\begin{cases}
			-\operatorname{div}\left(|\nabla u|^{p-2}\nabla u\right)=f_\lambda (x,u) \quad \text{in } \Omega,\\
			u>0 \quad \text{in } \Omega,\\
			u=0\quad \text{on } \partial \Omega, 
		\end{cases}
	\end{align*}
where $f$ is locally ``$p$-sublinear'' at 0 and is locally ``$p$-superlinear'' at $\infty$, namely (roughly speaking): 
$$\lim_{s\to 0^+}\frac{f_\lambda (x,s)}{s^{p-1}}=\infty$$
for $x$ in a subdomain $\Omega_1$ of $\Omega$ and 
$$\lim_{s\to \infty}\frac{f_\lambda (x,s)}{s^{p-1}}=\infty$$
for $x$ in a subdomain $\Omega_2$ of $\Omega$. We emphasize that such local assumptions are effective for a nonautonomous nonlinear term and does not allow higher or lower growth than $p$, respectively.
Recently,  Komiya-Kajikiya \cite{Kom-Kaj} studied the existence of infinitely many solutions to problem \eqref{Eq.General} when $a(x,\xi)=|\xi|^{p-2}\xi+|\xi|^{q-2}\xi$ with $1<q\leq p<\infty$ and $\theta=0$. Indeed, they obtained a sequence of solutions with $C^1$-norms converging to 0 (resp. $\infty$) when $f$ satisfies a locally $q$-sublinear condition, i.e., $\inf_{x\in B_\epsilon(x_0)} \frac{F(x,\tau)}{|\tau|^q}\to\infty$  as $\tau\to 0$ (resp. a locally $p$-superlinear condition, i.e., $\inf_{x\in B_\epsilon(x_0)} \frac{F(x,\tau)}{|\tau|^p}\to\infty$  as $|\tau|\to \infty$) for some ball $B_\epsilon(x_0)\subset\Omega$. Here and in what follows, $F(x,\tau):=\int_{0}^{\tau}f(x,s)\diff s$ and $B_\epsilon(x_0)$ denotes a ball in $\mathbb{R}^N$ centered at $x_0$ with radius $\epsilon.$ It is worth mentioning that they did not treat the case of growth between $p$ and $q$, which we call sandwich-type growth for the $(p,q)$-problem. These results have recently been obtained for problem \eqref{Eq.General} under $(\textup{A0})-(\textup{A4})$ in \cite{HHS}. Note that both \cite{Kom-Kaj} and \cite{HHS} only considered subcritical problems. The existence of infinitely many  solutions to critical problems involving the $p$-Laplacian for this kind of locally $p$-superlinear condition was investigated in \cite{Sil-Xav.2003}.

\par
Regarding critical and sandwich-type growth, in \cite{HS.AML21} we considered problem \eqref{Eq.General} of the form:
\begin{align}\label{Sand}
	\begin{cases}
		-\operatorname{div}\left(|\nabla u|^{p-2}\nabla u+|\nabla u|^{q-2}\nabla u\right)=\lambda m(x)|u|^{s-2}u+\theta b(x)|u|^{p^\ast-2}u \quad \text{in } \Omega,\\
		u=0\quad \text{on } \partial \Omega, 
	\end{cases}
\end{align}
where $1<q<s<p$ and the weight $m$ is possibly sign-changing. We obtained the existence of nontrivial nonnegative solutions to \cite{BBF.2021} for a fixed $\lambda$ and some range of parameter $\theta$ (depending on $\lambda$). In \cite{BBF.2021}, Baldelli et al. obtained the multiplicity of solutions for problem \eqref{Sand} when $\Omega=\R^N$ in a reverse way, that is, for $\theta$ fixed and small, they find some range of $\lambda$. The subcritical problem of sandwich-type growth involving variable exponent was studied by Mihailescu-R\v adulescu in \cite{Mih-Rad}. In that paper, the authors studied \eqref{Eq.General} of the form: 
\begin{align}\label{Eq.MR}
\begin{cases}
	-\operatorname{div}\left(|\nabla u| ^{p(x)-2}\nabla u+|\nabla u| ^{q(x)-2}\nabla u\right)=\lambda |u|^{s(x)-2}u \quad \text{in } \Omega,\\
	u=0\quad \text{on } \partial \Omega, 
\end{cases}
\end{align}
with $p,q,s\in C_+(\overline{\Omega})$ satisfying $q^+<s^-\leq s^+<p^-$ and $s^+<q^\ast(x)$ for all $x\in \overline{\Omega}$ and showed that there exists $\lambda_*>0$ such that problem~\eqref{Eq.MR} admits a nontrivial solution for any $\lambda\in [\lambda_*,\infty)$.

Motivated by \cite{BBF.2021,Figueredo2013, de Figueiredo.2003, de Figueiredo.2006, de Figueiredo.2009, Kom-Kaj, HS.AML21, Mih-Rad,Sil-Xav.2003}, our aim is to show the existence of %, nonexistence
  multiple solutions to problem \eqref{Eq.General}, which may include a critical term and generalize previous results, in particular \cite{HS.AML21, Mih-Rad, Sil-Xav.2003} when $f$ satisfies one of three types of growth conditions: locally $p(\cdot)$-sublinear, locally $p(\cdot)$-superlinear, and sandwich-type. Moreover,  we also obtain a nontrivial nonnegative solution for the sandwich-type case by changing the role of the parameters.
  We look for solutions to problem~\eqref{Eq.General} in $W_0^{1,p(\cdot)}(\Omega)$, that is the completion of $C_c^\infty(\Omega)$ with respect to the norm
  $$\|u\|:=\|\nabla u\|_{L^{p(\cdot)}(\Omega)}.$$
 Note that if $p^+<N$ and $p$ is logarithmic H\"older continuous (notation: $p\in C^{0, \frac{1}{|\log t|}}(\overline{\Omega})$), namely, 
 \begin{align*}
 	|p(x)-p(y)| \leq \frac{C}{|\log |x-y||}\quad\text{for all } x,y\in \overline{\Omega} \text{ with } 0<|x-y|<\frac{1}{2},
 \end{align*}
 then the following critical imbedding holds
 \begin{equation}\label{cri.imb}
 	W_0^{1,p(\cdot)}(\Omega)\hookrightarrow L^{p^*(\cdot)}(\Omega)
 \end{equation}
 (see Section~\ref{Pre} for the definitions and properties of the variable exponent Lebesgue-Sobolev spaces). Since we mainly focus on critical problems, throughout this paper we always assume $p\in C^{0, \frac{1}{|\log t|}}(\overline{\Omega})$	when $p^+<N$ in our main results.

Before giving a statement of the main results, we note  that under $(\textup{A1})$, we have that for a.e. $x\in \Omega$ and for all $\xi \in \mathbb R^N$,
	\begin{equation}\label{formA}
		A(x,\xi)=\int_{0}^{1}a(x,t\xi)\cdot \xi \diff t.
	\end{equation}
Thus, $(\textup{A2})$ yields
\begin{equation}\label{A2'}
	|A(x,\xi)| \leq \widetilde{C}\left[1+|\xi|^{p(x)}\right]\ \  \text{for a.e.}\ x\in \Omega\ \text{and all}\ \xi \in \mathbb R^N,
\end{equation}
where $\widetilde{C}$ is a positive constant. Also, from  $(\textup{A2})$ we get
\begin{equation}\label{A2''}
	|a(x,\xi)|^{\frac{p(x)}{p(x)-1}}\leq \bar{C} \left[1+|\xi|^{p(x)}\right]%\leq \bar{C} \left[1+a(x,\xi)\cdot \xi\right] 
\end{equation}
for a.e. $x\in \Omega$ and all $\xi \in \mathbb R^N$. 
  In what follows, for $r\in C(\overline{\Omega})$ with $r(x)\geq 1$ for all $x\in\Omega$ and an open set $D\subseteq\Omega$, denote
$$r_D^-:=\inf_{x\in D}r(x),\ r_D^+:=\sup_{x\in D}r(x)$$
and
$$L_+^{r(\cdot)}(D):=\{u\in L^{r(\cdot)}(D):\, u>0\ \text{a.e. in}\ D\}.$$ 

First, let us start with the locally $p(\cdot)$-sublinear case. In this case, we additionally assume for the operator $a$ that:
\begin{itemize}
	\item[($\textup{A5}$)] For each ball $B\subset \Omega$, there exist constants $C_B,\delta_B>0$, and $q_B\in [1,\infty)$ such that
	$$|a(x,\xi)| \leq C_B |\xi|^{q_B-1} \ \text{
		for a.e.} \ x\in B \ \text{and all} \ |\xi|<\delta_B.$$ 
\end{itemize}
It is clear that the multiple $p(\cdot)$-Laplacian and the generalized mean curvature operator above also satisfy ($\textup{A5}$). Note that $(\textup{A3})$ and $(\textup{A5})$ yield $q_B\leq p_B^-$ and moreover, from $(\textup{A5})$ and \eqref{formA} we obtain
\begin{equation}\label{A5'}
	|A(x,\xi)| \leq \frac{C_B}{q_B} |\xi|^{q_B}\  \text{for a.e.} \ x\in B \ \text{and all}\ |\xi|<\delta_B.
\end{equation} 
Assume that $f$ satisfies the following conditions.
 \begin{itemize}
 	\item [$\textup{(F1)}$] $f:\, \Omega\times [-\epsilon_0,\epsilon_0]\to\mathbb{R}$ is a Carath\'{e}odory function with an $\epsilon_0>0$ such that $f$ is odd with respect to the second variable and $d(\cdot) := \sup_{|\tau|\leq\epsilon_0}\, |f(\cdot,\tau)|\in L_+^{\sigma(\cdot)}(\Omega)$ with $\sigma\in C_+(\overline{\Omega})$ satisfying $\sigma(x)>\max\left\{1,\frac{N}{p(x)}\right\}$ for all $x\in \overline{\Omega}.$
 	\item [$\textup{(F2)}$] There exist a ball $B\subset\Omega$ and $m\in L_+^{1}(B)$ such that 
 	$$\lim_{\tau\to 0}\frac{F(x,\tau)}{m(x)|\tau|^{q_{B}}}=\infty\ \ \text{uniformly for a.e.}\ x\in B,$$
 	where $q_{B}$ is determined as in $(\textup{A5})$.
 	 \end{itemize}
  The existence result for the locally $p(\cdot)$-sublinear case is stated as follows. 
\begin{theorem}\label{Theo.Sublinear}
	Let $(\textup{A0})-(\textup{A5}),  \textup{(F1)}$ and $\textup{(F2)}$ hold. Then, for any $\lambda>0$ and $\theta\geq 0$, problem \eqref{Eq.General} admits a sequence of nontrivial solutions $\{u_n\}_{n=1}^\infty$ converging to zero in $W^{1,p(\cdot)}_0(\Omega)\cap L^\infty(\Omega)$. %{\color{red} Furthermore, if we additionally assume that $m\in L^\infty(\Omega)$, then $\|u_n\|_{ C^1(\overline{\Omega})}\to 0.$}
\end{theorem}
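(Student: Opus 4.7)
The strategy is to apply a Clark-type (Kajikiya) variant of the symmetric Mountain Pass Theorem to a truncated functional, and then upgrade the resulting critical points to genuine solutions of \eqref{Eq.General} via an $L^\infty$ regularity argument. Since $f$ is only defined on $\Omega\times[-\epsilon_0,\epsilon_0]$ and the critical term $b(x)|u|^{t(x)-2}u$ precludes a good global variational setting, I would first introduce an odd truncation: extend $f$ by $\tilde f(x,\tau):=f\bigl(x,\mathrm{sgn}(\tau)\min\{|\tau|,\epsilon_0\}\bigr)$, and replace $b(x)|u|^{t(x)-2}u$ by an odd Carath\'eodory function $h(x,u)$ that agrees with it for $|u|\le\epsilon_0$ and is uniformly bounded for $|u|>\epsilon_0$. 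Writing $\tilde F$ and $H$ for the corresponding primitives, the modified functional
$$\tilde I(u)=\int_\Omega A(x,\nabla u)\,dx-\lambda\int_\Omega\tilde F(x,u)\,dx-\theta\int_\Omega H(x,u)\,dx$$
is then $C^1$ on $W_0^{1,p(\cdot)}(\Omega)$, even by \textup{(A0)} and \textup{(F1)}, bounded below by the coercivity from \textup{(A3)} combined with the global boundedness of $\tilde f$ and $h$, and satisfies the Palais--Smale condition thanks to the subcritical character of the truncated right-hand side together with the monotonicity \textup{(A4)} and the growth bound \eqref{A2''}.

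The central task is the geometric hypothesis of Clark's theorem: for every $k\in\N$ one must exhibit a $k$-dimensional subspace $V_k\subset W_0^{1,p(\cdot)}(\Omega)$ and some $\rho_k>0$ with $\sup\{\tilde I(u):u\in V_k,\ \|u\|=\rho_k\}<0$. I would take $V_k\subset C_c^\infty(B)$ of dimension $k$, where $B$ is the ball from \textup{(F2)}, and exploit the equivalence of norms on a finite-dimensional space. For $u=t\phi$ with $\phi$ on the unit sphere of $V_k$ and $t>0$ small, the estimate \eqref{A5'} gives
$$\int_\Omega A(x,\nabla u)\,dx\le \tfrac{C_B}{q_B}\,t^{q_B}\int_B|\nabla\phi|^{q_B}\,dx,$$
while \textup{(F2)}, applied uniformly on the compact unit sphere of $V_k$, yields $\tilde F(x,t\phi)\ge K\,m(x)\,t^{q_B}|\phi|^{q_B}$ for an arbitrarily large $K$ once $t$ is small. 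Since $t(\cdot)\ge t^->p^+\ge q_B$, the truncated critical term $H(x,t\phi)$ is of strictly higher order in $t$. Choosing $K$ large enough, and then $t$ small enough, makes $\tilde I(t\phi)<0$ uniformly on the unit sphere of $V_k$, which provides the required radius $\rho_k$.

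Kajikiya's version of Clark's theorem then produces a sequence of critical points $\{u_n\}$ of $\tilde I$ with $\tilde I(u_n)\to 0^-$ and $u_n\to 0$ in $W_0^{1,p(\cdot)}(\Omega)$. To remove the truncation and upgrade to $L^\infty$ convergence, I would invoke a Moser-type iteration, valid under \textup{(A2)}, \textup{(A3)} and the bounded truncated right-hand side, to obtain $\|u_n\|_{L^\infty(\Omega)}\le \Phi(\|u_n\|)$ with $\Phi(r)\to 0$ as $r\to 0$. For $n$ sufficiently large one then has $\|u_n\|_{L^\infty}<\epsilon_0$, so both truncations are inoperative on the support of $u_n$ and $u_n$ solves \eqref{Eq.General}, with simultaneous convergence in $W_0^{1,p(\cdot)}(\Omega)\cap L^\infty(\Omega)$. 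The main obstacle I expect is the geometric step: balancing the local gradient bound \textup{(A5)} against the local sublinear condition \textup{(F2)} so that the critical perturbation remains of negligible order, which is precisely the role of the compatibility $q_B\le p_B^-$ implicit in \textup{(A5)} combined with \textup{(F2)}.
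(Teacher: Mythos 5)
Your proposal follows essentially the same route as the paper: truncate, apply Kajikiya's symmetric (Clark-type) critical point theorem using the local small-norm geometry supplied by \textup{(A5')} and \textup{(F2)} on a finite-dimensional subspace supported in $B$, and then use an $L^\infty$ a-priori bound to show the truncations are inoperative for $n$ large and the $u_n$ solve the original problem with $\|u_n\|_{L^\infty}\to 0$. The paper streamlines this by absorbing the critical term into $\widetilde f(x,u):=\lambda f(x,u)+\theta b(x)|u|^{t(x)-2}u$ and observing that, since $b\in L^\infty$ and $t^->p^+\ge q_B$, this $\widetilde f$ still satisfies \textup{(F1)} and \textup{(F2)} (both are purely local conditions near $\tau=0$), so the result reduces directly to the subcritical theorem of \cite{HHS}, with the $L^\infty$ bound drawn from \cite{HNT}; your separate truncation of the critical term reconstructs that argument explicitly.
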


\par
Next, we study problem \eqref{Eq.General} with a nonlinearity $f$ of locally $p(\cdot)$-superlinear type growth. In this case, we need the following additional assumption on the exponent $p$:
 \begin{itemize}
	\item [$\textup{(P)}$] There exists $w\in L_+^{\frac{p^*(\cdot)}{p^*(\cdot)-p(\cdot)}}(\Omega)$ such that
	\begin{equation}\label{Cond}
	\mu_1:=\inf_{\varphi\in C_c^\infty(\Omega)} \frac{\int_\Omega|\nabla \varphi|^{p(x)}\diff x}{\int_\Omega w(x)|\varphi|^{p(x)}\diff x} \in (0,\infty).
	\end{equation}
\end{itemize}
Obviously, condition $\textup{(P)}$ automatically holds true when $p(\cdot)$ is constant due to the H\"older and Sobolev inequalities. For variable exponent case, $\textup{(P)}$ is fulfilled if there is a vector $x_0\in \mathbb{R}^N\setminus\{0\}$ such that for any $x\in \Omega$, $h(\tau):=p(x + \tau x_0)$ is monotone for $\tau\in I_x= \{\tau:\, x + \tau x_0 \in \Omega\}$ (by taking $w\in L_+^\infty(\Omega)$), see  \cite[Theorems 3.2-3.3]{Fan-Zhang-Zhao.2005}. It is worth pointing out that $\mu_1$ given by \eqref{Cond} is zero if there is an open ball $B_{\epsilon_0}(x_0)\subset\Omega$ such that $p(x_0)<$ (or $>$) $p(x)$ for all $x\in\partial B_{\epsilon_0}(x_0)$, see \cite[Proof of Theorem 3.1]{Fan-Zhang-Zhao.2005}.

Now, we state conditions on $f$ for the locally $p(\cdot)$-superlinear case.
	\begin{itemize}
		\item [$\textup{(F3)}$] $f:\, \Omega\times\mathbb{R}\to\mathbb{R}$ is a Carath\'{e}odory function such that $f$ is odd with respect to the second variable.
	
		\item [$\textup{(F4)}$] There exist functions $r_j, a_j$ with $r_j \in C_+(\overline{\Omega})$, $\underset{x\in\overline{\Omega}}{\inf}[t(x)-r_j(x)]>0,$   $a_j\in L_+^{\frac{t(\cdot)}{t(\cdot)-r_j(\cdot)}}\left(b^{-\frac{r_j}{t-r_j}},\Omega\right)$ ($j=1,\cdots,m_0$), and $\underset{1\leq j\leq m_0}{\max}\, r_j^+>p^-$ such that
		$$|f(x,\tau)|\leq \sum_{j=1}^{m_0} a_j(x)|\tau|^{r_j(x)-1}\ \  \text{for a.e.}\ x\in\Omega \ \text{and all}\ \tau\in\mathbb{R}.$$
			\item [$\textup{(F5)}$] There exist an open ball $B\subset\Omega$ and a function $m\in L_+^{1}(B)$ such that  %$\underset{|t|\leq T}{\sup}\, |F(\cdot,t)|\in L^1(B)$ for each $T>0$, and 
			$$\lim_{|\tau|\to \infty}\frac{F(x,\tau)}{m(x)|\tau|^{p_B^+}}=\infty\  \text{uniformly for a.e.} \ x\in B.$$

		\item [$\textup{(F6)}$] For $w$ given by $\textup{(P)}$, there exist $\alpha\in [p^+,t^-)$ and $e\in L^1_+(\Omega)$ such that $$\alpha F(x,\tau)-\tau f(x,\tau)\leq\frac{\alpha-p^+}{p^+}w(x)|\tau|^{p(x)}+e(x)\  \text{for a.e.}\ x\in\Omega\ \text{and all}\ \tau\in\mathbb{R}.$$
	\end{itemize}
The existence result for the locally $p(\cdot)$-superlinear case is stated as follows. 
\begin{theorem}\label{Theo.Superlinear}
Let %$p\in C^{0, \frac{1}{|\log t|}}(\overline{\Omega})$ and let 
$\textup{(P)}, (\textup{A0})-(\textup{A4})$ and $\textup{(F3)}-\textup{(F6)}$ hold. For a given $\lambda\in (0,\mu_1]$, there exists a sequence $\{\theta_n\}_{n=1}^\infty$ with $0<\theta_{n+1}<\theta_n$ for all $n\in\mathbb{N}$ such that for any $\theta\in (\theta_{n+1},\theta_n)$, problem \eqref{Eq.General} admits at least $n$ distinct pairs of nontrivial solutions.
\end{theorem}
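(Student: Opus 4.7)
The plan is to apply a symmetric minimax theorem of Krasnoselskii-genus type to the even $C^1$ functional
\[
J_{\la,\theta}(u) := \int_\Omega A(x,\nabla u)\diff x - \la \int_\Omega F(x,u)\diff x - \theta \int_\Omega \frac{b(x)}{t(x)}|u|^{t(x)}\diff x
\]
on $W_0^{1,p(\cdot)}(\Omega)$; its evenness follows from \textup{(A0)} and \textup{(F3)}, and it is $C^1$ by standard arguments using \textup{(A1)}--\textup{(A2)}, \textup{(F4)} and the critical embedding \eqref{cri.imb}. The strategy has three ingredients: (i) a local $(PS)_c$ condition for $c$ below a critical threshold $c^*(\theta)$ that tends to infinity as $\theta\to 0^+$; (ii) a symmetric mountain-pass geometry producing an ordered family of genus-minimax values $c_1\le c_2\le\cdots$; (iii) a parameter-selection argument that pushes $c^*(\theta)$ above the first $n$ of these levels.

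For (i), I would first establish boundedness of any $(PS)_c$-sequence $\{u_n\}$ by computing $\alpha J_{\la,\theta}(u_n)-\langle J'_{\la,\theta}(u_n),u_n\rangle$ and combining \textup{(A3)}, \textup{(F6)}, the assumption $\la\le\mu_1$, and \textup{(P)}, so that the $w$-weighted remainder is exactly absorbed. To handle the critical term, I would invoke a concentration-compactness principle of Lions type for variable-exponent Sobolev spaces to extract a finite set of concentration points which are necessarily located in $\mathcal{C}$, each carrying a mass bounded below by a constant of order $(\theta\,b(x))^{-(p^*(x)-p(x))/p^*(x)}$; at levels $c$ strictly below
\[
c^*(\theta) := \frac{1}{N}\inf_{x\in\mathcal{C}}\frac{S(x)^{N/p(x)}}{(\theta\,b(x))^{(N-p(x))/p(x)}},
\]
all such masses must vanish, and \textup{(A4)} upgrades the resulting weak convergence to strong convergence in $W_0^{1,p(\cdot)}(\Omega)$. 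Note $c^*(\theta)\to\infty$ as $\theta\to 0^+$.

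For (ii), \textup{(A3)}, \textup{(F4)} and the subcritical Sobolev embeddings yield $\rho,\beta>0$ with $J_{\la,\theta}(u)\ge\beta$ on $\|u\|=\rho$. For each $k\in\N$, fix a $k$-dimensional subspace $E_k\subset C_c^\infty(B)$ (with $B$ the ball from \textup{(F5)}); equivalence of norms on $E_k$ combined with \textup{(F5)} and \eqref{A2'} gives a radius $R_k>\rho$ outside of which $J_{\la,\theta}\le 0$ on $E_k$, together with $M_k:=\sup_{E_k}J_{\la,\theta}<\infty$, the latter being uniform for $\theta$ in any bounded interval. Setting
\[
c_k := \inf_{K\in\Gamma_k}\sup_{u\in K}J_{\la,\theta}(u),\qquad \Gamma_k := \{K\subset W_0^{1,p(\cdot)}(\Omega)\setminus\{0\}:\, K\text{ compact, symmetric},\, \gamma(K)\ge k\},
\]
intersection properties of genus force any admissible $K\in\Gamma_k$ (for $k\le\dim E_k$) to meet $\{\|u\|=\rho\}$, whence $0<\beta\le c_1\le c_2\le\cdots\le c_n\le M_n$.

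Finally, for (iii), since $M_n$ is independent of $\theta$ while $c^*(\theta)\to\infty$, one may define a strictly decreasing sequence $\{\theta_n\}\subset(0,\infty)$ by requiring $M_n<c^*(\theta)$ for all $\theta\in(0,\theta_n]$. For $\theta\in(\theta_{n+1},\theta_n)$, all of $c_1,\dotsc,c_n$ lie below $c^*(\theta)$, so the local $(PS)_{c_k}$ condition holds at each level; applying the standard deformation lemma together with the genus multiplicity clause (if $c_j=\cdots=c_{j+\ell}$, the corresponding critical set has Krasnoselskii genus at least $\ell+1$) yields at least $n$ distinct pairs of nontrivial critical points of $J_{\la,\theta}$. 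The most delicate step is \emph{(i)}: establishing the concentration-compactness-based local $(PS)_c$ using only the abstract Leray--Lions hypotheses \textup{(A0)}--\textup{(A4)} rather than an explicit form of $a$, and tracking the precise dependence of $c^*(\theta)$ on $\theta$ so that its blow-up as $\theta\to 0^+$ can be exploited to produce arbitrarily many solutions.
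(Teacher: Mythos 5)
Your overall three-part plan --- a local $(PS)_c$ condition below a $\theta$-dependent threshold established by concentration-compactness, a symmetric multi-dimensional geometry, and a parameter selection exploiting the blow-up of the threshold as $\theta\to 0^+$ --- is exactly the architecture of the paper's proof, which rests on the abstract result of Silva and Xavier recalled as Proposition~\ref{prop.abs}. The boundedness computation in step (i) (using $I-\tfrac{1}{\alpha}\langle I',\cdot\rangle$, \textup{(A3)}, \textup{(F6)}, \textup{(P)} and $\lambda\le\mu_1$) and the concentration-compactness elimination of concentration points both match Lemma~\ref{le.ps1}, modulo the form of the threshold (the paper's threshold \eqref{PS_c1} carries the shift $-\tfrac{\lambda\|e\|_{L^1(\Omega)}}{\alpha}$ coming from \textup{(F6)}, uses the single global constant $S_b$ rather than a pointwise $S(x)$ --- note $b\in L^\infty(\Omega)$ has no meaningful pointwise values --- and keeps the $\min\{\theta^{-1/(h^+-1)},\theta^{-1/(h^--1)}\}$ necessary for variable $t/p$).

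The substantive gap is in your step (ii). Under \textup{(F4)} the only restriction on the $r_j$ is $\max_j r_j^+>p^-$; some $r_j$ may satisfy $r_j^-\le p^+$, in which case the estimate on a small sphere gives, for $\|u\|<1$,
\begin{align*}
J_{\lambda,\theta}(u)\ \ge\ \tfrac{1}{p^+}\|u\|^{p^+}-C\lambda\sum_j\|u\|^{r_j^-}-C\theta\|u\|^{t^-},
\end{align*}
and the lower-order term $\|u\|^{r_j^-}$ cannot be absorbed when $r_j^-\le p^+$. So the claimed uniform bound ``$J_{\lambda,\theta}(u)\ge\beta$ on $\|u\|=\rho$'' does \emph{not} follow from \textup{(A3)}, \textup{(F4)} and the subcritical embeddings. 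The paper sidesteps this by establishing the mountain-pass inequality only on $\partial B_{\rho_{k_0}}\cap W_{k_0}$ for a subspace $W_{k_0}$ of finite codimension $k_0$, using a Schauder basis and the vanishing constants $\delta_k$ of Lemma~\ref{PL.Super.delta_k} to make the coefficient of $\|u\|^{r^*}$ arbitrarily small (see \eqref{PT.super1.delta_k}--\eqref{PT.Super.est2I}); this is precisely the decomposition $V\oplus W$ required in Proposition~\ref{prop.abs}. Relatedly, even if the sphere geometry held globally, your minimax values $c_k:=\inf_{K\in\Gamma_k}\sup_K J_{\lambda,\theta}$ over \emph{all} compact symmetric sets of genus $\ge k$ are not bounded below by $\beta$: one may take $K=\partial B_\epsilon\cap E_k$ with $\epsilon\to 0$, which has genus $k$ but on which $\sup J_{\lambda,\theta}\to 0$. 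To get positive linking levels one needs either the Rabinowitz constrained minimax classes or, as the paper does, the Silva--Xavier theorem which counts $\dim\widetilde{X}-\dim V$ pairs of critical points with levels in $[\beta,L]$. Your plan as written conflates the free genus minimax (appropriate for sublinear/sandwich problems like Theorem~\ref{Theo.Sandwich-infty}) with the linking setting required here.
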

%\begin{remark}\rm
%	It is worth pointing out that Theorems~\ref{Theo.Sublinear} and \ref{Theo.Superlinear} remain valid when $p=q$, i.e., the $p(\cdot)$-Laplacian case.
%\end{remark}

\par
In the last part, we study problem~\eqref{Eq.General} with the nonlinearity $f$ of locally sandwich-type growth. Precisely, we consider problem~\eqref{Eq.General} of the form:
	\begin{align}\label{Eq.Sandwich}
\begin{cases}
-\operatorname{div}a(x,\nabla u)=\lambda m(x)|u|^{s(x)-2}u+\theta b(x)|u|^{t(x)-2}u \quad \text{in } \Omega,\\
u=0\quad \text{on } \partial \Omega, 
\end{cases}
\end{align}
where $a$ satisfies $(\textup{A0})-(\textup{A5})$ and the following assumptions hold:
\begin{itemize}
	\item [$\textup{(S)}$] $s\in C_+(\overline{\Omega})$ with $s^+<p^-$ and $\left(\frac{t}{p}\right)^+<\left(\frac{t}{s}\right)^-$.
	\item [$\textup{(W)}$] $m\in L^{\frac{t(\cdot)}{t(\cdot)-s(\cdot)}}\left(b^{-\frac{s}{t-s}},\Omega\right)$.
	\end{itemize}	
Furthermore, the subcritical nonlinearity satisfies the following local condition:
\begin{itemize}
		\item [$\textup{(L1)}$] There exists a ball $B\subset\Omega$ such that $q_B<s_B^-$ and $\operatorname{meas}\, \{x\in B:\, m(x)>0 \}>0$, where $q_B$ is defined by $B$ as in $(\textup{A5})$.
\end{itemize}
Note that under $\textup{(L1)}$, from \eqref{A2'} and \eqref{A5'} we find positive constants $C_B$ and $C_p$ such that
\begin{equation}\label{A6}
	|A(x,\xi)|\leq C_B|\xi|^{q_B}+C_p|\xi|^{p_B^+}\quad \text{for a.e. } x\in B\ \text{and for all } \xi\in\R^N.
\end{equation}
%Indeed, \eqref{A6} clearly holds for $|\xi|<\delta_B$. It is easy to verify for $\delta_B\leq |\xi|\leq 1$ and $|\xi|\geq 1$ with a suitable constant $C_p$.
Let us start to study the sandwich case by investigating the multiplicity of solutions to problem~\eqref{Eq.Sandwich}. The following multiple existence result is a counterpart of \cite[Theorem 1]{BBF.2021} for the bounded domain case involving Leray-Lions type operators.
\begin{theorem}\label{Theo.Sandwich-infty} 
	Let $(\textup{A0})-(\textup{A5})$, $\textup{(S)}$, $\textup{(W)}$ and $\textup{(L1)}$ hold. Assume furthermore that $m(x)>0$ for a.e. $x\in B$. Then there exists $\{\theta_k\}_{k=1}^\infty$ with $0<\theta_k<\theta_{k+1}$ for all $k\in\N$ such that for each $k\in\N$, for $\theta\in (0,\theta_k)$ given, there exist $\lambda_\star,\lambda^\star>0$  with $\lambda_\star<\lambda^\star$ such that for any $\lambda\in(\lambda_\star,\lambda^\star)$, problem \eqref{Eq.Sandwich} admits at least $k$ pairs of distinct solutions with negative energy. 
\end{theorem}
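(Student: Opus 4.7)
The natural variational setup is to look for critical points of
$$\Phi_{\lambda,\theta}(u)=\int_\Omega A(x,\nabla u)\diff x-\lambda\int_\Omega\frac{m(x)}{s(x)}|u|^{s(x)}\diff x-\theta\int_\Omega\frac{b(x)}{t(x)}|u|^{t(x)}\diff x$$
on $W_0^{1,p(\cdot)}(\Omega)$, which is even by $(\textup{A0})$ and $C^1$ under $(\textup{A1})$--$(\textup{A2})$, $\textup{(S)}$, $\textup{(W)}$. Since the $t(\cdot)$-term is superlinear in $p$ (and critical on $\mathcal{C}$), $\Phi_{\lambda,\theta}$ is unbounded below; I would therefore work with a \emph{truncation} $\widetilde\Phi_{\lambda,\theta}$ that coincides with $\Phi_{\lambda,\theta}$ on $\{\|u\|\le\rho_0\}$ and is globally bounded from below and coercive. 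The approach is then Krasnoselskii genus / Clark-type minimax applied to $\widetilde\Phi_{\lambda,\theta}$; the even symmetry, together with the negativity of the minimax values, will yield the required pairs of critical points.

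The plan has four steps. \textbf{Step 1 (Truncation).} Using $(\textup{A3})$, the variable-exponent Hölder inequality with the weight condition $\textup{(W)}$, and the gap $s^+<p^-<(t/p)^+ p^-\le (t/s)^-p^-$ from $\textup{(S)}$, estimate $\Phi_{\lambda,\theta}(u)$ from below in terms of $\|u\|$ and $\theta\int b|u|^{t(x)}$; then insert a smooth cut-off $\tau(\|u\|/\rho_0)$ in front of the $\theta$-term. This yields a truncated functional that is bounded below, is equal to $\Phi_{\lambda,\theta}$ on $\{\|u\|\le\rho_0/2\}$, and whose critical points in this inner ball are genuine critical points of $\Phi_{\lambda,\theta}$. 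The truncation radius $\rho_0$ must be taken of order $\theta^{-1/(t^--p^+)}$ (up to lower-order corrections), which is the origin of the $\theta$-dependence of the ``negative-energy zone''. \textbf{Step 2 (Local Palais--Smale).} Show that $\widetilde\Phi_{\lambda,\theta}$ satisfies $(PS)_c$ for every $c<0$. Boundedness of $(PS)_c$ sequences is automatic from the coercivity in Step 1; for strong convergence I would use the $S^+$-property of the Leray--Lions operator ($(\textup{A4})$), applying the variable-exponent concentration-compactness principle to handle the critical set $\mathcal{C}$. The key point is that any nontrivial concentrated bubble would carry energy bounded below by a positive constant (the best variable-exponent Sobolev constant on $\mathcal{C}$ times a combinatorial factor), contradicting $c<0$; hence concentration is ruled out for free in the negative-energy regime, and no explicit threshold needs to be tracked.

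\textbf{Step 3 (Negative minimax values).} For each $k\ge 1$ pick $k$ linearly independent functions $\varphi_1,\ldots,\varphi_k\in C_c^\infty(B)$ supported in the ball $B$ from $\textup{(L1)}$ and set $E_k:=\operatorname{span}\{\varphi_1,\ldots,\varphi_k\}$. Using \eqref{A6}, for $v\in E_k$ with $\|v\|\le 1$ and $0<\eta$ small,
$$\widetilde\Phi_{\lambda,\theta}(\eta v)\le C_1\eta^{q_B}+C_2\eta^{p_B^+}-C_3(v)\lambda\eta^{s_B^+}+O(\theta\eta^{t_B^-}),$$
where $C_3(v)>0$ since $m>0$ a.e.\ in $B$. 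By $\textup{(L1)}$ and $\textup{(S)}$ we have $q_B<s_B^-\le s_B^+<p_B^+<t_B^-$, so for each $k$ we can pick $\eta_k>0$ small and a range $\lambda_\star(k)<\lambda<\lambda^\star(k)$ so that $\sup_{v\in E_k\cap\partial B_{\eta_k}}\widetilde\Phi_{\lambda,\theta}(\eta v)<0$. Defining
$$c_j:=\inf_{K\in\Gamma_j}\sup_{u\in K}\widetilde\Phi_{\lambda,\theta}(u),\qquad \Gamma_j:=\{K\subset W_0^{1,p(\cdot)}(\Omega)\setminus\{0\}:\,K=-K\text{ closed, }\gamma(K)\ge j\},$$
one obtains $-\infty<c_1\le\cdots\le c_k<0$. \textbf{Step 4 (Conclusion).} The standard deformation lemma combined with Krasnoselskii's genus, Step 2 and Step 3 gives that each $c_j$ is a critical value, with multiplicity counted by genus when equalities occur, producing at least $k$ pairs of nontrivial solutions of $\widetilde\Phi_{\lambda,\theta}$. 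Since the negativity forces these to lie in the inner ball $\{\|u\|\le\rho_0/2\}$, they solve \eqref{Eq.Sandwich}. Finally, define $\theta_k$ inductively as the supremum of those $\theta>0$ for which the whole construction can be carried out for all $j\le k$; monotonicity $\theta_k<\theta_{k+1}$ follows because a larger truncation $\rho_0$ (smaller $\theta$) allows fitting higher-dimensional subspaces $E_k$.

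The main obstacle I anticipate is the simultaneous calibration of $\rho_0$, $\lambda_\star,\lambda^\star$, and $\theta_k$: one needs the truncation inactive at the minimax levels (so that critical points of $\widetilde\Phi$ are critical points of $\Phi$), the subspace $E_k$ to sit inside $\{\|u\|\le\rho_0/2\}$ with $\sup\widetilde\Phi_{\lambda,\theta}<0$, and the $(PS)_c$ argument in Step 2 to be insensitive to $k$. The interaction between the variable exponent structure on $\mathcal{C}$ and the weight condition $\textup{(W)}$ makes the Hölder bookkeeping delicate, but the fact that negative levels lie strictly below any concentration threshold---a feature built into the sandwich geometry---is what ultimately makes the scheme robust.
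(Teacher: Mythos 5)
Your overall architecture — truncating $\Psi$ so that negative levels lie in a fixed ball and then running a Krasnoselskii-genus minimax on those negative levels — matches the paper's proof (Lemmas~\ref{le.sandwich.PS}, \ref{T_lambda(u)<0}, \ref{Le.c_k<0} together give exactly the structure of your Steps 1, 3, 4). The genuine gap is in Step 2: the claim that $(\textup{PS})_c$ holds ``for free'' for every $c<0$, so that ``no explicit threshold needs to be tracked,'' is incorrect, and it is precisely this point that produces the upper bound $\lambda<\lambda^\star$ in the theorem (which you instead attribute, inaccurately, to the geometry in Step 3).

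Here is why the negativity of $c$ alone does not kill the concentration alternative. Let $\{u_n\}$ be a $(\textup{PS})_c$ sequence with $c<0$ admitting a concentration point $x_i\in\mathcal{C}$ of mass $\nu_i>0$. Testing $J(u_n)-\tfrac{1}{p^+}\langle J'(u_n),u_n\rangle$ and passing to the limit yields, exactly as in the paper's Lemma~\ref{le.sandwich.PS},
\begin{equation*}
c\ \geq\ -|\lambda|\left(\frac{1}{s^-}-\frac{1}{p^+}\right)\int_\Omega|m(x)|\,|u|^{s(x)}\diff x
\;+\;\theta\left(\frac{1}{p^+}-\frac{1}{t^-}\right)\left(\int_\Omega b(x)|u|^{t(x)}\diff x+\nu_i\right).
\end{equation*}
The bubble does contribute a positive term $\theta(\ldots)\nu_i\ge(\ldots)S_b^N\min\{\theta^{-1/(h^+-1)},\theta^{-1/(h^--1)}\}$, but the first term on the right is \emph{negative}, scales linearly in $\lambda$, and depends on the weak limit $u$; one cannot conclude $c\ge0$ and derive a contradiction without comparing these two competing contributions. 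The paper does this comparison by bounding $\int|m|\,|u|^{s(x)}\diff x$ via H\"older and optimizing with Young's inequality, arriving at a $(\textup{PS})_c$ threshold of the form $k(\theta)-K\max\{\theta^{-1/(\ell^+-1)},\theta^{-1/(\ell^--1)}\}\max\{|\lambda|^{\ell^+/(\ell^+-1)},|\lambda|^{\ell^-/(\ell^--1)}\}$, which is positive precisely when $\lambda<\bar\lambda_0(\theta)$. Your $\lambda^\star$ must therefore encode the constraint $\lambda<\bar\lambda_0(\theta)$ (in the paper, $\lambda^\star(\theta)=\min\{\lambda_0(\theta),\bar\lambda_0(\theta)\}$ where $\lambda_0(\theta)$ comes from the truncation); it cannot be extracted from Step 3, where making $\sup_{E_k\cap\partial B_{\eta_k}}\widetilde\Phi<0$ only forces $\lambda$ \emph{large}, i.e., only produces the lower bound $\lambda_\star$. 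Without this $\lambda$--$\theta$ calibration in the $(\textup{PS})$ argument, the minimax values need not be critical values and the proposal does not close. (A minor but related point: the cascade $\theta_k\to0$ arises because the lower bound $\lambda_k$ in Lemma~\ref{Le.c_k<0} increases with $k$ while $\lambda$ must stay below $\lambda^\star(\theta)$, which in turn shrinks as $\theta$ grows — again a consequence of the $(\textup{PS})$ threshold, not of ``fitting $E_k$ into the truncation ball.'')
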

 It is worth pointing out that \cite[Theorem 1]{BBF.2021} actually asserted the existence of a sequence of solutions provided the $L^\infty$ norm of $K(x):=\theta b(x)$ is  sufficiently small. However, we found that their argument can only produce a finite number of solutions in the same way as the statement of Theorem~\ref{Theo.Sandwich-infty} and discussed it with the authors of \cite{BBF.2021} about this issue, please see Remark~\ref{compareBBF} for more details.

 %It is worth pointing out that  in \cite[Theorem 1]{BBF.2021} the authors obtained a sequence of solutions if the $L^\infty$ norm of $K(x):=\theta b(x)$ is  sufficiently small but in their argument, the range of $\|K\|_{L^\infty}$ depends on the finitely dimensional subspaces that are utilized to show the negativeness of the critical values of the associated energy functional (see \cite[Eq. (70)]{BBF.2021}). Actually, such a range of $\|K\|_{L^\infty}$ only guarantees a finite number of solutions, please see Remark~\ref{compareBBF} for more details on this issue.
 
 \par
 Finally, we investigate a nontrivial nonnegative solution to problem~\eqref{Eq.Sandwich} changing a role of parameters. For this purpose, in addition to $\textup{(L1)}$ we assume that	
	 \begin{itemize}
		\item [$\textup{(L2)}$] There exists $\phi\in C_c^\infty(B)$ such that  $\int_B \frac{m(x)}{s(x)}\phi_+^{s(x)}\diff x>0$ with $\phi_+:= \max\{\phi,0\}$ and for each $\tau>0$, there exists $s_B(\tau)\in [s_B^-,s_B^+]$ such that
		$$\int_B \frac{m(x)}{s(x)}\phi_+^{s(x)}\tau^{s(x)}\diff x=\tau^{s_B(\tau)}\int_B \frac{m(x)}{s(x)}\phi_+^{s(x)}\diff x.$$ 
	\end{itemize} 
 Clearly, $\textup{(L2)}$ automatically holds true if either $m(x)>0$ a.e. in $B$ or  $s(\cdot)$ is constant in $B$ and $\operatorname{meas}\, \{x\in B:\, m(x)>0 \}>0$ (see e.g., \cite[Proposition 4.2]{Kawohl}).  	
Denote by $\mathcal{A}$ the set of all $\phi\in C_c^\infty(B)$ satisfying $\textup{(L2)}$ and define
\begin{equation}\label{lamda*}
\lambda_\star:=\inf_{\phi\in\mathcal{A}}\, \max_{\eta\in [s_B^-,s_B^+]}\, \frac{C^*(\eta)}{\int_B \frac{m(x)}{s(x)}\phi_+^{s(x)}\diff x}\left(\int_B |\nabla \phi|^{p_B^+}\diff x\right)^{\frac{\eta-q_B}{p_B^+-q_B}}\left(\int_B |\nabla \phi|^{q_B}\diff x\right)^{\frac{p_B^+-\eta}{p_B^+-q_B}},
\end{equation}
where
$$C^*(\eta):=\left(\frac{p_B^+-q_B}{p_B^+-\eta}\right)^{\frac{p_B^+-\eta}{p_B^+-q_B}}\left(\frac{p_B^+-q_B}{\eta-q_B}\right)^{\frac{\eta-q_B}{p_B^+-q_B}}C_p^{\frac{\eta-q_B}{p_B^+-q_B}}C_B^{\frac{p_B^+-\eta}{p_B^+-q_B}}\ \ \text{for}\ \ \eta\in [s_B^-,s_B^+].$$ 
Here, $C_p$ and $C_B$ are constants taken from \eqref{A6}. Clearly, $\lambda_\star$ is well defined and $\lambda_\star\in [0,\infty)$. The existence of a nontrivial nonnegative solution for the sandwich case is shown in the next theorem.

\begin{theorem}\label{Theo.Sandwich} Let $(\textup{A0})-(\textup{A5})$, $\textup{(S)}$, $\textup{(W)}$, $\textup{(L1)}$ and $\textup{(L2)}$ hold. Then for each given $\lambda>\lambda_\star,$ there exists $\theta_\star>0$ such that for any $\theta\in [0,\theta_\star)$, problem~\eqref{Eq.Sandwich} has a nontrivial nonnegative solution. %{\color{red} Moreover, if we assume in addition that $m\in L^\infty(\Omega)$, then $u\in C^{1,\beta}(\overline{\Omega})$ for some $\beta\in(0,1)$ and $u>0$ everywhere in $\Omega.$}.
\end{theorem}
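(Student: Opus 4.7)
The plan is to produce a nontrivial nonnegative solution of \eqref{Eq.Sandwich} as a local minimizer of the truncated energy
\begin{equation*}
J_+(u) := \int_\Omega A(x,\nabla u)\diff x - \lambda \int_\Omega \frac{m(x)}{s(x)}u_+^{s(x)}\diff x - \theta \int_\Omega \frac{b(x)}{t(x)}u_+^{t(x)}\diff x
\end{equation*}
on $W_0^{1,p(\cdot)}(\Omega)$. Testing the Euler--Lagrange equation for $J_+$ against $u_-$ and using $(\textup{A0})$ and $(\textup{A3})$ forces $u_-\equiv 0$, so any critical point of $J_+$ is automatically a nonnegative weak solution of \eqref{Eq.Sandwich}; hence it suffices to produce a single nontrivial critical point.

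The first step is to exhibit a direction at negative energy. By the definition of $\lambda_\star$, there is $\phi\in\mathcal{A}$ such that, setting $\alpha := \int_B|\nabla\phi|^{p_B^+}\diff x$, $\beta := \int_B|\nabla\phi|^{q_B}\diff x$, and $M := \int_B \frac{m}{s}\phi_+^{s(x)}\diff x$, one has $\min_{\tau>0}(C_p\tau^{p_B^+}\alpha + C_B \tau^{q_B}\beta)/(M\tau^\eta) < \lambda$ for every $\eta\in[s_B^-,s_B^+]$; a weighted AM--GM computation in $\tau$ identifies the minimum as exactly $C^*(\eta)\alpha^{(\eta-q_B)/(p_B^+-q_B)}\beta^{(p_B^+-\eta)/(p_B^+-q_B)}/M$, matching the definition of $\lambda_\star$. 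Combining \eqref{A6} with $\phi\in C_c^\infty(B)$, $(\textup{L2})$, and $\theta\geq 0$ yields
\begin{equation*}
J_+(\tau\phi) \leq C_p\tau^{p_B^+}\alpha + C_B\tau^{q_B}\beta - \lambda\tau^{s_B(\tau)}M, \qquad \tau>0.
\end{equation*}
Writing $\tau_\eta^*$ for the explicit minimizer at exponent $\eta$, one checks that $\tau_\eta^*$ is continuous and monotone in $\eta\in[s_B^-,s_B^+]$, so a case analysis comparing $\tau_{s_B^\pm}^*$ with $1$ (using that $\tau^{\eta-s_B(\tau)}\leq 1$ for $\tau\gtreqqless 1$ and the appropriate endpoint of $\eta$) produces $\tau_0>0$ with the right-hand side strictly negative. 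Thus $J_+(\tau_0\phi)<0$ uniformly in $\theta\geq 0$.

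Next I establish a well geometry at some radius $R_0>\|\tau_0\phi\|$. Using $(\textup{A3})$, the modular--norm relations on $W_0^{1,p(\cdot)}(\Omega)$, the critical embedding \eqref{cri.imb}, and the boundedness of $m$ and $b$, one obtains
\begin{equation*}
J_+(u) \geq c_1\|u\|^{p^-} - \lambda c_2\|u\|^{s^+} - \theta c_3\bigl(\|u\|^{t^-}+\|u\|^{t^+}\bigr) \quad \text{for } \|u\|\geq 1,
\end{equation*}
with analogous estimates for $\|u\|<1$. Since $s^+<p^-$, the map $R\mapsto c_1R^{p^-}-\lambda c_2R^{s^+}$ attains a positive maximum on $(0,\infty)$; one may pick $R_0>\|\tau_0\phi\|$ where this value is strictly positive. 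Choosing $\theta_\star=\theta_\star(\lambda)>0$ so small that $\theta c_3(R_0^{t^-}+R_0^{t^+})$ is less than half of that maximum ensures $J_+(u)\geq\delta_0>0$ whenever $\|u\|=R_0$ and $\theta\in[0,\theta_\star)$. Consequently $c_\star := \inf_{\|u\|\leq R_0}J_+(u)$ satisfies $c_\star\leq J_+(\tau_0\phi)<0<\delta_0$.

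Finally, Ekeland's variational principle on the closed ball produces a Palais--Smale sequence $\{u_n\}$, eventually in the open ball by the well structure, with $J_+(u_n)\to c_\star$ and $J_+'(u_n)\to 0$; let $u^*$ be a weak limit in $W_0^{1,p(\cdot)}(\Omega)$. The main obstacle is the failure of weak continuity of the critical term, and I overcome it by applying the concentration--compactness principle for variable exponents recalled in Section \ref{Pre}. Any atom $\nu_j>0$ of $|u_n|^{t(x)}\rightharpoonup|u^*|^{t(x)}+\sum_j\nu_j\delta_{x_j}$ at $x_j\in\mathcal{C}$ is coupled to the gradient atom $\mu_j\geq S^{p(x_j)}\nu_j^{p(x_j)/t(x_j)}$, so it contributes at least
\begin{equation*}
\tfrac{S^{p(x_j)}}{p(x_j)}\nu_j^{p(x_j)/t(x_j)} - \tfrac{\theta b(x_j)}{t(x_j)}\nu_j
\end{equation*}
to $\lim J_+(u_n)-J_+(u^*)$. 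The containment $\{u_n\}\subset\{\|u\|\leq R_0\}$ together with \eqref{cri.imb} bounds each $\nu_j$ by a constant $N_0=N_0(R_0)$ independent of $\theta$; since the above quantity is strictly positive for $0<\nu_j\leq N_0$ as soon as $\theta < (\text{const})\cdot N_0^{-(1-p(x_j)/t(x_j))}$, shrinking $\theta_\star$ once more excludes all atoms, for otherwise $c_\star=\lim J_+(u_n)>J_+(u^*)\geq c_\star$, a contradiction. Hence $\int_\Omega b u_{n,+}^{t(\cdot)}\diff x\to\int_\Omega b (u^*)_+^{t(\cdot)}\diff x$, so $\langle J_+'(u_n),u_n-u^*\rangle\to 0$ combined with the standard Leray--Lions argument based on $(\textup{A4})$ upgrades weak to strong convergence in $W_0^{1,p(\cdot)}(\Omega)$. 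The limit $u^*$ is therefore a critical point of $J_+$ with $J_+(u^*)=c_\star<0$, so $u^*\not\equiv 0$ is the desired nontrivial nonnegative solution.
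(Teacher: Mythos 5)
Your overall architecture mirrors the paper's: establish the local geometry around $0$ in a ball of radius $R$ depending on $\lambda$, shrink $\theta$ so the well on $\partial B_{R}$ persists, apply Ekeland to get a $\textup{(PS)}_{c_\star}$-sequence at a negative level, and invoke concentration–compactness to recover compactness. The first half (the geometry, and the derivation that $\lambda>\lambda_\star$ gives a direction $\tau_0\phi$ of negative energy) is essentially the paper's Lemma~\ref{le.sandwhich.local_geo}, including the weighted AM–GM/two-term Young identification of the threshold; that part is correct.

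The gap is in the exclusion of concentration atoms. You assert that an atom $(\mu_j,\nu_j)$ contributes at least $\frac{S^{p(x_j)}}{p(x_j)}\nu_j^{p(x_j)/t(x_j)}-\frac{\theta b(x_j)}{t(x_j)}\nu_j$ to $\lim J_+(u_n)-J_+(u^*)$, and then derive a contradiction from $c_\star=\lim J_+(u_n)>J_+(u^*)\geq c_\star$. This inequality is \emph{not} available for a general Leray--Lions integrand $A$. What you actually have is
$\lim\int_\Omega A(x,\nabla u_n)\diff x \geq \lim\int_\Omega \frac{|\nabla u_n|^{p(x)}}{p(x)}\diff x = \int_{\overline\Omega}\frac{\diff\mu}{p(x)} \geq \int_\Omega\frac{|\nabla u^*|^{p(x)}}{p(x)}\diff x + \sum_j\frac{\mu_j}{p(x_j)}$,
but $(\textup{A3})$ gives $\frac{|\nabla u^*|^{p(x)}}{p(x)}\leq A(x,\nabla u^*)$ (possibly strictly), so you cannot lower-bound $\lim\int A(x,\nabla u_n)\diff x$ by $\int A(x,\nabla u^*)\diff x+\sum_j\frac{\mu_j}{p(x_j)}$. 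The chain $c_\star\geq J_+(u^*)+\sum_j[\cdots]\geq c_\star+\sum_j[\cdots]$ therefore does not close, unless $A(x,\xi)\equiv\frac{|\xi|^{p(x)}}{p(x)}$. (A cosmetic issue in the same step: in Theorem~\ref{T.CCP} the measure $\nu$ already carries the weight $b$, so the atom's contribution to $\theta\int\frac{b}{t}|u_n|^{t(x)}\diff x$ is $\theta\nu_j/t(x_j)$, without the extra $b(x_j)$ factor.)

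The paper avoids this by not comparing energies directly. It tests $J'(u_n)\to 0$ against cutoff functions $\phi_{i,\epsilon}u_n$ concentrated at each $x_i$, which together with $(\textup{A2})$--$(\textup{A3})$ yields $\mu_i\leq\theta\nu_i$; combined with the CCP inequality $S_b\nu_i^{1/t(x_i)}\leq\mu_i^{1/p(x_i)}$ this forces $\theta\nu_i\geq S_b^N\min\{\theta^{-1/(h^+-1)},\theta^{-1/(h^--1)}\}$. Feeding this into the algebraic identity $c=J(u_n)-\frac{1}{p^+}\langle J'(u_n),u_n\rangle+o_n(1)$, where $(\textup{A3})$ makes the gradient part nonnegative after subtracting the Pohozaev-type multiple, produces a lower bound on $c$ that contradicts the smallness of $\theta$. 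To fix your argument you should replace the energy comparison by this test-function step (as in Lemma~\ref{le.ps1} and Lemma~\ref{le.sandwich.PS}(ii)), or else prove a Brézis--Lieb decomposition for $\int A(x,\nabla u_n)\diff x$ — which you have not done and which does not hold for the whole $(\textup{A0})$--$(\textup{A5})$ class without further structure.
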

\begin{remark}\rm
	It is worth pointing out that  if \ $b(x)|u|^{t(x)-2}u$\ is replaced by a subcritical term\ $\omega(x)|u|^{r(x)-2}u$, where $s^+<p^-\leq p^+<r^-$ and $r(x)<p^*(x)$ for all $x\in\overline{\Omega}$ and $\omega\in L_+^{\frac{p^\ast(\cdot)}{p^\ast(\cdot)-r(\cdot)}}(\Omega)$, then we can obtain one more nontrivial nonnegative solution to problem~\eqref{Eq.Sandwich} without assuming any combined condition on $m$ and $\omega$ by applying the Mountain Pass Theorem. % (see also \cite{HS.AML21}). 
	If $r(\cdot)$ is constant, it suffices to assume $\omega\in L^{\frac{p^\ast(\cdot)}{p^\ast(\cdot)-r(\cdot)}}(\Omega)$ and $\operatorname{meas}\, \{x\in \Omega:\, \omega(x)>0 \}>0$ instead of $\omega\in L_+^{\frac{p^\ast(\cdot)}{p^\ast(\cdot)-r(\cdot)}}(\Omega)$ (see Theorem~\ref{Theo.Sandwich.Subcritical}).
\end{remark}

\par
\noindent\textbf{Outline of the paper.} We organize the paper as follows: In Section~\ref{Pre} we review the definition and properties of Lebesgue-Sobolev spaces with variable exponents and introduce notations that are used throughout this paper. In Section~\ref{sub-super} we investigate the existence of infinitely many
solutions for the locally $p(\cdot)$-sublinear case and the multiplicity of solutions for the locally $p(\cdot)$-superlinear case by giving proofs of Theorems~\ref{Theo.Sublinear} and \ref{Theo.Superlinear}. Section~\ref{sandwich} is devoted to the study of the existence of $k$ pairs of solutions with an arbitrarily given number $k$ and a nontrivial nonnegative solution to problem \eqref{Eq.General} when the nonlinear $f$ is of a (locally) sandwich type by giving a proof of  Theorem~\ref{Theo.Sandwich-infty} and Theorem~\ref{Theo.Sandwich}. In this section, we also discuss problem~\eqref{Eq.Sandwich} when the growth is subcritical. Finally, in Section~\ref{Comments}, we comment on the regularity of solutions to problem~\eqref{Eq.General} obtained in the previous sections.

%$$$$$$$$$$$$$$$$$$$$$$ SECTION 2. SUPERLINEAR CASE $$$$$$$$$$$$$$$$$$$$$$%

\section{Notions and preliminary results}\label{Pre}

Let $\Omega$ be a bounded Lipschitz domain in $\mathbb{R}^N.$ Denote
$$
C_+(\overline\Omega):=\left\{h\in C(\overline\Omega):
1<\inf_{x\in\overline\Omega}h(x)\leq \sup_{x\in\overline\Omega}h(x)<\infty\right\},
$$
and for $h\in C_+(\overline\Omega),$ denote
$$
h^+:=\sup\limits_{x\in \overline\Omega}h(x)\ \  \hbox{and}\ \
h^-:=\inf\limits_{x\in \overline\Omega}h(x).
$$
For $p\in C_+(\overline\Omega)$ and for a Lebesgue measurable
and positive a.e. function $w : \Omega\to \R$, define the weighted variable exponent Lebesgue space $L^{p(\cdot)}(w,\Omega)$ as
$$
L^{p(\cdot)}(w,\Omega) := \left \{ u : \Omega\to\mathbb{R}\  \hbox{is measurable},\ \int_\Omega w(x)|u(x)|^{p(x)}\diff x < \infty \right \}
$$
endowed with the Luxemburg norm
$$
\norm{u}_{L^{p(\cdot)}(w,\Omega)}:=\inf\left\{\lambda >0:
\int_\Omega
w(x)\left|\frac{u(x)}{\lambda}\right|^{p(x)}\diff x\le1\right\}.
$$
Set $L_+^{p(\cdot)}(w,\Omega):=\left\{u\in L^{p(\cdot)}(w,\Omega): \ u>0 \ \text{a.e. in}\ \Omega\right\}$. When $w\equiv 1$, we write  $L^{p(\cdot) }(\Omega) $, $L_+^{p(\cdot)}(\Omega)$  and $\norm{u}_{L^{p(\cdot)}(\Omega)}$  in place of $L^{p(\cdot)}(w,\Omega)$, $L_+^{p(\cdot)}(w,\Omega)$ and $\norm{u}_{L^{p(\cdot)}(w,\Omega)}$, respectively. Some basic properties of $L^{p(\cdot)}(w,\Omega)$ are listed in the next three propositions.

\begin{proposition} \label{Holder ineq}{\rm (\cite{FZ, KR})}
	The space $L^{p(\cdot)}(\Omega)$ is a separable, uniformly
	convex Banach space, and its dual space is
	$L^{p'(\cdot)}(\Omega),$ where $1/p(x)+1/p'(x)=1$. For any $u\in
	L^{p(\cdot)}(\Omega)$ and $v\in L^{p'(\cdot)}(\Omega)$, we have
	$$
	\Big|\int_\Omega uv\diff x\Big|%\le \left(\frac{1}{p^{-}}+\frac{1}{(p')^-}\right)\norm{u}_{L^{p(\cdot)}(\Omega)}\norm{v}_{L^{p'(\cdot)}(\Omega)}
	\le
	2\norm{u}_{L^{p(\cdot)}(\Omega)}\norm{v}_{L^{p'(\cdot)}(\Omega)}.
	$$
\end{proposition}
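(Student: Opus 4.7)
The plan is to establish the four assertions in the order: Hölder inequality, Banach space structure, separability, and finally uniform convexity (which I expect to be the main obstacle). All arguments will be modular-to-norm translations based on the unit ball property of the Luxemburg norm, namely that $\|u\|_{L^{p(\cdot)}(\Omega)}\leq 1$ iff $\int_\Omega |u|^{p(x)}\diff x\leq 1$, and that equality holds when $\|u\|_{L^{p(\cdot)}(\Omega)}=1$.

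First I would prove the Hölder inequality, which is the most concrete assertion. The starting point is the pointwise Young inequality
\[
|ab|\leq \frac{|a|^{p(x)}}{p(x)}+\frac{|b|^{p'(x)}}{p'(x)}\quad\text{for all } a,b\in\R,
\]
valid with $1/p(x)+1/p'(x)=1$. Normalizing, set $\tilde u := u/\|u\|_{L^{p(\cdot)}(\Omega)}$ and $\tilde v := v/\|v\|_{L^{p'(\cdot)}(\Omega)}$ (after handling the trivial zero cases). Applying Young's inequality to $(\tilde u(x),\tilde v(x))$, integrating over $\Omega$, and using the unit ball property yields
\[
\int_\Omega |\tilde u\tilde v|\diff x\leq \int_\Omega \frac{|\tilde u|^{p(x)}}{p(x)}\diff x+\int_\Omega \frac{|\tilde v|^{p'(x)}}{p'(x)}\diff x\leq 1+1=2,
\]
and multiplying back by the norms gives the claimed factor of $2$. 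This intermediate step also shows that the linear functional $v\mapsto \int uv\diff x$ has norm at most $2\|u\|_{L^{p(\cdot)}(\Omega)}$ on $L^{p'(\cdot)}(\Omega)$; hence the embedding $L^{p(\cdot)}(\Omega)\hookrightarrow \bigl(L^{p'(\cdot)}(\Omega)\bigr)^\ast$ is continuous.

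Next I would address the Banach space structure. Using Hölder's inequality with $v=\chi_E$ (noting $\chi_E\in L^{p'(\cdot)}(\Omega)$ since $p^+<\infty$), one checks that convergence in $\|\cdot\|_{L^{p(\cdot)}(\Omega)}$ forces convergence in measure; completeness then follows the classical template for $L^p$-spaces: given a Cauchy sequence $\{u_n\}$, extract a subsequence with rapidly decaying successive differences, form the a.e. limit $u$, and use Fatou's lemma on the modular $\int |u_n-u|^{p(x)}\diff x$ together with the norm--modular equivalence $\min(\rho^{1/p^-},\rho^{1/p^+})\leq \|u\|_{L^{p(\cdot)}(\Omega)}\leq\max(\rho^{1/p^-},\rho^{1/p^+})$ with $\rho=\int|u|^{p(x)}\diff x$ to pass to the limit. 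Separability is then handled by exhibiting a countable dense set: simple functions with rational coefficients supported on dyadic (or rational) boxes, which is dense first in the modular topology (via monotone convergence and truncation in $\Omega$), then in norm by the above equivalence.

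The hard part will be uniform convexity and the identification of the dual space, since variable exponent spaces lack the clean Clarkson inequalities of $L^p$. For uniform convexity I would follow the route pioneered in the references cited, namely a pointwise inequality of the form
\[
\left|\frac{a+b}{2}\right|^{p(x)}+\delta(x,\varepsilon)|a-b|^{p(x)}\leq \frac{|a|^{p(x)}+|b|^{p(x)}}{2},
\]
with $\delta(x,\varepsilon)$ controllable since $1<p^-\leq p^+<\infty$; integrating and translating back through the norm--modular equivalence yields uniform convexity (a careful splitting of $\Omega$ into the sets where $p(x)\leq 2$ and $p(x)>2$ may be needed, mirroring the classical Clarkson argument). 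For the dual, the map $\Phi:L^{p'(\cdot)}(\Omega)\to (L^{p(\cdot)}(\Omega))^\ast$, $v\mapsto \int \cdot\, v\diff x$, is injective and continuous by Hölder; surjectivity follows from a Radon–Nikodym argument applied to the set function $E\mapsto \ell(\chi_E)$ for $\ell\in (L^{p(\cdot)}(\Omega))^\ast$, together with a variable-exponent version of the converse Hölder inequality to show the resulting density lies in $L^{p'(\cdot)}(\Omega)$. Since the statement is quoted directly from \cite{FZ,KR}, in the write-up I would refer to those references rather than redo the Clarkson-type computation in full.
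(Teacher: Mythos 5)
The paper cites this proposition directly from \cite{FZ,KR} and gives no proof of its own, so there is no in-paper argument to compare against; your sketch has to be judged against the standard proofs in those references, and it essentially follows the same route. The H\"older step is correct: normalizing and applying pointwise Young plus the unit-ball property of the Luxemburg norm gives $\int_\Omega |\tilde u\tilde v|\diff x \le \int_\Omega \tfrac{|\tilde u|^{p(x)}}{p(x)}\diff x + \int_\Omega \tfrac{|\tilde v|^{p'(x)}}{p'(x)}\diff x \le 2$ (and in fact the sharper bound $\tfrac{1}{p^-}+\tfrac{1}{(p')^-}$, which the statement coarsens to $2$). The completeness, separability, and dual-space arguments are the classical templates adapted through the norm--modular comparison, and they go through as you describe.

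The one place your sketch has a genuine gap rather than a mere abbreviation is the uniform-convexity step. The pointwise inequality you write,
\[
\left|\tfrac{a+b}{2}\right|^{p(x)}+\delta(x,\varepsilon)\,|a-b|^{p(x)} \le \tfrac{|a|^{p(x)}+|b|^{p(x)}}{2},
\]
holds (with a suitable $\delta$) only where $p(x)\ge 2$. For $1<p(x)<2$ there is no additive pointwise inequality of this shape; the second Clarkson inequality has the different structure
\[
\left|\tfrac{a+b}{2}\right|^{p'(x)}+\left|\tfrac{a-b}{2}\right|^{p'(x)} \le \left(\tfrac{|a|^{p(x)}+|b|^{p(x)}}{2}\right)^{p'(x)-1},
\]
with the conjugate exponent on the left. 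Your parenthetical about splitting $\Omega$ into $\{p\le 2\}$ and $\{p>2\}$ correctly anticipates the issue, but this split is not a refinement of a single argument: on the low-exponent part the estimate runs through the modular of the conjugate exponent, and stitching the two regional estimates into one modulus of convexity for the Luxemburg norm requires some care with the norm--modular inequalities because different exponents govern different subsets. That patching is the technical heart of the proofs in \cite{FZ,KR}, and as written your sketch does not supply it. For a self-contained write-up you would either reproduce that case analysis or, as you suggest at the end, defer to the cited references -- which is exactly what the paper does.
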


%---------------PROP2: ESTIMATING MODULAR BY NORM--------------------%
\begin{proposition}[\cite{Diening}] \label{norm-modular}
	Define $\rho :L^{p(\cdot) }(w,\Omega )$ $ \to \mathbb{R}$ as
	\[
	\rho (u):=\int_{\Omega }w(x)| u(x)| ^{p(x) }\diff x,\quad
	\forall u\in L^{p(\cdot)}(w,\Omega ) .
	\]
	Then, for all $u\in L^{p(\cdot) }(w,\Omega)$  we have
	\begin{itemize}
		\item[(i)] $\norm{u}_{L^{p(\cdot)}(w,\Omega)}<1$ $(=1,>1)$
		if and only if \  $\rho (u) <1$ $(=1,>1)$, respectively;
		
		\item[(ii)] if \  $\norm{u}_{L^{p(\cdot)}(w,\Omega)}>1,$ then  $\norm{u}^{p^{-}}_{L^{p(\cdot)}(w,\Omega)}\leq \rho (u) \leq \norm{u}_{L^{p(\cdot)}(w,\Omega)}^{p^{+}}$;
		\item[(iii)] if \ $\norm{u}_{L^{p(\cdot)}(w,\Omega)}<1,$ then $\norm{u}_{L^{p(\cdot)}(w,\Omega)}^{p^{+}}\leq \rho
		(u) \leq \norm{u}_{L^{p(\cdot)}(w,\Omega)}^{p^{-}}$.
	\end{itemize}
Consequently, we have
$$\|u\|_{L^{p(\cdot)}(w,\Omega)}^{p^-}-1\leq \rho (u)\leq \|u\|_{L^{p(\cdot)}(w,\Omega)}^{p^+}+1, \quad \forall u\in L^{p(\cdot)}(w,\Omega).$$
	
\end{proposition}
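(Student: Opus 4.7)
The plan is to read off parts (ii) and (iii) directly from the definition of the Luxemburg norm by an elementary scaling argument, then deduce (i) as a near-immediate consequence (with one boundary case requiring monotone convergence), and finally combine everything to obtain the stated additive two-sided bound.

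The one identity doing all the work is: for any $\lambda>0$,
\[
\rho(u/\lambda)=\int_\Omega w(x)\,\lambda^{-p(x)}|u(x)|^{p(x)}\diff x,
\]
together with the elementary observation that if $\lambda>1$ then $\lambda^{-p^+}\le \lambda^{-p(x)}\le \lambda^{-p^-}$ pointwise, while if $0<\lambda<1$ the inequalities reverse. Integrating against $w|u|^{p(x)}$ yields the sandwich
\[
\lambda^{-p^+}\rho(u)\le \rho(u/\lambda)\le \lambda^{-p^-}\rho(u)\qquad(\lambda>1),
\]
and the reversed sandwich for $\lambda<1$. This is the only nontrivial computation.

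From here, (ii) is a two-line argument. Assume $\|u\|_{L^{p(\cdot)}(w,\Omega)}>1$. For each $\varepsilon\in(0,\|u\|-1)$ the value $\|u\|-\varepsilon$ is not admissible in the definition of the infimum, so $\rho(u/(\|u\|-\varepsilon))>1$, and since $\|u\|-\varepsilon>1$ the left sandwich gives $\rho(u)\ge(\|u\|-\varepsilon)^{p^-}$; letting $\varepsilon\to 0^+$ yields $\rho(u)\ge\|u\|^{p^-}$. Conversely, any $\lambda>\|u\|$ is admissible, so $\rho(u/\lambda)\le 1$, and the right sandwich gives $\rho(u)\le\lambda^{p^+}$; letting $\lambda\to\|u\|^+$ gives $\rho(u)\le\|u\|^{p^+}$. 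Part (iii) for $\|u\|<1$ is the same argument with the roles of $p^-$ and $p^+$ exchanged, using the reversed sandwich.

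For (i), the strict equivalences $\|u\|<1\Leftrightarrow\rho(u)<1$ and $\|u\|>1\Leftrightarrow\rho(u)>1$ are immediate from (ii) and (iii). The boundary case $\|u\|=1\Leftrightarrow\rho(u)=1$ is the one subtle point, which I would handle by picking $\lambda_n\downarrow 1$ (admissible, so $\rho(u/\lambda_n)\le 1$) and using monotone convergence on the increasing pointwise sequence $w(x)|u/\lambda_n|^{p(x)}\nearrow w(x)|u(x)|^{p(x)}$ to get $\rho(u)\le 1$; and $\mu_n\uparrow 1$ (not admissible, so $\rho(u/\mu_n)>1$) using dominated convergence (the sequence is decreasing in $n$ and dominated by $w|u/\mu_1|^{p(x)}$, whose integral is finite for $\mu_1$ slightly larger than some admissible value) to get $\rho(u)\ge 1$. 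Combining yields $\rho(u)=1$; the converse is symmetric.

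Finally, the consequence $\|u\|^{p^-}-1\le\rho(u)\le\|u\|^{p^+}+1$ follows by splitting into $\|u\|\ge 1$ and $\|u\|<1$: in the former (ii) gives both bounds on the nose, and in the latter the left inequality is trivial because $\|u\|^{p^-}\le 1$, while the right one follows from $\rho(u)\le\|u\|^{p^-}\le 1\le\|u\|^{p^+}+1$ via (iii). The main obstacle, such as it is, is purely bookkeeping around the boundary case $\|u\|=1$ and the justification of the limit passages; every other step is a direct consequence of the scaling sandwich above.
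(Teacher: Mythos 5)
Your proof is correct, and it is the standard textbook argument for these modular–norm relations (the paper itself cites this proposition from [Diening] without proof, so there is no "paper's proof" to compare against). The scaling identity $\rho(u/\lambda)=\int_\Omega w(x)\lambda^{-p(x)}|u(x)|^{p(x)}\,\diff x$ together with the pointwise bounds $\lambda^{-p^+}\le\lambda^{-p(x)}\le\lambda^{-p^-}$ for $\lambda>1$ (reversed for $\lambda<1$) is exactly the mechanism used in the standard references, and your limiting arguments for (ii)–(iii) and the boundary case $\|u\|=1$ in (i) are sound; you correctly use that $\rho(u)<\infty$ by the very definition of $L^{p(\cdot)}(w,\Omega)$ and that $p^+<\infty$ to justify the dominated/monotone convergence passages.

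Two cosmetic remarks. First, the labels ``left sandwich'' and ``right sandwich'' are used inconsistently: the bound $\rho(u)\ge(\|u\|-\varepsilon)^{p^-}$ comes from the inequality $\rho(u/\lambda)\le\lambda^{-p^-}\rho(u)$ combined with $\rho(u/\lambda)>1$, whereas the bound $\rho(u)\le\lambda^{p^+}$ comes from $\lambda^{-p^+}\rho(u)\le\rho(u/\lambda)$ combined with $\rho(u/\lambda)\le 1$; calling these ``left'' and ``right'' would be clearer if the convention were fixed once. Second, the parenthetical ``whose integral is finite for $\mu_1$ slightly larger than some admissible value'' is unnecessary and a bit garbled: $\rho(u/\mu_1)\le\mu_1^{-p^+}\rho(u)<\infty$ holds for \emph{every} $\mu_1>0$ once $\rho(u)<\infty$, so any $\mu_1\in(0,1)$ works as the first term of the decreasing sequence. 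Neither issue affects the validity of the argument.
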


\begin{proposition} [\cite{Kim-Sim}] \label{norm-norm}
	Let $p\in C_{+}(\overline{\Omega })$ and $q\in C(\overline{\Omega })$ such that $pq\in C_{+}(\overline{\Omega })$. For all $u\in L^{p(\cdot)q(\cdot) }(w,\Omega )$, it holds that
	\begin{itemize}
		
		\item[(i)] if  $\|u\|_{L^{p(\cdot)q(\cdot)}(w,\Omega)}\geq 1$, then  $\|u\|_{L^{p(\cdot)q(\cdot)}(w,\Omega)}^{q^-}\leq \big\||u|^q\big\|_{L^{p(\cdot)}(w,\Omega)}\leq \|u\|_{L^{p(\cdot)q(\cdot)}(w,\Omega)}^{q^+}$;
		\item[(ii)] if  $\|u\|_{L^{p(\cdot)q(\cdot)}(w,\Omega)}<1$, then  $\|u\|_{L^{p(\cdot)q(\cdot)}(w,\Omega)}^{q^+}\leq \big\||u|^q\big\|_{L^{p(\cdot)}(w,\Omega)}\leq \|u\|_{L^{p(\cdot)q(\cdot)}(w,\Omega)}^{q^-}$.
	\end{itemize}
	Consequently, we have
	$$\|u\|_{L^{p(\cdot)q(\cdot)}(w,\Omega)}^{q^-}-1\leq \big\||u|^q\big\|_{L^{p(\cdot)}(w,\Omega)}\leq \|u\|_{L^{p(\cdot)q(\cdot)}(w,\Omega)}^{q^+}+1,\quad \forall u\in L^{p(\cdot)q(\cdot) }(w,\Omega).$$
\end{proposition}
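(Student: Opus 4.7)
The plan is to reduce all four inequalities in (i) and (ii) to the modular--norm correspondence stated in Proposition~\ref{norm-modular}. Set $\alpha:=\|u\|_{L^{p(\cdot)q(\cdot)}(w,\Omega)}$; we may discard the trivial case $u\equiv 0$, so $\alpha>0$. By the homogeneity of the Luxemburg norm and Proposition~\ref{norm-modular}(i) applied to $u/\alpha\in L^{p(\cdot)q(\cdot)}(w,\Omega)$, one has the master identity
\begin{equation*}
\int_\Omega w(x)\,\alpha^{-p(x)q(x)}|u(x)|^{p(x)q(x)}\diff x=1.
\end{equation*}
The key observation is that every target bound has the form $\alpha^{q^\pm}$ (a \emph{constant} exponent of $\alpha$), whereas the integrand above carries the pointwise exponent $\alpha^{-p(x)q(x)}$. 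The proof therefore hinges on comparing $\alpha^{-p(x)q^\pm}$ with $\alpha^{-p(x)q(x)}$ via the monotonicity of $t\mapsto \alpha^{-t}$, which is decreasing when $\alpha\ge 1$ and increasing when $\alpha<1$.

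For part (i), assume $\alpha\ge 1$. Since $q^-\le q(x)\le q^+$, decreasing monotonicity yields $\alpha^{-p(x)q^+}\le \alpha^{-p(x)q(x)}\le \alpha^{-p(x)q^-}$ pointwise a.e. Multiplying by $w(x)|u(x)|^{p(x)q(x)}\ge 0$ and integrating, the master identity gives
\begin{equation*}
\int_\Omega w(x)\Big(\tfrac{|u|^{q(x)}}{\alpha^{q^+}}\Big)^{p(x)}\diff x \le 1 \le \int_\Omega w(x)\Big(\tfrac{|u|^{q(x)}}{\alpha^{q^-}}\Big)^{p(x)}\diff x.
\end{equation*}
Applying Proposition~\ref{norm-modular}(i) to $|u|^q/\alpha^{q^\pm}\in L^{p(\cdot)}(w,\Omega)$ translates these modular inequalities into $\||u|^q\|_{L^{p(\cdot)}(w,\Omega)}\le \alpha^{q^+}$ and $\||u|^q\|_{L^{p(\cdot)}(w,\Omega)}\ge \alpha^{q^-}$, which is (i). Part (ii) is entirely parallel: when $\alpha<1$, $t\mapsto \alpha^{-t}$ is increasing, so the roles of $q^-$ and $q^+$ in the pointwise comparison are swapped, and the same modular--norm correspondence delivers $\alpha^{q^+}\le \||u|^q\|_{L^{p(\cdot)}(w,\Omega)}\le \alpha^{q^-}$.

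For the ``Consequently'' inequality, note that $p,pq\in C_+(\overline{\Omega})$ (together with $p>0$) forces $q^->0$. When $\alpha\ge 1$, (i) already gives $\alpha^{q^-}-1\le \alpha^{q^-}\le \||u|^q\|_{L^{p(\cdot)}(w,\Omega)}\le \alpha^{q^+}\le \alpha^{q^+}+1$; when $\alpha<1$, we have $\alpha^{q^-}\le 1$ and $\alpha^{q^+}\ge 0$, so (ii) yields $\alpha^{q^-}-1\le 0\le \||u|^q\|_{L^{p(\cdot)}(w,\Omega)}\le \alpha^{q^-}\le 1\le \alpha^{q^+}+1$. I do not expect any substantive obstacle; the only point requiring care is bookkeeping the direction of the exponential inequality according to whether $\alpha\lessgtr 1$, and noting that the hypothesis $pq\in C_+(\overline{\Omega})$ (rather than merely $p,q\in C(\overline{\Omega})$) is precisely what ensures $q^->0$ and hence the validity of the two convenient reductions used in the final step.
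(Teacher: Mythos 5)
The paper states Proposition~\ref{norm-norm} by reference to \cite{Kim-Sim} and gives no proof, so there is no in-paper argument to compare against. Your proof is correct and is the standard one: you use homogeneity of the Luxemburg norm and Proposition~\ref{norm-modular}(i) to get the unit-modular identity for $u/\alpha$, then exploit the pointwise comparison $\alpha^{-p(x)q^{\pm}}\lessgtr\alpha^{-p(x)q(x)}$ (whose direction depends on $\alpha\lessgtr 1$) to sandwich the modular of $|u|^q/\alpha^{q^{\pm}}$, and convert back via the same modular--norm dictionary; the observation that $pq\in C_+(\overline{\Omega})$ together with $p>0$ forces $q^->0$ is exactly what is needed for the concluding two-sided estimate.
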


Define
\[
W^{1,p(\cdot)}(\Omega):=\{u\in L^{p(\cdot) }(\Omega) :
|\nabla u|\in L^{p(\cdot) }(\Omega ) \}
\]
endowed with the norm
\[
\|u\|_{W^{1,p(\cdot)}(\Omega )}:=\|u\|_{L^{p(\cdot)}(\Omega )}+\big\||\nabla u|\big\|_{L^{p(\cdot)}(\Omega )}.
\]
We recall the following crucial imbeddings on $W^{1,p(\cdot)}(\Omega)$. 
\begin{proposition}[\cite{KR,FZ,Diening}] \label{Compact.Imb} The space $W^{1,p(\cdot)}(\Omega )$ is a reflexive separable Banach space. Moreover, the following assertions hold:  
	\begin{itemize}
		\item [(i)] if $q\in C_+(\overline{\Omega})$ with $q(x) < p^{\ast }(x)$ for all $x\in\overline{\Omega}$, then
		$$W^{1,p(\cdot)}(\Omega)\hookrightarrow\hookrightarrow L^{q(\cdot)}(\Omega);$$
		\item [(ii)] if $p\in C^{0, \frac{1}{|\log t|}}(\overline{\Omega})$ and $p^+<N$, then
		$$W^{1,p(\cdot)}(\Omega) \hookrightarrow L^{p^\ast(\cdot) }(\Omega ).$$
	\end{itemize}
\end{proposition}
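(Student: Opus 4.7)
The plan is to address the three assertions of the proposition in turn: reflexivity and separability, the critical embedding (ii), and the compact subcritical embedding (i). All three pieces are classical and fully developed in the cited references, so the sketch only lays out the architecture.

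For reflexivity and separability, the strategy is to view $W^{1,p(\cdot)}(\Omega)$ as a closed subspace of the product $L^{p(\cdot)}(\Omega)\times L^{p(\cdot)}(\Omega)^N$ through the linear isometry $u\mapsto (u,\partial_1 u,\dots,\partial_N u)$, where the codomain carries the sum of the Luxemburg norms. By Proposition~\ref{Holder ineq}, each factor is separable and uniformly convex (in particular reflexive), so the product inherits both properties, and the image is closed because if $(u_n,\nabla u_n)$ converges in the product norm, a diagonal subsequence argument identifies the limit of $\nabla u_n$ with the distributional gradient of the limit of $u_n$. These properties then descend to $W^{1,p(\cdot)}(\Omega)$ as a closed subspace of a reflexive separable Banach space.

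For the critical embedding (ii), the key tool is the boundedness of the Hardy--Littlewood maximal operator on $L^{p(\cdot)}(\R^N)$, which is equivalent in this generality to the log-Hölder continuity of $p$. The plan is: first extend $p$ to a log-Hölder continuous exponent $\tilde p$ on all of $\R^N$ with the same bounds $p^-,p^+$, then extend each $u\in W^{1,p(\cdot)}(\Omega)$ to $\tilde u\in W^{1,\tilde p(\cdot)}(\R^N)$ by a standard Lipschitz (Stein-type) reflection, which is available since $\partial\Omega$ is Lipschitz. One then represents $\tilde u$ pointwise a.e.\ as a Riesz potential of $|\nabla \tilde u|$, applies Hedberg's pointwise inequality, and concludes via the maximal-function bound that the Riesz potential maps $L^{\tilde p(\cdot)}(\R^N)$ into $L^{\tilde p^{\ast}(\cdot)}(\R^N)$, yielding $\|u\|_{L^{p^\ast(\cdot)}(\Omega)}\le \|\tilde u\|_{L^{\tilde p^\ast(\cdot)}(\R^N)}\lesssim \|u\|_{W^{1,p(\cdot)}(\Omega)}$.

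For the compact embedding (i), since $q,p^\ast\in C(\overline\Omega)$ with $q<p^\ast$ on the compact set $\overline\Omega$, there exists $\delta>0$ with $q(x)+\delta\le p^\ast(x)$ on $\overline\Omega$. In the case $p^+<N$, combine (ii) with the classical Rellich--Kondrachov embedding $W^{1,p^-}(\Omega)\hookrightarrow\hookrightarrow L^{p^-}(\Omega)$: any bounded sequence in $W^{1,p(\cdot)}(\Omega)$ is bounded in $L^{p^\ast(\cdot)}(\Omega)$ by (ii) and admits a subsequence converging strongly in $L^{p^-}(\Omega)$. Writing $q(x)=\theta(x) p^-+(1-\theta(x))p^\ast(x)$ for a suitable $\theta\in C(\overline\Omega)$ bounded away from $0$, one applies a Hölder interpolation in the variable exponent setting (using Proposition~\ref{norm-norm}) to upgrade the $L^{p^-}$ convergence, together with the uniform $L^{p^\ast(\cdot)}$ bound, into strong convergence in $L^{q(\cdot)}(\Omega)$. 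In the remaining case $p^+\ge N$, the exponent $p^\ast(\cdot)=p_0(\cdot)$ is already subcritical, so the same interpolation recipe works after replacing (ii) by the direct embedding $W^{1,p(\cdot)}(\Omega)\hookrightarrow W^{1,p^-}(\Omega)\hookrightarrow L^{p_0^+}(\Omega)$.

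The main obstacle is statement (ii): the log-Hölder hypothesis enters in an essential way through the maximal-function bound, and without it the critical Sobolev embedding can fail even for smooth exponents. Once that ingredient is in hand, the remaining steps — interpolation between Lebesgue spaces with variable exponents and invoking classical Rellich--Kondrachov — are routine.
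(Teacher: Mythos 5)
The paper cites this proposition as a known result from the listed references and does not supply a proof, so there is no in-text argument to compare against; what follows assesses your sketch on its own terms.

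Your treatment of reflexivity/separability and of assertion (ii) follows the standard line (closed-subspace argument, and Riesz potential plus Hedberg's inequality plus boundedness of the Hardy--Littlewood maximal operator, after a Lipschitz extension). Two small caveats there: log-H\"older continuity is a \emph{sufficient}, not an equivalent, condition for boundedness of the maximal operator on $L^{p(\cdot)}(\R^N)$, so ``equivalent'' overstates it; and you should say that the Stein-type extension is bounded on $W^{1,p(\cdot)}$ under the log-H\"older hypothesis (this is where the regularity of $p$ enters a second time).

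The genuine gap is in (i). You derive the compact embedding by combining (ii) with the classical Rellich--Kondrachov theorem and an interpolation inequality. But (ii) is proved only under the extra hypothesis that $p$ is log-H\"older continuous, whereas (i) is stated for an arbitrary $p\in C_+(\overline\Omega)$; the critical embedding $W^{1,p(\cdot)}\hookrightarrow L^{p^\ast(\cdot)}$ can fail for merely continuous exponents, so your argument does not cover the full generality of (i). The route taken in the cited sources avoids (ii) altogether: by continuity of $p$ and $q$ and compactness of $\overline\Omega$, cover $\overline\Omega$ by finitely many balls $B_k$ on which $\sup_{B_k}q<\bigl(\inf_{B_k}p\bigr)^\ast$ (resp.\ $<p_0$-type bounds when $p^+\ge N$), apply the constant-exponent Sobolev and Rellich--Kondrachov embeddings on each $B_k\cap\Omega$, and pass to $L^{q(\cdot)}$ by the pointwise inequality $|u|^{q(x)}\le 1+|u|^{\sup_{B_k}q}$ together with Proposition~\ref{norm-modular}; a diagonal extraction over $k$ finishes it. A secondary, more cosmetic issue in your interpolation step is that you implicitly assume $q(x)\ge p^-$, which need not hold; one should first replace $q$ by $\max\{q,p^-\}$, noting that $L^{\max\{q,p^-\}(\cdot)}(\Omega)\hookrightarrow L^{q(\cdot)}(\Omega)$ on a bounded domain, before writing $q$ as a convex combination of $p^-$ and $p^\ast$.
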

\noindent The space $W_0^{1,p(\cdot)}(\Omega )$ defined in Section~\ref{Intro} is in fact the closure of $C_c^\infty(\Omega)$ in $W^{1,p(\cdot)}(\Omega)$ due to the following Poincar\'{e} type inequality.
\begin{proposition}[\cite{FZ}]
	There exists a positive constant $C$ such that
	\begin{equation*}
		\|u\|_{L^{p(\cdot)}(\Omega)}\leq C\big\|\nabla u\big\|_{L^{p(\cdot)}(\Omega)},\quad \forall u\in W_0^{1,p(\cdot)}(\Omega).
	\end{equation*}
\end{proposition}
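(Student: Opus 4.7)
The plan is to establish the inequality on the dense subspace $C_c^\infty(\Omega)$ and then extend to $W_0^{1,p(\cdot)}(\Omega)$ by density. Indeed, by the very definition of $W_0^{1,p(\cdot)}(\Omega)$ as the completion of $C_c^\infty(\Omega)$ under $\|\nabla\cdot\|_{L^{p(\cdot)}(\Omega)}$, it is enough to exhibit a constant $C=C(\Omega,p)>0$ such that
$$\|u\|_{L^{p(\cdot)}(\Omega)}\le C\,\|\,|\nabla u|\,\|_{L^{p(\cdot)}(\Omega)} \qquad \text{for all } u\in C_c^\infty(\Omega),$$
after which the inequality propagates to arbitrary $u\in W_0^{1,p(\cdot)}(\Omega)$ by a standard Cauchy-sequence argument.

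The approach I would take is a Riesz-potential representation. Extend $u\in C_c^\infty(\Omega)$ by zero to $\R^N$; integrating $-\frac{d}{dt}u(x+t\omega)$ in $t\in[0,\infty)$ and averaging over unit directions $\omega\in S^{N-1}$ yields the pointwise bound
$$|u(x)|\le \kappa(N)\int_\Omega \frac{|\nabla u(y)|}{|x-y|^{N-1}}\,\diff y =: \kappa(N)\,T(|\nabla u|)(x).$$
It then remains to show the Riesz-type integral operator $T$ is bounded on $L^{p(\cdot)}(\Omega)$. By the duality pairing in Proposition \ref{Holder ineq} together with Fubini's theorem, this follows once one verifies $\|Tg\|_{L^{p'(\cdot)}(\Omega)}\le C\|g\|_{L^{p'(\cdot)}(\Omega)}$. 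Because $\Omega$ is bounded, the kernel $|x-y|^{-(N-1)}$ satisfies $\int_\Omega |x-y|^{-(N-1)}\,\diff y\le c(\Omega,N)$ uniformly in $x$. Splitting the kernel as $|x-y|^{-(N-1)}=|x-y|^{-\alpha(N-1)}|x-y|^{-(1-\alpha)(N-1)}$ for a suitably chosen $\alpha\in(0,1)$ and invoking the variable-exponent H\"older inequality together with the norm-modular comparison in Proposition \ref{norm-modular} gives a modular estimate on $Tg$, hence a norm bound. Combining the pointwise bound with this operator bound gives the Poincar\'e inequality on $C_c^\infty(\Omega)$, and density then closes out the proof.

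The main obstacle is the $L^{p(\cdot)}$-boundedness of the Riesz-type operator $T$. In the full space this would typically require logarithmic H\"older continuity of $p$ (which is not assumed here), but on a bounded domain the uniform integrability of the kernel makes an elementary argument possible, using only the variable-exponent H\"older inequality together with the norm-modular equivalence. A simpler alternative avoiding Riesz potentials entirely is to place $\Omega\subset(0,L)^N$ and use $u(x_1,x')=\int_0^{x_1}\partial_1 u(t,x')\,\diff t$ together with Jensen's inequality (valid since $p^->1$) to obtain a modular bound; the exponent mismatch between $p(x)$ at the outer point and $p(t,x')$ at the inner integration point is then handled by splitting into the regions $\{|\partial_1 u|\le 1\}$ and $\{|\partial_1 u|>1\}$ and exploiting the bounds $p^-\le p(\cdot)\le p^+$. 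Either route reduces the problem to uniformly controllable modular integrals, so the main technical work is purely combinatorial bookkeeping rather than hard analysis.
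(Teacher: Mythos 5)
Your proposal has a genuine gap that affects both of the routes you sketch, and the flaw is the same in each: the exponent mismatch you identify is not, in fact, something that can be ``handled by splitting into the regions $\{|\partial_1 u|\le 1\}$ and $\{|\partial_1 u|>1\}$.'' Concretely, in the $1$D reduction the Jensen step leaves you needing to bound a quantity of the form
\begin{equation*}
\int \Bigl(\int_0^L w(t,x')^{p(x_1,x')}\,dx_1\Bigr)\,dt\,dx'
\quad\text{from}\quad
\int w(t,x')^{p(t,x')}\,dt\,dx'\le 1,
\end{equation*}
and on the set $\{w>1\}$ the comparison goes the wrong way whenever $p(x_1,x')>p(t,x')$: there you only get $w^{p(x_1,x')}\le w^{p^+}$, and $\int w^{p^+}$ is \emph{not} controlled by the modular $\int w^{p(\cdot)}\le 1$, since $p(\cdot)\le p^+$. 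Taking $w$ concentrated (and large) on a thin slab inside a region where $p$ is small, while $p$ is much larger on the complementary $x_1$-range, makes the left-hand quantity blow up while the right-hand modular stays at $1$, so the proposed intermediate estimate simply fails. (The Poincar\'e inequality itself still holds in such an example, so it is precisely the route that breaks, not the conclusion.) The Riesz-potential variant hits the same wall under the Schur-type kernel splitting: in a variable exponent the modular estimate again requires comparing $g(y)^{p(x)}$ with $g(y)^{p(y)}$, and on a bounded domain this is not innocuous. Indeed, boundedness of the Hardy--Littlewood maximal operator (and hence of the Riesz-type operator $T$) on $L^{p(\cdot)}$ genuinely requires \emph{local} log-H\"older regularity of $p$ even on bounded domains --- the Pick--R\r{u}\v{z}i\v{c}ka example lives on a cube --- so your claim that ``on a bounded domain the uniform integrability of the kernel makes an elementary argument possible'' is not correct. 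A minor further issue: you propose to prove $\|Tg\|_{L^{p'(\cdot)}}\le C\|g\|_{L^{p'(\cdot)}}$ and then pass to $L^{p(\cdot)}$ ``by duality,'' but $T$ is self-adjoint, so this is the same statement in disguise and gains nothing.

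The standard proof of this Poincar\'e inequality on a bounded domain (which is what \cite{FZ} gives; the paper here just cites it) does not go through singular integrals at all and needs only the continuity of $p$ on $\overline\Omega$. It factors through the compact embedding $W^{1,p(\cdot)}(\Omega)\hookrightarrow\hookrightarrow L^{p(\cdot)}(\Omega)$ (Proposition~\ref{Compact.Imb}(i) with $q=p<p^*$), which is proved by covering $\overline\Omega$ with finitely many small balls where the oscillation of $p$ is tiny, invoking the classical Rellich--Kondrachov theorem with constant exponents on each piece, and patching --- no log-H\"older is used. Granted this, argue by contradiction: if the inequality failed one could pick $u_n\in C_c^\infty(\Omega)$ with $\|u_n\|_{L^{p(\cdot)}(\Omega)}=1$ and $\|\nabla u_n\|_{L^{p(\cdot)}(\Omega)}\to 0$. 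Then $\{u_n\}$ is bounded in $W^{1,p(\cdot)}(\Omega)$, so up to a subsequence $u_n\to u$ in $L^{p(\cdot)}(\Omega)$, forcing $\|u\|_{L^{p(\cdot)}(\Omega)}=1$; meanwhile $\nabla u_n\to 0$ in $L^{p(\cdot)}(\Omega)$ gives $\nabla u=0$ in the distributional sense, so $u$ is constant, and since $u$ is the $W^{1,p(\cdot)}$-limit of functions in $C_c^\infty(\Omega)$ on a bounded domain (hence in $W_0^{1,1}(\Omega)$), the only possibility is $u\equiv 0$ --- contradicting $\|u\|_{L^{p(\cdot)}}=1$. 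Your density reduction at the outset is fine, but the core estimate you try to put behind it does not close without additional regularity on $p$; the compactness argument is the way to avoid that obstacle.
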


Next, we present a compact result that will be frequently utilized in the next sections. Let $t\in C_+(\overline\Omega)$ and $b:\Omega\to\R$ be measurable and positive a.e. in $\Omega$ such that $W_0^{1,p(\cdot)}(\Omega)\hookrightarrow L^{t(\cdot)}(b,\Omega)$. Let $m\in L^{\frac{t(\cdot)}{t(\cdot)-\sigma(\cdot)}}\big(b^{-\frac{\sigma}{t-\sigma}},\Omega\big)$ for some $\sigma\in C(\overline{\Omega })$ satisfying $1\leq \sigma (x)<t(x)$ for all $x\in \overline{\Omega }.$ Then, $m|u|^{\sigma}\in L^1(\Omega)$ for all $u\in W_0^{1,p(\cdot)}(\Omega)$ in view of Proposition~\ref{Holder ineq}. Furthermore, we have the following. 
%~~~~~~~~~~~~~~~~~~~~~~~~~~~~ COMPACT IMBEDDING ~~~~~~~~~~~~~~~~~~~~~~~~~~~~

\begin{proposition}\label{le.compact.imb}
	 Let $t,b$ and $\sigma$ be as above. Then, for $u_n \rightharpoonup  u$ in $W_0^{1,p(\cdot)}(\Omega)$ as $n\to\infty$, we have
	\begin{equation}\label{le.compact.imb.lim1}
		\int_\Omega m(x)|u_n|^{\sigma(x)}\diff x\to \int_\Omega m(x)|u|^{\sigma(x)}\diff x\ \ \text{as}\ n\to\infty,
	\end{equation}
	\begin{equation}\label{le.compact.imb.lim2}
		\int_\Omega |m(x)||u_n-u|^{\sigma (x)}\diff x\to 0\ \ \text{as}\ n\to\infty
	\end{equation}
and
\begin{equation}\label{le.compact.imb.lim3}
	\int_\Omega |m(x)||u_n|^{\sigma (x)-1}|u_n-u|\diff x\to 0\ \ \text{as}\ n\to\infty.
\end{equation}
Consequently, if $m\in L_+^{\frac{t(\cdot)}{t(\cdot)-\sigma(\cdot)}}\big(b^{-\frac{\sigma}{t-\sigma}},\Omega\big)$, then
$$W_0^{1,p(\cdot)}(\Omega)\hookrightarrow\hookrightarrow L^{\sigma(\cdot)}(m,\Omega).$$
\end{proposition}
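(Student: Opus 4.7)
The plan is to deduce all three limits from Vitali's convergence theorem, using a single Hölder-type estimate to establish uniform integrability and the compact embedding $W_0^{1,p(\cdot)}(\Omega) \hookrightarrow\hookrightarrow L^{p(\cdot)}(\Omega)$ from Proposition~\ref{Compact.Imb}(i) to extract pointwise convergence. The key factorization is
\[
|m(x)||v(x)|^{\sigma(x)} = \bigl(|m(x)|\, b(x)^{-\sigma(x)/t(x)}\bigr)\bigl(b(x)^{\sigma(x)/t(x)}|v(x)|^{\sigma(x)}\bigr),
\]
to which I would apply the variable-exponent Hölder inequality (Proposition~\ref{Holder ineq}) with conjugate exponents $\tfrac{t(\cdot)}{t(\cdot)-\sigma(\cdot)}$ and $\tfrac{t(\cdot)}{\sigma(\cdot)}$; both exponents lie in $C_+(\overline{\Omega})$ since $\sigma,t\in C(\overline{\Omega})$ with $1\leq\sigma<t$.

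Carrying this out on an arbitrary measurable $E\subset\Omega$ with $v=u_n$ or $v=u_n-u$ gives
\[
\int_E |m||v|^{\sigma(x)}\diff x \leq 2\bigl\||m| b^{-\sigma/t}\chi_E\bigr\|_{L^{t(\cdot)/(t(\cdot)-\sigma(\cdot))}(\Omega)}\, \bigl\|b^{\sigma/t}|v|^\sigma\bigr\|_{L^{t(\cdot)/\sigma(\cdot)}(\Omega)}.
\]
The second factor is controlled via Propositions~\ref{norm-modular} and \ref{norm-norm} by $\|v\|_{L^{t(\cdot)}(b,\Omega)}$, hence is uniformly bounded in $n$ because $\{u_n\}$ is bounded in $W_0^{1,p(\cdot)}(\Omega)\hookrightarrow L^{t(\cdot)}(b,\Omega)$. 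The first factor is arbitrarily small when $|E|$ is small, by absolute continuity of the integral of $|m|^{t/(t-\sigma)}b^{-\sigma/(t-\sigma)}\in L^1(\Omega)$. Together these furnish uniform integrability of $\{|m||u_n|^\sigma\}_n$ and $\{|m||u_n-u|^\sigma\}_n$. Extracting a subsequence along which $u_n\to u$ a.e.\ (from the compact embedding into $L^{p(\cdot)}(\Omega)$) and then invoking the standard sub--subsequence argument, Vitali's convergence theorem delivers \eqref{le.compact.imb.lim1} and \eqref{le.compact.imb.lim2} for the full sequence.

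For \eqref{le.compact.imb.lim3}, I would use the elementary pointwise bound
\[
|u_n|^{\sigma(x)-1}|u_n-u| \leq 2|u_n|^{\sigma(x)} + |u(x)|^{\sigma(x)},
\]
obtained from $|u_n-u|\leq|u_n|+|u|$ and Young's inequality (trivial when $\sigma(x)=1$), to transfer the uniform integrability established above to $\{|m||u_n|^{\sigma-1}|u_n-u|\}_n$; since $u_n\to u$ a.e.\ forces the integrand to tend to zero a.e., a second application of Vitali yields \eqref{le.compact.imb.lim3}. Finally, when $m>0$ a.e., \eqref{le.compact.imb.lim2} reads $\int_\Omega m|u_n-u|^{\sigma(x)}\diff x\to 0$, which by the modular--norm equivalence (Proposition~\ref{norm-modular}(iii)) is exactly $\|u_n-u\|_{L^{\sigma(\cdot)}(m,\Omega)}\to 0$, proving the compact embedding. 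The main obstacle is that we cannot use strong convergence in $L^{t(\cdot)}(b,\Omega)$ since the assumed embedding is only continuous; the slack is furnished precisely by the strict inequality $\sigma(x)<t(x)$, which places $|m|^{t/(t-\sigma)}b^{-\sigma/(t-\sigma)}$ in $L^1(\Omega)$ and thereby delivers the absolute continuity driving the uniform-integrability argument.
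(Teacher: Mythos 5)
Your argument for \eqref{le.compact.imb.lim1} and \eqref{le.compact.imb.lim2} matches the paper's proof essentially step for step: the same factorization $|m|\,b^{-\sigma/t}\cdot b^{\sigma/t}|v|^\sigma$, the same Hölder pairing with conjugate exponents $t/(t-\sigma)$ and $t/\sigma$, a uniform bound on the second factor from the continuous embedding into $L^{t(\cdot)}(b,\Omega)$, absolute continuity of $|m|^{t/(t-\sigma)}b^{-\sigma/(t-\sigma)}\in L^1$ for the first, and a Vitali argument driven by a.e.\ convergence coming from the compact embedding. You are slightly more explicit than the paper about passing back from a subsequence to the full sequence, which is a good habit.

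Where you genuinely diverge is in \eqref{le.compact.imb.lim3}. The paper treats it as a corollary of \eqref{le.compact.imb.lim1}--\eqref{le.compact.imb.lim2}: it splits the weight as $|m|^{(\sigma-1)/\sigma}\cdot|m|^{1/\sigma}$, applies the variable-exponent Hölder inequality with exponents $\frac{\sigma(\cdot)}{\sigma(\cdot)-1}$ and $\sigma(\cdot)$, and then converts the resulting norms to the modulars $\int|m||u_n|^{\sigma}$ and $\int|m||u_n-u|^{\sigma}$ via Propositions \ref{norm-modular}--\ref{norm-norm}, which tend to the right things by the first two limits. You instead prove \eqref{le.compact.imb.lim3} directly by the pointwise Young-type domination
$|u_n|^{\sigma(x)-1}|u_n-u|\le 2|u_n|^{\sigma(x)}+|u|^{\sigma(x)}$
and a fresh Vitali pass, piggy-backing on the uniform integrability already established. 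Both are correct; your route is a bit more self-contained and handles the borderline case $\sigma(x)=1$ transparently (where the paper's Hölder exponent $\frac{\sigma}{\sigma-1}$ degenerates, though in their applications $\sigma=s$ lies in $C_+(\overline\Omega)$ so $\sigma^->1$), whereas the paper's route makes the logical dependence of \eqref{le.compact.imb.lim3} on \eqref{le.compact.imb.lim1}--\eqref{le.compact.imb.lim2} explicit. One small imprecision: for the compact embedding at the end you cite Proposition~\ref{norm-modular}(iii), but strictly you first need to observe that the modular tends to $0$, hence is eventually $<1$, before (iii) applies; this is cosmetic and does not affect correctness.
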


\begin{proof}  Clearly, \eqref{le.compact.imb.lim1} follows if we can prove that
	\begin{equation}\label{PL.compact.imb.lim1}
		\int_\Omega |v_n| \diff x\to 0\ \ \text{as}\ n\to\infty,
	\end{equation}
	where $v_n:=m\left(|u_n|^{\sigma(x)}-|u|^{\sigma(x)}\right).$ To this end, we first note that up to a subsequence, $v_n\to 0$ a.e. in $\Omega$ as $n\to\infty.$ Then, by the Vitali convergence theorem it suffices to show that for a given and arbitrary $\epsilon>0$,  there exists $\delta=\delta(\epsilon)>0$ such that for any measurable subset $Q\subset\Omega$ with $\operatorname{meas} (Q)<\delta$,
	\begin{equation}\label{PL.v_n}
		\int_{Q}|v_n|\diff x <\epsilon.
	\end{equation}
	Indeed, let $\epsilon>0$ be given and arbitrary. Since $\{u_n\}_{n=1}^\infty$ is bounded in $W_0^{1,p(\cdot)}(\Omega)$ we have that $\{u_n\}_{n=1}^\infty$ is bounded in $L^{t(\cdot)}(b,\Omega)$ and hence
	\begin{equation}\label{PL.compact.defM}
		M:=\epsilon+2\left[2+\|u\|^{\sigma^+}_{L^{t(\cdot)}(b,\Omega)}+\sup_{n\in\N}\|u_n\|^{\sigma^+}_{L^{t(\cdot)}(b,\Omega)}\right]\in (0,\infty).
	\end{equation}
	On the other hand, the assumption $m\in L^{\frac{t(\cdot)}{t(\cdot)-\sigma(\cdot)}}\big(b^{-\frac{\sigma}{t-\sigma}},\Omega\big)$ implies that there exist $\delta=\delta(\epsilon)>0$ such that for any measurable subset $Q\subset\Omega$ with $\operatorname{meas} (Q)<\delta$, 
	\begin{equation}\label{PL.compact.delta}
		\int_{Q}\left|m(x)b(x)^{-\frac{\sigma(x)}{t(x)}}\right|^{\frac{t(x)}{t(x)-\sigma(x)}}\diff x<\left(\frac{\epsilon}{M}\right)^{\left(\frac{t}{t-\sigma}\right)^+}(<1).
	\end{equation}
	Invoking Proposition~\ref{norm-modular} and using \eqref{PL.compact.defM}-\eqref{PL.compact.delta} we have
	\begin{align*}
		\int_{Q}|v_n|\diff x& =\int_{Q} |m(x)|\big||u_n|^{\sigma(x)}-|u|^{\sigma(x)}\big|\diff x\\
		&\leq 2\|mb^{-\frac{\sigma}{t}}\|_{L^{\frac{t(\cdot)}{t(\cdot)-\sigma(\cdot)}}(Q)}\big\|b||u_n|^{\sigma}-|u|^{\sigma}|\big\|_{L^{\frac{t(\cdot)}{\sigma(\cdot)}}(Q)}\notag\\
		&\leq 2\|mb^{-\frac{\sigma}{t}}\|_{L^{\frac{t(\cdot)}{t(\cdot)-\sigma(\cdot)}}(Q)}\left[2+\|u_n\|^{\sigma^+}_{L^{t(\cdot)}(b,Q)}+\|u\|^{\sigma^+}_{L^{t(\cdot)}(b,Q)}\right]\notag\\
		&\leq M\left(\int_{Q}\left|m(x)b(x)^{-\frac{\sigma(x)}{t(x)}}\right|^{\frac{t(x)}{t(x)-\sigma(x)}}\diff x\right)^{\frac{1}{\left(\frac{t}{t-\sigma}\right)^+}}\notag\\
		&<\epsilon,\ \forall n\in\mathbb{N}.  
	\end{align*}
	That is, we have proved \eqref{PL.v_n}, and hence, in turn we obtain \eqref{PL.compact.imb.lim1} and  \eqref{le.compact.imb.lim1}. The proof for \eqref{le.compact.imb.lim2} is similar and we omit it. To see \eqref{le.compact.imb.lim3}, by Propositions~\ref{Holder ineq} and \ref{norm-modular} we have
	\begin{align*}
	\int_\Omega &|m(x)||u_n|^{\sigma (x)-1}|u_n-u|\diff x\\
	&\leq 2 \left\||m|^{\frac{\sigma-1}{\sigma}}|u_n|^{\sigma-1}\right\|_{L^{\frac{\sigma(\cdot)}{\sigma(\cdot)-1}}(\Omega)}\left\||m|^{\frac{1}{\sigma}}|u_n-u|\right\|_{L^{\sigma(\cdot)}(\Omega)}\\
	&\leq 2 \left[1+\left(\int_\Omega |m(x)||u_n|^{\sigma(x)}\diff x\right)^{\left(\frac{\sigma-1}{\sigma}\right)^+}\right]\times\\
	&\quad\quad\quad\times \max\left\{\left(\int_\Omega |m(x)||u_n-u|^{\sigma(x)}\diff x\right)^{\frac{1}{\sigma^+}},\left(\int_\Omega |m(x)||u_n-u|^{\sigma(x)}\diff x\right)^{\frac{1}{\sigma^-}}\right\}.
	\end{align*}
Combining this with \eqref{le.compact.imb.lim1} and \eqref{le.compact.imb.lim2} we obtain \eqref{le.compact.imb.lim3}.
\end{proof}

We conclude this section with a concentration-compactness principle for variable exponent spaces. Let $p\in C^{0, \frac{1}{|\log t|}}(\overline{\Omega})$ with $p^+<N$  and let $t\in C_+(\overline{\Omega})$ with $p(x)<t(x)\leq p^\ast(x)$ for all $x\in\overline{\Omega}$ such that 
$$\mathcal{C}:=\{x\in\overline{\Omega}:\, t(x)=p^\ast(x)\}\ne\emptyset.$$
Let $b\in L^\infty_+(\Omega)$. Then, it follows from \eqref{cri.imb} that
\begin{equation}\label{Sb}
	S_b:=\inf_{u\in W_0^{1,p(\cdot)}(\Omega)\setminus\{0\} }\frac{\|u\|}{\norm{u}_{L^{t(\cdot)}(b,\Omega)}}\in (0,\infty).
\end{equation}
Let $\mathcal{M}(\overline{\Omega})$ denote the space of Radon measures on $\overline{\Omega}$; that is, the dual of $C(\overline{\Omega})$. By the Riesz representation theorem, for each $\mu\in \mathcal{M}(\overline{\Omega}),$ there is a unique signed Borel measure on $\overline{\Omega}$, still denoted by $\mu$, such that
$$\langle \mu,f\rangle=\int_{\overline{\Omega}}f\diff\mu,\quad \forall f\in C(\overline{\Omega}).$$
We identify $L^1(\Omega)$ with a subspace of $\mathcal{M}(\overline{\Omega})$ through the imbedding
$T:\ L^1(\Omega)\to \mathcal{M}(\overline{\Omega})$  defined by 
$$\langle Tu,f\rangle=\int_{\Omega}uf\diff x, \ \ \forall u\in L^1(\Omega), \ \forall f\in C(\overline{\Omega})$$
(see e.g., \cite[p. 116]{Brezis-book}). The next theorem is the concentration-compactness principle for variable exponent spaces, which plays a key role in our arguments in the next sections.

%======================STATEMENT OF THE CCP=========================%
\begin{theorem}[cf. \cite{Bonder,HS.NA2016}]\label{T.CCP}
	Let $\{u_n\}_{n=1}^\infty$ be a bounded sequence in
	$W_0^{1,p(\cdot)}(\Omega)$  such that
	\begin{eqnarray*}
		u_n &\rightharpoonup& u \quad \text{in}\quad  W_0^{1,p(\cdot)}(\Omega),\\
		|\nabla u_n|^{p(x)} &\overset{\ast }{\rightharpoonup }&\mu\quad \text{in}\quad \mathcal{M}(\overline{\Omega}),\\
		b|u_n|^{t(x)}&\overset{\ast }{\rightharpoonup }&\nu\quad \text{in}\quad \mathcal{M}(\overline{\Omega}).
	\end{eqnarray*}
	Then, there exist $\{x_i\}_{i\in \mathcal{I}}\subset \mathcal{C}$ of distinct points and $\{\nu_i\}_{i\in \mathcal{I}}, \{\mu_i\}_{i\in \mathcal{I}}\subset (0,\infty),$ where $\mathcal{I}$ is at most countable, such that
	\begin{gather*}
		\nu=b|u|^{t(x)} + \sum_{i\in \mathcal{I}}\nu_i\delta_{x_i},\label{CCP.form.nu}\\
		\mu \geq |\nabla u|^{p(x)} + \sum_{i\in \mathcal{I}} \mu_i \delta_{x_i},\label{CCP.form.mu}\\
		S_b \nu_i^{1/t(x_i)} \leq \mu_i^{1/p(x_i)}, \quad \forall i\in \mathcal{I}.\label{CCP.nu_mu}
	\end{gather*}
\end{theorem}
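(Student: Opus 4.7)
\smallskip

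\noindent\textbf{Proof plan.} The strategy is the classical P.-L.~Lions concentration--compactness scheme, adapted to variable exponents as in the cited works. Set $v_n := u_n-u$, so that $v_n \rightharpoonup 0$ in $W_0^{1,p(\cdot)}(\Omega)$. The first task is to reduce both weak-$*$ limit measures to the ``loss-of-mass at infinity in the spectrum'' parts. By Proposition~\ref{Compact.Imb}(i), the embedding $W_0^{1,p(\cdot)}(\Omega)\hookrightarrow\hookrightarrow L^{q(\cdot)}(\Omega)$ is compact for every $q\in C_+(\overline{\Omega})$ with $q(x)<p^*(x)$, so $v_n\to 0$ strongly in such spaces. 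Combining this with the Brezis--Lieb type identity (applied to the Carath\'eodory function $b|s|^{t(x)}$, using $u_n\to u$ a.e.\ up to a subsequence and the uniform bound of $\{u_n\}$ in $L^{t(\cdot)}(b,\Omega)$), I obtain
\begin{equation*}
b|u_n|^{t(x)}-b|v_n|^{t(x)}-b|u|^{t(x)}\to 0\quad\text{in }L^1(\Omega),
\end{equation*}
hence $\widetilde{\nu}:=\nu-b|u|^{t(\cdot)}$ is a nonnegative Radon measure on $\overline{\Omega}$ and $b|v_n|^{t(x)}\overset{*}{\rightharpoonup}\widetilde{\nu}$. Analogously, by weak lower semicontinuity of the convex modular, $\widetilde{\mu}:=\mu-|\nabla u|^{p(\cdot)}\geq 0$ as a measure, and $|\nabla v_n|^{p(x)}\overset{*}{\rightharpoonup}\widetilde\mu'$ with $\widetilde\mu'\leq\widetilde\mu$.

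\smallskip

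\noindent\textbf{Reverse H\"older between $\widetilde{\mu}$ and $\widetilde{\nu}$.} The core step is to test the critical Sobolev inequality \eqref{Sb} against $\phi v_n$ for arbitrary $\phi\in C_c^\infty(\mathbb{R}^N)$:
\begin{equation*}
S_b\,\|\phi v_n\|_{L^{t(\cdot)}(b,\Omega)}\leq \|\nabla(\phi v_n)\|_{L^{p(\cdot)}(\Omega)}\leq \||\nabla\phi|\,v_n\|_{L^{p(\cdot)}(\Omega)}+\|\phi\,|\nabla v_n|\|_{L^{p(\cdot)}(\Omega)}.
\end{equation*}
The first term on the right vanishes in the limit by the compact subcritical embedding. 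Passing $n\to\infty$ and using the modular--norm equivalence (Proposition~\ref{norm-modular}) together with $\phi v_n\to 0$ in $L^{p(\cdot)}(\Omega)$, I get
\begin{equation*}
S_b\,\|\phi\|_{L^{t(\cdot)}(\widetilde\nu)}\leq \|\phi\|_{L^{p(\cdot)}(\widetilde\mu)}\quad\text{for all }\phi\in C_c^\infty(\mathbb{R}^N),
\end{equation*}
where the right-hand side actually uses $\widetilde\mu'\leq\widetilde\mu$. This is the variable-exponent analogue of Lions' reverse H\"older inequality.

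\smallskip

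\noindent\textbf{Atomicity and localization in $\mathcal{C}$.} From the reverse H\"older inequality I deduce that $\widetilde\nu$ is purely atomic and that each atom of $\widetilde\nu$ must sit on the critical set $\mathcal{C}$. For a Lebesgue point $x_0\notin\mathcal{C}$ of $\widetilde\nu$ one has $t(x_0)<p^*(x_0)$, and by taking $\phi(x)=\psi((x-x_0)/\varepsilon)$ with $\psi\in C_c^\infty(B_1)$, $\psi\equiv 1$ on $B_{1/2}$, the scaling argument (analogous to the one in \cite{Bonder,HS.NA2016}) yields $\widetilde\nu(\{x_0\})=0$ in the non-atomic part and strict subcritical decay at regular points; the standard measure-theoretic lemma (writing $\widetilde\nu=\widetilde\nu_{\rm atom}+\widetilde\nu_{\rm diff}$ and testing against bump functions) then forces $\widetilde\nu_{\rm diff}\equiv 0$. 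Enumerating the atoms gives an at most countable set $\{x_i\}_{i\in\mathcal{I}}\subset\mathcal{C}$ with $\widetilde\nu=\sum_{i\in\mathcal{I}}\nu_i\delta_{x_i}$, $\nu_i>0$. Plugging $\phi=\phi_{i,\varepsilon}$ concentrated at $x_i$ into the reverse H\"older inequality and letting $\varepsilon\to 0^+$ (using continuity of $p$ and $t$ at $x_i$ and Proposition~\ref{norm-modular}) produces $\mu_i:=\widetilde\mu(\{x_i\})$ with $S_b\nu_i^{1/t(x_i)}\leq \mu_i^{1/p(x_i)}$. Finally, $\mu\geq|\nabla u|^{p(\cdot)}+\sum_i\mu_i\delta_{x_i}$ because the atomic part of $\widetilde\mu$ dominates $\sum_i\mu_i\delta_{x_i}$ while the absolutely continuous (w.r.t.\ Lebesgue) lower bound $|\nabla u|^{p(\cdot)}$ follows from Mazur-type lower semicontinuity of the $p(\cdot)$-modular on disjoint Borel sets.

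\smallskip

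\noindent\textbf{Main obstacle.} The principal technical point is the concentration test with $\phi_{i,\varepsilon}$: the variable-exponent norm fails to be homogeneous, so rescaling $(x-x_i)/\varepsilon$ does not interact cleanly with $\|\cdot\|_{L^{p(\cdot)}}$ as it does in Lions' original argument. Controlling this requires replacing norm estimates by modular estimates, exploiting log-H\"older continuity of $p$ to ensure $p(x)\to p(x_i)$ uniformly on shrinking balls, and using Proposition~\ref{norm-modular} to convert the limiting modular bounds back into the norm inequality $S_b\nu_i^{1/t(x_i)}\leq \mu_i^{1/p(x_i)}$.
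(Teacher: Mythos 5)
The paper does not prove Theorem~\ref{T.CCP}; it is stated with a ``cf.'' citation and the reader is referred to \cite{Bonder,HS.NA2016} for the argument. Your sketch follows exactly the Lions-style route developed in those references: the Brezis--Lieb splitting $\nu=b|u|^{t(\cdot)}+\widetilde\nu$, the reverse H\"older inequality $S_b\|\phi\|_{L^{t(\cdot)}(\widetilde\nu)}\le\|\phi\|_{L^{p(\cdot)}(\widetilde\mu)}$ obtained by testing the Sobolev inequality against $\phi v_n$, and the localization/atomicity step. So in that sense the approach matches the one the paper invokes.

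That said, as written this is a proof \emph{plan}, not a proof, and the two places where the real work sits are only signposted. First, you acknowledge but do not resolve the modular/norm interchange: the quantities that converge weak-$*$ are the modulars $\int b|v_n|^{t(x)}\varphi\,\diff x$ and $\int|\nabla v_n|^{p(x)}\varphi\,\diff x$, whereas the Sobolev inequality \eqref{Sb} is a norm inequality; converting one into the other on shrinking balls is exactly where log-H\"older continuity of $p$ is consumed, via the freezing $p(x)\approx p(x_i)$ and Proposition~\ref{norm-modular}. Second, the ``pure atomicity of $\widetilde\nu$ and confinement of atoms to $\mathcal{C}$'' is compressed into a clause and the phrasing (``scaling argument'', ``strict subcritical decay'') is not quite how the argument actually runs: the cleaner route is to observe that if $x_0\notin\mathcal{C}$ then $t<p^\ast$ on a neighborhood $B'$ of $x_0$, whence $W^{1,p(\cdot)}(B')\hookrightarrow\hookrightarrow L^{t(\cdot)}(B')$ forces $\widetilde\nu|_{B'}=0$ by Proposition~\ref{Compact.Imb}(i), and separately Lions' Lemma~I.1 type argument (using the uniform gap $t^- - p^+>0$) yields the countable atomic decomposition of $\widetilde\nu$. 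Neither step fails, but neither is carried out, so what you have is an accurate outline of the proof in \cite{Bonder,HS.NA2016} rather than an independent demonstration.
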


\vspace{0.3cm}
\noindent\textbf{Other notations:} 
\begin{itemize}
	\item [{}] $E$: the space $W_0^{1,p(\cdot)}(\Omega)$ endowed with the norm $\|u\|:=\|\nabla u\|_{L^{p(\cdot)}(\Omega)}$
	\item [{}]$E^*$: the dual of $E$
	\item[{}] $\langle \cdot,\cdot\rangle$: the duality pairing between a Banach space (from the context) and its dual
	\item[{}] $B_\rho$: the open ball in $E$ centered at $0$ with radius $\rho$.
	\item[{}] $B_\rho(x_0)$: the open ball in $\R^N$ centered at $x_0$ with radius $\rho$
	%\item[{}] $\gamma (A)$: the genus of a symmetric closed subset $A\subset E\setminus\{0\}$ 
	\item[{}] $|Q|$: the Lebesgue measure of $Q\subset\R^N$
	%\item[{}] $S$:  the imbedding constant for the Sobolev imbedding $W_0^{1,p(\cdot)}(\Omega)\hookrightarrow L^{t(\cdot)}(\Omega)$, namely, 
	%\begin{equation}\label{S}
	%	\|u\|_{p^\ast(\cdot)}\leq S\|u\|,\ \ \forall u\in W_0^{1,p(\cdot)}(\Omega).
	%\end{equation}
\end{itemize} 

\section{The locally $p(\cdot)$-sublinear and locally $p(\cdot)$-superlinear cases}\label{sub-super}
In this section, we will prove Theorems~\ref{Theo.Sublinear} and \ref{Theo.Superlinear} by employing variants of the Mountain Pass Theorem.

%##################### SUBSECTION 2.1 ABSTRACT WORKS #####################%

\subsection{The locally $p(\cdot)$-sublinear case}

The existence of a sequence of small solutions to problem~\eqref{Eq.General} for the locally $p(\cdot)$-sublinear case, Theorem~\ref{Theo.Sublinear}, can be obtained easily by adapting \cite[Proof of Theorem 1.1]{HHS}, which employed a critical theorem obtained in \cite{Kaj.2006}.
\begin{proof} [\textbf{Proof of Theorem~\ref{Theo.Sublinear}}] Let $\lambda>0$ and $\theta\geq 0$. Consider the problem
	\begin{eqnarray*}%\label{1.1'}
		\begin{cases}
			-\operatorname{div}a(x,\nabla u)= \widetilde{f}(x,u) \quad &\text{in } \Omega ,\\
			u=0\quad &\text{on } \partial \Omega,
		\end{cases}
	\end{eqnarray*}
where $\widetilde{f}(x,u):=\lambda f(x,u)+\theta b(x)|u|^{t(x)-2}u$. Note that $\widetilde{f}$ also satisfies $(\textup{F}1)$ and $(\textup{F}2)$ in place of $f$ and hence, by repeating the argument used in \cite[Proof of Theorem 1.1]{HHS} in which we apply \cite[Theorem 1.2]{HNT} instead of \cite[Proposition A.1]{HHS}, the desired conclusion follows.
	\end{proof}
\begin{remark}\rm It is clear that Theorem~\ref{Theo.Sublinear} remains valid for any $t\in C_+(\overline{\Omega})$ and therefore, we do not need to assume $p\in C^{0, \frac{1}{|\log t|}}(\overline{\Omega})$ when $p^+<N$ in this theorem.
\end{remark}
%##################### SUBSECTION 2.2 PROOF OF THEOREM 1 #####################%

\subsection{The locally $p(\cdot)$-superlinear case}\label{Subsec3.2}

In this subsection, we will prove Theorem~\ref{Theo.Superlinear} by using the idea of \cite{Sil-Xav.2003}. We will restrict our proof to the case $p^+<N$ since the proof for the case $p^+\geq N$ is similar and simpler. We will make use of the following abstract result for symmetric $C^1$ functionals.
\begin{proposition}[\cite{Sil-Xav.2003}]\label{prop.abs}
	Let $X = V \oplus W$, where $X$ is a real Banach space and $V$ is finite dimensional.
	Suppose that $I \in C^1(X,\mathbb{R})$ is an even functional satisfying $I(0) = 0$ and
	\begin{itemize}
		\item [$(\textup{I}1)$] there exist constants $\rho,\ \beta > 0$ such that $I(u)\geq \beta$ for all $u\in\partial B_\rho\cap W;$
		\item [$(\textup{I}2)$] there exist a subspace $\widetilde{X}$ of $X$ and $L>0$ such that $\operatorname{dim} V < \operatorname{dim} \widetilde{X}<\infty$ and $\underset{u\in \widetilde{X}}{\sup}\, I(u)<L$;
		\item [$(\textup{I}3)$] considering $L$ given by $(\textup{I}2)$, $I$ satisfies the $\textup{(PS)}_c$ condition for any  $c\in [0,L]$.
	\end{itemize}
	Then $I$ possesses at least $\operatorname{dim} \widetilde{X}-\operatorname{dim} V$ pairs of nontrivial critical points.
\end{proposition}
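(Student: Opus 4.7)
The plan is to construct, via Krasnoselskii genus theory combined with a Benci-type pseudoindex, a family of $m-n$ minimax critical values in the window $[\beta,L)$, where $m:=\dim\widetilde{X}$ and $n:=\dim V$. Let $P\colon X\to V$ denote the continuous projection associated with the decomposition $X=V\oplus W$, let $\Sigma$ be the family of closed symmetric subsets of $X\setminus\{0\}$, and let $\gamma$ be the Krasnoselskii genus on $\Sigma$. The natural linking sphere is $S:=\partial B_\rho\cap W$, on which $I\ge\beta$ by $(\textup{I}1)$.

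I would introduce a pseudoindex in the spirit of Benci--Rabinowitz: for $A\in\Sigma$, set $i^*(A):=\min_{h\in\mathcal{H}}\gamma\bigl(h(A)\cap S\bigr)$, where $\mathcal{H}$ is a carefully chosen class of odd continuous self-maps of $X$ (typically odd homeomorphisms that leave sublevel sets of $I$ invariant and fix $0$). For each $k\in\{1,\ldots,m-n\}$, define $\mathcal{A}_k:=\{A\in\Sigma:A\text{ compact},\,i^*(A)\ge k\}$ and $c_k:=\inf_{A\in\mathcal{A}_k}\sup_{u\in A}I(u)$. The upper bound $c_k<L$ is obtained by testing with the sphere $S_r:=\{u\in\widetilde{X}:\|u\|=r\}$: since $\dim\widetilde{X}=m$ one has $\gamma(S_r)=m$, and because $P$ restricted to $\widetilde{X}$ maps into the $n$-dimensional space $V$ the standard genus inequality $\gamma(A\cap\ker P)\ge\gamma(A)-n$ yields $i^*(S_r)\ge m-n\ge k$, so that $S_r\in\mathcal{A}_k$ and $c_k\le\sup_{\widetilde{X}}I<L$ by $(\textup{I}2)$. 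The lower bound $c_k\ge\beta$ follows from the defining property $i^*(A)\ge1\Rightarrow h(A)\cap S\ne\emptyset$ for every admissible $h$, combined with $(\textup{I}1)$.

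The remainder is the standard Krasnoselskii multiplicity machinery: using the $(\textup{PS})_c$ condition from $(\textup{I}3)$ and the evenness of $I$, the equivariant deformation lemma shows that each $c_k\in[\beta,L)$ is a critical value. If the $c_k$ are pairwise distinct, they directly furnish $m-n$ pairs of nontrivial critical points. Otherwise, whenever a cluster $c_k=c_{k+1}=\cdots=c_{k+j}$ occurs, the classical genus argument shows that the compact symmetric critical set at that level has genus at least $j+1$ and therefore contains at least $j+1$ distinct pairs; summing over clusters one recovers the total count of $m-n$ pairs.

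The main obstacle is the correct design of the pseudoindex $i^*$ (equivalently, of the admissible class $\mathcal{A}_k$) so that both the linking lower bound $c_k\ge\beta$ and the compression upper bound $c_k<L$ can hold simultaneously: the class must be small enough that every admissible set equivariantly meets the barrier sphere $S$, yet large enough to contain the test sphere $S_r\subset\widetilde{X}$. Once this balance is arranged, everything else reduces to standard genus estimates and the equivariant deformation lemma under $(\textup{PS})_c$, as axiomatized in the Silva--Xavier framework.
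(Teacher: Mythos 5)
The paper does not prove this proposition; it is cited directly from Silva--Xavier \cite{Sil-Xav.2003}, so there is no in-paper argument to compare against. Your overall strategy --- set up a genus/pseudoindex minimax family $c_1\le\cdots\le c_{m-n}$ with $m=\dim\widetilde X$, $n=\dim V$, sandwich each level in $[\beta,L)$, then run the equivariant deformation lemma and the clustering-by-genus count --- is the right one and is in the spirit of the abstract framework behind \cite{Sil-Xav.2003}.

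There is, however, a genuine gap in the step certifying the upper bound. You test with the sphere $S_r=\{u\in\widetilde X:\|u\|=r\}$ and want $i^*(S_r)\ge m-n$, invoking a Bourgin--Yang type inequality $\gamma(A\cap\ker P)\ge\gamma(A)-n$. That inequality controls $\gamma(h(S_r)\cap W)$, but what you actually need is $\gamma\bigl(h(S_r)\cap\partial B_\rho\cap W\bigr)$, and the passage from the subspace $W$ to the barrier sphere $\partial B_\rho\cap W$ is exactly where the argument breaks: a sphere $S_r$ does not topologically link with $\partial B_\rho\cap W$, and an odd homeomorphism $h$ with $h(0)=0$ (even one arising from a pseudogradient flow preserving sublevel sets of $I$) can carry $S_r$ entirely inside $B_\rho$, forcing $h(S_r)\cap\partial B_\rho=\emptyset$ and $i^*(S_r)=0$. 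What does link is the solid ball $\overline{B_R}\cap\widetilde X$ for suitable $R>\rho$; the correct intersection lemma (of the type of Rabinowitz's Proposition 9.23 in \cite{Rab1986}) gives $\gamma\bigl(h(\overline{B_R}\cap\widetilde X)\cap\partial B_\rho\cap W\bigr)\ge m-n$ for admissible odd $h$, and its proof is a degree/Borsuk argument on the ball, not a genus-subtraction on a sphere. Since the ball contains $0$ one cannot feed it into $i^*$ directly; the standard fix is Rabinowitz's: take $\Gamma_k:=\{h(\overline{B_R}\cap\widetilde X\setminus Y):h\ \text{odd admissible},\ Y\in\Sigma,\ \gamma(Y)\le m-k\}$ and $c_k:=\inf_{B\in\Gamma_k}\sup_B I$. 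This is precisely the ``careful design'' you flag at the end as the main obstacle, but your sphere test would fail, so the sketch as written does not close. With the ball-and-linking lemma in place, the remaining steps (lower bound from $(\textup{I}1)$, upper bound from $(\textup{I}2)$, criticality and multiplicity from $(\textup{I}3)$ and evenness) do go through in the standard way.
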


%\end{remark}\eqref{Eq.General}
To determine solutions to problem \eqref{Eq.General}, we will apply Proposition~\ref{prop.abs} for $X= E$ and  $I:\, E \to\mathbb{R}$ defined as 
\begin{equation}\label{def.I}
	I(u):=\int_\Omega A(x,\nabla u)\diff x-\lambda\int_\Omega F(x,u)\diff x-\theta\int_\Omega \frac{b(x)}{t(x)}|u|^{t(x)}\diff x,\ u\in E.
\end{equation}
It is clear that  under the assumption of Theorem~\ref{Theo.Superlinear}, $I$ is of class $C^1$ and  a critical point of $I$ is a weak solution to problem \eqref{Eq.General}. Moreover, $I$ is even on $E$ and $I(0)=0.$ 
In order to verify condition $(\textup{I}3)$ of Proposition~\ref{prop.abs}, we need the following lemma. 
\begin{lemma} \label{le.ps1}
	Let $\textup{(P)}$, $\textup{(F4)}$ and $\textup{(F6)}$ hold. Then, for $\lambda\in (0,\mu_1]$ and $\theta>0$, $I$ satisfies the $\textup{(PS)}_c$ condition for all $c\in\mathbb{R}$ satisfying
		\begin{equation}\label{PS_c1}
		c<-\frac{\|e\|_{L^1(\Omega)}\lambda}{\alpha}+\left(\frac{1}{\alpha}-\frac{1}{t^-}\right)S_b^N\min\left\{\theta^{-\frac{1}{h^+-1}},\theta^{-\frac{1}{h^--1}}\right\}
		\end{equation}
		with $e$, $\alpha$ given by $\textup{(F6)}$, $S_b$ given by \eqref{Sb} and $h(x):=\frac{t(x)}{p(x)}$ for $x\in\overline{\Omega }$.
		
\end{lemma}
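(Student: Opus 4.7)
The plan is to take a $(\textup{PS})_c$ sequence $\{u_n\}\subset E$ and show it has a strongly convergent subsequence, in three steps: boundedness, exclusion of atomic concentration via Theorem~\ref{T.CCP}, and Browder--Minty-type passage to the limit using $(\textup{A4})$. For boundedness I form $\alpha I(u_n)-\langle I'(u_n),u_n\rangle$: $(\textup{A3})$ with $\alpha\geq p^+$ gives the operator contribution a lower bound $\frac{\alpha-p^+}{p^+}\int_\Omega|\nabla u_n|^{p(x)}\diff x$; $\textup{(F6)}$ bounds the subcritical residue below by $-\lambda\frac{\alpha-p^+}{p^+}\int_\Omega w|u_n|^{p(x)}\diff x-\lambda\|e\|_{L^1(\Omega)}$; and the critical contribution $\theta\int_\Omega b(\alpha/t(x)-1)|u_n|^{t(x)}\diff x$ is nonnegative since $\alpha<t^-$. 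Condition $\textup{(P)}$ with $\lambda\leq\mu_1$ absorbs the $w$-term into the gradient term (by density of $C_c^\infty(\Omega)$ in $E$), leaving $\|u_n\|$ bounded.

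Next, extract $u_n\rightharpoonup u$ in $E$ with $|\nabla u_n|^{p(x)}\overset{*}{\rightharpoonup}\mu$ and $b|u_n|^{t(x)}\overset{*}{\rightharpoonup}\nu$ in $\mathcal{M}(\overline{\Omega})$, so Theorem~\ref{T.CCP} yields atoms $\{x_i\}_{i\in\mathcal{I}}\subset\mathcal{C}$ satisfying $S_b\nu_i^{1/t(x_i)}\leq\mu_i^{1/p(x_i)}$. To show $\mathcal{I}=\emptyset$, fix any $i_0\in\mathcal{I}$ and test $I'(u_n)$ against $\phi_\epsilon u_n$, where $\phi_\epsilon\in C_c^\infty(\R^N)$ is a standard cut-off concentrated at $x_{i_0}$ with $\|\nabla\phi_\epsilon\|_{L^\infty(\R^N)}\leq 2/\epsilon$. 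Passing $n\to\infty$ first and $\epsilon\to 0$ second---using $(\textup{A3})$ to control $a(x,\nabla u_n)\cdot\nabla u_n$ from below by $|\nabla u_n|^{p(x)}$, Proposition~\ref{le.compact.imb} to dispose of the subcritical nonlinearity, and \eqref{A2''} with a H\"older estimate on $u\nabla\phi_\epsilon$ in $L^{p(\cdot)}$ to eliminate the boundary term $\int_\Omega a(x,\nabla u_n)\cdot\nabla\phi_\epsilon\,u_n\diff x$---one obtains $\mu_{i_0}\leq\theta\nu_{i_0}$. Substituting this into the CCP inequality and using $t(x_{i_0})=p^*(x_{i_0})$, elementary algebra gives $\theta\nu_{i_0}\geq S_b^N\,\theta^{-1/(h(x_{i_0})-1)}\geq S_b^N\min\{\theta^{-1/(h^+-1)},\theta^{-1/(h^--1)}\}$. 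Taking the $n\to\infty$ limit in the boundedness identity and retaining only this atom yields $c\geq-\frac{\lambda\|e\|_{L^1(\Omega)}}{\alpha}+\bigl(\frac{1}{\alpha}-\frac{1}{t^-}\bigr)\theta\nu_{i_0}$, which combined with the preceding lower bound contradicts \eqref{PS_c1}. Thus $\mathcal{I}=\emptyset$ and $\int_\Omega b|u_n|^{t(x)}\diff x\to\int_\Omega b|u|^{t(x)}\diff x$.

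With convergence of the critical term in hand, $\langle I'(u_n),u_n-u\rangle\to 0$ combined with the strong convergence of the subcritical term from Proposition~\ref{le.compact.imb} reduces matters to $\int_\Omega[a(x,\nabla u_n)-a(x,\nabla u)]\cdot\nabla(u_n-u)\diff x\to 0$; the monotonicity $(\textup{A4})$ together with the standard $(S_+)$ argument for variable exponent spaces then delivers $u_n\to u$ in $E$. The main obstacle is the atom-exclusion step: rigorously disposing of the boundary term in the variable-exponent setting, where H\"older's inequality for $L^{p(\cdot)}$ interacts delicately with the $\epsilon$-scaling of $\phi_\epsilon$, and then performing the algebraic manipulation that converts $\mu_i\leq\theta\nu_i$ together with $S_b\nu_i^{1/t(x_i)}\leq\mu_i^{1/p(x_i)}$ into the precise threshold $S_b^N\min\{\theta^{-1/(h^+-1)},\theta^{-1/(h^--1)}\}$ at points where $t(x_i)=p^*(x_i)$.
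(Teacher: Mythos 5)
Your boundedness step has a genuine gap. You claim that once $\textup{(P)}$ with $\lambda\leq\mu_1$ absorbs the $w$-term into the gradient term, we are ``leaving $\|u_n\|$ bounded,'' but that one-step conclusion fails at the boundary of the allowed parameter range. After absorption, the coercivity identity reads
\begin{equation*}
\alpha(c+1)+o(1)\|u_n\|\;\geq\;\frac{(\mu_1-\lambda)(\alpha-p^+)}{\mu_1 p^+}\int_\Omega|\nabla u_n|^{p(x)}\diff x-\lambda\|e\|_{L^1(\Omega)}+\theta\int_\Omega b\Bigl(1-\tfrac{\alpha}{t(x)}\Bigr)|u_n|^{t(x)}\diff x.
\end{equation*}
The gradient coefficient $\frac{(\mu_1-\lambda)(\alpha-p^+)}{\mu_1 p^+}$ vanishes whenever $\lambda=\mu_1$ or $\alpha=p^+$, and both of these are explicitly permitted by the hypotheses ($\lambda\in(0,\mu_1]$ in the lemma statement, $\alpha\in[p^+,t^-)$ in $\textup{(F6)}$). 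In those cases the coercivity identity controls only $\int_\Omega b|u_n|^{t(x)}\diff x$, not $\|u_n\|$; dropping the nonnegative gradient term, which is all the paper does, you obtain merely $\int_\Omega b|u_n|^{t(x)}\diff x\leq C(1+\|u_n\|)$. Closing the boundedness requires a second, independent inequality coming from the energy identity $I(u_n)\to c$ itself: by $(\textup{A3})$ and Proposition~\ref{norm-modular},
$\tfrac{1}{p^+}\bigl(\|u_n\|^{p^-}-1\bigr)\leq I(u_n)+\lambda\int_\Omega F(x,u_n)\diff x+\theta\int_\Omega\tfrac{b}{t(x)}|u_n|^{t(x)}\diff x$, and then $\textup{(F4)}$ together with a Young-inequality estimate of $a_j|u_n|^{r_j(x)}$ against $b|u_n|^{t(x)}$ gives $\|u_n\|^{p^-}\leq C'\bigl(1+\int_\Omega b|u_n|^{t(x)}\diff x\bigr)$. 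Chaining the two inequalities and using $p^->1$ delivers boundedness. You cite $\textup{(F4)}$ in the hypotheses but never deploy it for this purpose, so your argument does not cover the case $\lambda=\mu_1$ or $\alpha=p^+$. (There is also a harmless sign typo in your formula for the critical contribution: it should be $\theta\int_\Omega b(1-\alpha/t(x))|u_n|^{t(x)}\diff x$; your stated conclusion of nonnegativity is correct for that expression.)

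The atom-exclusion step and the $(S_+)$ passage to the limit are essentially identical in structure and technique to the paper's proof (testing against $\phi_{i,\epsilon}u_n$ with a Young-inequality splitting and the estimate \eqref{A2''}, passing $n\to\infty$ then $\epsilon\to 0$, converting $S_b\nu_i^{1/t(x_i)}\leq\mu_i^{1/p(x_i)}$ and $\mu_i\leq\theta\nu_i$ into the threshold $S_b^N\theta^{-1/(h(x_i)-1)}$ using $t(x_i)=p^*(x_i)$, and closing via the $(S_+)$ property of the Leray--Lions operator). Once the boundedness step is repaired as above, the rest of your outline matches the paper's argument.
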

\begin{proof} Let $\lambda\in (0,\mu_1]$ and $\theta>0$ and let $\{u_n\}_{n=1}^\infty$ be a $\textup{(PS)}_c$-sequence for $I$ with $c$ satisfying \eqref{PS_c1}. We first claim that $\{u_n\}_{n=1}^\infty$ is bounded in $E$. Indeed, by $(\textup{A3})$ we have
	\begin{multline*}
		I(u_n)-\frac{1}{\alpha}\langle I'(u_n) ,u_n \rangle \geq  \left(\frac{1}{p^+}-\frac{1}{\alpha}\right)\int_{\Omega}a(x,\nabla u_n)\cdot \nabla u_n\diff x \\
		-\frac{\lambda}{\alpha}\int_{\Omega}\left[\alpha F(x,u_n)-f(x,u_n)u_n\right]\diff x+\theta\int_{\Omega}\left(\frac{1}{\alpha}-\frac{1}{t(x)}\right)b(x)|u_n|^{t(x)}\diff x.
	\end{multline*}
From this, $(\textup{A3})$ and $\textup{(F6)}$ we obtain
\begin{multline}\label{PT.super.*}
I(u_n)-\frac{1}{\alpha}\langle I'(u_n) ,u_n \rangle \geq  \left(\frac{1}{p^+}-\frac{1}{\alpha}\right)\int_{\Omega}|\nabla u_n|^{p(x)}\diff x 
-\frac{\lambda (\alpha-p^+)}{\alpha p^+}\int_{\Omega}w(x)|u_n|^{p(x)}\diff x\\
-\frac{\lambda\|e\|_{L^1(\Omega)}}{\alpha}+\theta\left(\frac{1}{\alpha}-\frac{1}{t^-}\right)\int_{\Omega}b(x)|u_n|^{t(x)}\diff x.
\end{multline}
Note that by $\textup{(P)}$ it holds that
\begin{equation}\label{Poincare.ineq}
	\frac{\left(\alpha-p^+\right)\mu_1}{p^+}\int_\Omega w(x)|u_n|^{p(x)}\diff x \leq \frac{\alpha-p^+}{p^+}\int_\Omega|\nabla u_n|^{p(x)}\diff x, \quad \forall n\in\N.
\end{equation}
Combining \eqref{PT.super.*} with \eqref{Poincare.ineq} and recalling $I(u_n)\to c$ and $I'(u_n)\to 0$ we deduce that for $n$ large,
\begin{multline*}
	c+1+\|u_n\|\geq \frac{\left(\mu_1-\lambda\right) (\alpha-p^+)}{\mu_1\alpha p^+}\int_{\Omega}|\nabla u_n|^{p(x)}
	\,\diff x-\frac{\lambda \|e\|_{L^1(\Omega)}}{\alpha}\\
	+\theta\left(\frac{1}{\alpha}-\frac{1}{t^-}\right)\int_{\Omega}b(x)|u_n|^{t(x)}\diff x.
\end{multline*}
Thus,
\begin{equation}\label{PT.super.boundedness1}
%\left(\frac{1}{\alpha}-\frac{1}{p^\ast}\right)\theta
\int_{\Omega}b(x)|u_n|^{t(x)}\diff x \leq C_1(1+\|u_n\|),\ \ \forall n\in\mathbb{N}.
\end{equation}		
Here and in the remaining proof, $C_i$ ($i\in\mathbb{N}$) denotes a positive constant independent of $n$. On the other hand, we deduce from $(\textup{A3})$,  $\textup{(F4)}$ and Proposition~\ref{norm-modular} that for $n$ large,
\begin{align}\label{PT.super.boundedness2}
\notag\frac{1}{p^+}\left(\|u_n\|^{p^-}-1\right)&\leq I(u_n)+\lambda\int_\Omega F(x,u_n)\diff x+\theta\int_{\Omega}\frac{b(x)}{t(x)}|u_n|^{t(x)}\diff x\\
&\leq c+1+\lambda\sum_{j=1}^{m_0}\int_{\Omega}\frac{a_j(x)}{r_j(x)}|u_n|^{r_j(x)}\diff x+\frac{\theta}{t^-}\int_{\Omega}b(x)|u_n|^{t(x)}\diff x.
\end{align}
By the Young inequality, we have
$$a_j|u_n|^{r_j(x)}\leq \frac{t(x)-r_j(x)}{t(x)}\left|a_jb^{-\frac{r_j(x)}{t(x)}}\right|^{\frac{t(x)}{t(x)-r_j(x)}}+\frac{r_j(x)}{t(x)}b|u_n|^{t(x)}.$$
Plugging this into \eqref{PT.super.boundedness2} and recalling the assumption of $a_j$, we arrive at
\begin{equation}\label{PT.PS1.un-boundedness.2}
\|u_n\|^{p^-}\leq C_2\left(1+\int_{\Omega}b(x)|u_n|^{t(x)}\diff x\right).
\end{equation}
From \eqref{PT.super.boundedness1} and \eqref{PT.PS1.un-boundedness.2} we obtain the boundedness of $\{u_n\}_{n=1}^\infty$ in $E$ since $p^->1$. Then by Theorem~\ref{T.CCP}, there exist $\mu,\nu\in \mathcal{M}(\overline{\Omega})$, %(the space of Radon measures on $\overline{\Omega}$),
 $\{x_i\}_{i\in\mathcal{I}}\subset\mathcal{C}$, and $\{\mu_i\}_{i\in\mathcal{I}}, \{\nu_i\}_{i\in\mathcal{I}}\subset (0,\infty)$, where $\mathcal{I}$ is at most countable,  such that up to a subsequence we have
\begin{eqnarray}
u_n(x) &\to& u(x) \ \ \text{a.e.} \ \ x\in\Omega,\label{PT.super.a.e1}\\
u_n &\rightharpoonup& u \ \ \text{in} \  E,\label{PT.super.weak1}\\
|\nabla u_n|^{p(x)} &\overset{\ast }{\rightharpoonup }&\mu \geq |\nabla u|^{p(x)} + \sum_{i\in \mathcal{I}} \mu_i \delta_{x_i} \ \text{in}\  \mathcal{M}(\overline{\Omega}),\label{PT.super.mu1}\\
b|u_n|^{t(x)}&\overset{\ast }{\rightharpoonup }&\nu=b|u|^{t(x)} + \sum_{i\in \mathcal{I}}\nu_i\delta_{x_i} \ \text{in}\ \mathcal{M}(\overline{\Omega}),\label{PT.super.nu1}\\
S_b \nu_i^{1/t(x_i)} &\leq& \mu_i^{1/p(x_i)}, \ \ \forall i\in \mathcal{I},\label{PT.super.mu-nu1}
\end{eqnarray}
where $\delta_{x_i}$ is the Dirac mass at $x_i$. We claim that $\mathcal{I}=\emptyset.$ Suppose, on the contrary, that there exists  $i\in \mathcal{I}.$ Arguing similarly to that obtained \eqref{PT.super.boundedness1}, we have
	\begin{align*}
c+o_n(1)=I(u_n)-\frac{1}{\alpha}\langle I'(u_n) ,u_n \rangle\geq -\frac{\|e\|_{L^1(\Omega)}\lambda}{\alpha}+\left(\frac{1}{\alpha}-\frac{1}{t^-}\right)\theta\int_{\Omega}b(x)|u_n|^{t(x)}\diff x.
\end{align*}
Passing to the limit as $n\to\infty$ in the last inequality and using \eqref{PT.super.nu1}, we obtain
\begin{equation}\label{PT.super.ps.est.c}
c\geq -\frac{\|e\|_{L^1(\Omega)}\lambda}{\alpha}+ \left(\frac{1}{\alpha}-\frac{1}{t^-}\right)\theta\nu_i.
\end{equation}
Next, we will prove that
\begin{equation}\label{PL.PS.Super.est.nu-mu}
\mu_i\leq \theta\nu_i.
\end{equation}
To this end, let $\phi\in C_c^\infty(\mathbb{R}^N)$ be such that $0\leq \phi\leq 1,$ $\phi\equiv 1$ on $B_{\frac{1}{2}}(0)$ and $\operatorname{supp}\, (\phi)\subset B_1(0)$. For $\epsilon>0,$ define $\phi_{i,\epsilon}(x):=
\phi(\frac{x-x_i}{\epsilon})$ for $x\in\mathbb{R}^N$. Fixing such an $\epsilon,$ we get that
\begin{multline*}
\int_{\Omega}a(x,\nabla u_n)\cdot\nabla (\phi_{i,\epsilon}u_n)\diff x=\langle I'(u_n) ,\phi_{i,\epsilon}u_n \rangle+\lambda\int_{\Omega}f(x,u_n)\phi_{i,\epsilon}u_n\diff x\\
+\theta\int_{\Omega}b(x)|u_n|^{t(x)}\phi_{i,\epsilon}\diff x,
\end{multline*}
i.e.,
\begin{multline}\label{PL.PS.3.14}
\int_{\Omega}\phi_{i,\epsilon}a(x,\nabla u_n)\cdot\nabla u_n\diff x=\langle I'(u_n) ,\phi_{i,\epsilon}u_n \rangle-\int_{\Omega}u_na(x,\nabla u_n)\cdot\nabla \phi_{i,\epsilon}\diff x\\
+\lambda\int_{\Omega}\phi_{i,\epsilon}f(x,u_n)u_n\diff x+\theta\int_{\Omega}\phi_{i,\epsilon}b(x)|u_n|^{t(x)}\diff x.
\end{multline}
Let $\delta>0$ be arbitrary and fixed. By the Young inequality, we have 
\begin{align*}
|u_na(x,\nabla u_n)\cdot\nabla \phi_{i,\epsilon}|\leq\, &\delta|a(x,\nabla u_n)|^{\frac{p(x)}{p(x)-1}}+C(p,\delta)|\nabla \phi_{i,\epsilon}|^{p(x)}|u_n|^{p(x)},
\end{align*}
%where $\chi_{B_\epsilon(x_i)}$ denotes the characteristic function of $B_\epsilon(x_i)$. 
where  $C(p,\delta)$ is a positive constant independent of $n$ and $\epsilon$. Using this estimation and $(\textup{A3})$,  we deduce from \eqref{PL.PS.3.14} that
\begin{multline*}
	\int_{\Omega}\phi_{i,\epsilon}|\nabla u_n|^{p(x)}\diff x\leq \langle I'(u_n) ,\phi_{i,\epsilon}u_n \rangle+C_*\delta+C(p,\delta)\int_\Omega|\nabla \phi_{i,\epsilon}|^{p(x)}|u_n|^{p(x)}\diff x\\
	+ \lambda\int_{\Omega}\phi_{i,\epsilon}|f(x,u_n)u_n|\diff x+\theta\int_{\Omega}\phi_{i,\epsilon}b(x)|u_n|^{t(x)}\diff x,
\end{multline*}
where $C_*:=\sup_{n\in\N}\int_\Omega|a(x,\nabla u_n)|^{\frac{p(x)}{p(x)-1}}\diff x\in [0,\infty)$ due to \eqref{A2''} and the boundedness of $\{u_n\}_{n=1}^\infty$ in $E$. Then, by passing to the limit as $n\to\infty$ in the last estimation and invoking Propositions~\ref{Compact.Imb} and \ref{le.compact.imb} with taking into account  \eqref{PT.super.weak1}, the fact that $I'(u_n)\to 0$ and the boundedness of $\{\phi_{i,\epsilon}u_n\}_{n=1}^\infty$ in $E$, we arrive at
\begin{multline}\label{PL.PS.Super.lim}
	\int_{\overline{\Omega}}\phi_{i,\epsilon}\diff\mu\leq \, C_*\delta+ C(p,\delta)\int_\Omega|\nabla \phi_{i,\epsilon}|^{p(x)}|u|^{p(x)}\diff x\\
	+\lambda\int_{\Omega}\phi_{i,\epsilon}\sum_{j=1}^{m_0}a_j(x)|u|^{r_j(x)}\diff x+\theta\int_{\overline{\Omega}}\phi_{i,\epsilon}\diff\nu.
\end{multline}
%\begin{align}\label{PL.PS.Super.lim}
%\int_{\overline{\Omega}}\phi_{i,\epsilon}\diff\mu\leq& \, C_*\delta+ C(p,\delta)\int_\Omega|\nabla \phi_{i,\epsilon}|^{p(x)}|u|^{p(x)}\diff x\notag\\
%&+\lambda\int_{\Omega}\phi_{i,\epsilon}\sum_{j=1}^{m_0}a_j(x)|u|^{r_j(x)}\diff x+\theta\int_{\overline{\Omega}}\phi_{i,\epsilon}\diff\nu.
%\end{align}
Note that %$W^{1,p(\cdot)}(\Omega)\hookrightarrow L^{p^\ast(\cdot)}(\Omega)$. This yields 
$u\in L^{p^\ast(\cdot)}(\Omega)$ in view of Proposition~\ref{Compact.Imb}; hence, by Proposition~\ref{Holder ineq}, 
\begin{align}\label{PL.PS.Super.est}
\int_\Omega|\nabla \phi_{i,\epsilon}|^{p(x)}|u|^{p(x)}\diff x \leq 2\big\| |u|^{p}\big\|_{L^{\frac{p^\ast(\cdot)}{p(\cdot)}}(B_\epsilon(x_i)\cap \Omega)}
\big\||\nabla \phi_{i,\epsilon}|^{p} \big\|_{L^{\frac{N}{p(\cdot)}}
	(B_\epsilon(x_i)\cap \Omega)}.
\end{align}
On the other hand, invoking Proposition~\ref{norm-modular} we have 
$$
\big\||\nabla \phi_{i,\epsilon}|^{p} \big\|_{L^{\frac{N}{p(\cdot)}}
	(B_\epsilon(x_i)\cap \Omega)}^{p^-}
\le 1+ \int_{B_\epsilon(x_i)\cap \Omega} |\nabla \phi_{i,\epsilon}|^N \diff x  \leq 1+\int_{B_1(0)} |\nabla \phi (y)|^N\, \diff y.
$$
From this and the fact that $u\in L^{p^\ast(\cdot)}(\Omega)$ it follows from \eqref{PL.PS.Super.est} that
$$\lim_{\epsilon\to 0^+}\int_\Omega|\nabla \phi_{i,\epsilon}|^{p(x)}|u|^{p(x)}\diff x=0.$$ Clearly, $\lim_{\epsilon\to 0^+}\int_{\Omega}\phi_{i,\epsilon}\sum_{j=1}^{m_0}a_j(x)|u|^{r_j(x)}\diff x=0$. By letting $\epsilon\to 0^+$ in \eqref{PL.PS.Super.lim} and taking into account the last two limits we arrive at
\begin{equation*}
\mu_i\leq C_*\delta+\theta\nu_i.
\end{equation*}
Thus, we obtain \eqref{PL.PS.Super.est.nu-mu} since $\delta>0$ was chosen arbitrarily. Combining \eqref{PL.PS.Super.est.nu-mu} and \eqref{PT.super.mu-nu1} and noting $t(x_i) = \frac{Np(x_i)}{N-p(x_i)}$, we have
\begin{equation}\label{PT.super.ps.mu-nu}
\theta \nu_i\geq \mu_i\geq S_b^N\theta^{-\frac{p(x_i)}{t(x_i)-p(x_i)}}.
\end{equation}	
Combining \eqref{PT.super.ps.est.c} and \eqref{PT.super.ps.mu-nu} gives
$$c\geq 	-\frac{\|e\|_{L^1(\Omega)}\lambda}{\alpha}+\left(\frac{1}{\alpha}-\frac{1}{t^-}\right)S_b^N\min\left\{\theta^{-\frac{1}{h^+-1}},\theta^{-\frac{1}{h^--1}}\right\},$$
which contradicts to \eqref{PS_c1}. That is, $\mathcal{I}=\emptyset$ and hence,
$$\int_{\Omega}b(x)|u_n|^{t(x)}\diff x\to \int_{\Omega}b(x)|u|^{t(x)}\diff x.$$
From this and \eqref{PT.super.a.e1} we deduce that $u_n\to u$ in $L^{t(\cdot)}(b,\Omega)$ in view of the Br\'ezis-Lieb lemma (see e.g., \cite[Lemma 3.6]{HS.NA2016}). Thus, by Propositions~\ref{Holder ineq} and \ref{norm-modular} we easily obtain
\begin{equation}\label{PT.super.ps.int.b}
\int_{\Omega}b(x)|u_n|^{t(x)-2}u_n(u_n-u)\diff x\to 0.
\end{equation}
On the other hand, in view of $\textup{(F4)}$ %and invoking Propositions~\ref{Holder ineq} and \ref{norm-modular} we have
%\begin{align*}
%\left|\int_\Omega f(x,u_n)(u_n-u)\diff x\right|&\leq \sum_{j=1}^{m_0}\int_{\Omega}a_j(x)|u_n|^{r_j(x)-1}|u_n-u|\diff x\notag\\
%&\leq \sum_{j=1}^{m_0}\left(\|u_n\|^{r_j^+-1}_{L^{r_j(\cdot)}(a_j,\Omega)}+1\right)\|u_n-u\|_{L^{r_j(\cdot)}(a_j,\Omega)}.
%\end{align*}
%Invoking Proposition~\ref{le.compact.imb} we deduce from \eqref{PT.super.weak1} and the last estimation that
 and Proposition~\ref{le.compact.imb} it follows from \eqref{PT.super.weak1} that
 \begin{equation}\label{PT.super.ps.int.f}
 \int_\Omega f(x,u_n)(u_n-u)\diff x\to 0.
 \end{equation}
 Using \eqref{PT.super.ps.int.b}, \eqref{PT.super.ps.int.f}, the boundedness of $\{u_n\}_{n=1}^\infty$ in $E$ and the fact that $I'(u_n)\to 0$ in $E^*,$  we obtain
 \begin{multline*}
 	\int_{\Omega}a(x,\nabla u_n)\cdot (\nabla u_n-\nabla u)\diff x=\big\langle I'(u_n) ,u_n-u \big\rangle+\lambda\int_{\Omega}f(x,u_n)(u_n-u)\diff x\\
 	+\theta\int_{\Omega}b(x)|u_n|^{t(x)-2}u_n(u_n-u)\diff x\to 0;
 \end{multline*}
hence, $u_n\to u$ in $E$ due to the $(S_+)$ property of Leray-Lions operators with variable growths (see \cite[Theorem 4.1]{LKV.2009}). In other words, $I$ satisfies the $\textup{(PS)}_c$ condition and the proof is complete.
		\end{proof}
In order to verify condition $(\textup{I}2)$ of Proposition~\ref{prop.abs}, we will take $\widetilde{X}$ from a sequence $\{E_k\}_{k=1}^\infty$ of linear subspaces of $E$, which is constructed as follows. For each $k\in\mathbb{N}$, define
\begin{equation}\label{Ek}
	E_k:=\operatorname{span}\{\varphi_1,\varphi_2,\cdots,\varphi_k\},
\end{equation}	
	where $\varphi_n$ is an  eigenfunction corresponding to the $n^{\text{th}}$ eigenvalue of the following eigenvalue problem with $B$ in $\textup{(F5)}$: 
	\begin{equation*}\label{EP}
		\begin{cases}
			-\Delta u=\mu u \quad &\text{in } B,\\
			u=0 \quad &\text{on } \partial B,
		\end{cases}
	\end{equation*}
	which is extended to  $\Omega$ by putting $\varphi_n(x)=0$ for $x\in\Omega\setminus B.$

\begin{lemma}\label{PL.Super.Rk}
Let $\textup{(F4)}$ and $\textup{(F5)}$ hold.  Let $\lambda>0$ and let $\theta> 0$. Then, for each $k\in\mathbb{N}$, there exists $R_k>1$ independent of $\theta$ such that  
\begin{equation}\label{PT.Super1.Rk}
I(u)< 0,\ \ \forall u\in E_k\setminus B_{R_k}.
\end{equation}
\end{lemma}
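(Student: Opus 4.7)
The strategy is to exploit the finite-dimensionality of $E_k$ together with the locally superlinear bound $\textup{(F5)}$, while arranging the $\theta$-dependence to drop out. Since each $\varphi_n$ is extended by zero outside $B$, every $u \in E_k$ is supported in $\overline{B}$, and since $-\theta \int_\Omega \frac{b(x)}{t(x)} |u|^{t(x)} \diff x \leq 0$, it suffices to prove the stronger statement that $J(u) := \int_B A(x, \nabla u) \diff x - \lambda \int_B F(x, u) \diff x$ is negative for $\|u\| > R_k$; the resulting $R_k$ is then automatically independent of $\theta$.

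For the principal part I will combine \eqref{A2'} with the elementary inequality $|\xi|^{p(x)} \leq 1 + |\xi|^{p_B^+}$ (valid since $p \leq p_B^+$ on $B$) to obtain $\int_B A(x, \nabla u) \diff x \leq C + C \int_B |\nabla u|^{p_B^+} \diff x$. For the nonlinear part, given any large $M > 0$, the condition $\textup{(F5)}$ supplies a threshold $\delta_M > 0$ with $F(x, \tau) \geq M m(x) |\tau|^{p_B^+}$ for a.e. $x \in B$ and $|\tau| \geq \delta_M$. On the complementary regime $\{|\tau| \leq \delta_M\}$, $\textup{(F4)}$ bounds $|F(x, \tau)|$ by a fixed multiple of $\sum_j a_j(x)$, and Proposition~\ref{Holder ineq} together with $b \in L^\infty(\Omega)$ gives $a_j \in L^1(B)$. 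Splicing the two regimes produces a uniform pointwise lower bound
$$F(x, \tau) \geq M m(x) |\tau|^{p_B^+} - H_M(x), \quad \text{for all } \tau \in \R \text{ and a.e.\ } x \in B,$$
with $H_M \in L^1(B)$.

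The final step uses that $E_k$ is finite-dimensional, so all norms are equivalent on it; in particular there exist constants $M_k, \gamma_k > 0$ with $\int_B |\nabla u|^{p_B^+} \diff x \leq M_k \|u\|^{p_B^+}$ and $\int_B m(x) |u|^{p_B^+} \diff x \geq \gamma_k \|u\|^{p_B^+}$ for all $u \in E_k$. The delicate point (and the main obstacle worth flagging) is showing that the weighted expression on the left of the second inequality is actually a norm on $E_k$: this requires $m > 0$ almost everywhere on $B$, which is precisely the content of $m \in L^1_+(B)$, combined with the fact that no nontrivial linear combination of the smooth Dirichlet eigenfunctions $\varphi_1, \dots, \varphi_k$ can vanish on a subset of $B$ of full measure. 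Assembling everything then gives
$$I(u) \leq C + \lambda \|H_M\|_{L^1(B)} + (C M_k - \lambda M \gamma_k)\|u\|^{p_B^+},$$
and choosing $M$ so large that $\lambda M \gamma_k > 2 C M_k$ forces $I(u) < 0$ whenever $\|u\|$ exceeds an explicit $R_k$ depending only on $k$, $\lambda$, and the data, but not on $\theta$.
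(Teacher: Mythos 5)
Your proof is correct and follows essentially the same approach as the paper: drop the nonpositive $\theta$-term so the bound is $\theta$-free, use \eqref{A2'} for an upper estimate of the $A$-integral, use \textup{(F5)} together with \textup{(F4)} to produce a pointwise lower bound $F(x,\tau)\ge M\,m(x)|\tau|^{p_B^+}-H_M(x)$ with $H_M\in L^1(B)$, and then invoke equivalence of norms on the finite-dimensional space $E_k$. The only (inessential) difference is that the paper bounds $\int_B|\nabla u|^{p(x)}\diff x\le\|u\|^{p_B^+}$ directly via Proposition~\ref{norm-modular} on $B$, whereas you first pass to $\int_B|\nabla u|^{p_B^+}\diff x$ and then use finite-dimensional norm equivalence a second time; also, your discussion of why $u\mapsto\left(\int_B m|u|^{p_B^+}\diff x\right)^{1/p_B^+}$ is a genuine norm on $E_k$ is more elaborate than necessary, since $m>0$ a.e. and the continuity of elements of $E_k$ already settle it.
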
	
\begin{proof}
Let $k\in\mathbb{N}$. Since all norms on $E_k$ are mutually equivalent, we can find $\delta_k>0$ such that
	\begin{equation}\label{PT.Super.equi-norms}
	\delta_k\|u\|\leq \|u\|_{L^{p^+_B}(m,B)},\ \ \forall u\in E_k.
	\end{equation}
	Let $\gamma_k>0$ be  positive constant such that
	\begin{equation}\label{PT.super1.xim}
	\lambda\gamma_k \delta_k^{p^+_B}-\widetilde{C}>0
	\end{equation}
	with $\widetilde{C}$ in \eqref{A2'}. Then, by $\textup{(F5)}$ we find $T_k>0$ such that
	\begin{equation*}
	F(x,\tau)\geq \gamma_k m(x)|\tau|^{p^+_B},\ \ \text{for a.e.}\, x\in B\  \text{and all}\ |\tau|\geq T_k\, .
	\end{equation*}
	Hence,
	\begin{equation}\label{PT.super1.F(x,u)}
	F(x,\tau)\geq \gamma_k m(x)|\tau|^{p^+_B}-\sup_{|\tau|\leq T_k}\, |F(x,\tau)|,\ \ \text{for a.e.}\ x\in B\ \text{and all}\ \ \tau\in\mathbb{R}.
	\end{equation}
	From \eqref{A2'} and \eqref{PT.Super.equi-norms}-\eqref{PT.super1.F(x,u)}, for $u\in E_k\setminus B_{R_k}$ with $R_k>1$ large enough we have
	\begin{align*}
	I(u)&\leq \widetilde{C}\int_B \left(|\nabla u|^{p(x)}+1\right)\diff x-\lambda \int_{B}F(x,u)\diff x\notag\\
	&\leq \widetilde{C}\left(\|u\|^{p^+_B}+|B|\right)-\lambda\gamma_k\|u\|^{p^+_B}_{L^{p^+_B}(m,B)}+\lambda\int_{B} \sup_{|\tau|\leq T_k}\, |F(x,\tau)|\diff x\notag\\
	&\leq -\left(\lambda\gamma_k \delta_k^{p^+_B}-\widetilde{C}\right)\|u\|^{p^+_B}+\lambda\int_{B} \sup_{|\tau|\leq T_k}\, |F(x,\tau)|\diff x+\widetilde{C}|B|\\
	&\leq -\left(\lambda\gamma_k \delta_k^{p^+_B}-\widetilde{C}\right)R_k^{p^+_B}+\lambda\int_{B} \sup_{|\tau|\leq T_k}\, |F(x,\tau)|\diff x+\widetilde{C}|B|\notag\\
	&<0,
	\end{align*}
proving \eqref{PT.Super1.Rk}. Here we have used that fact that $\sup_{|\tau|\leq T_k}\, |F(\cdot,\tau)|\in L^1(B)$ due to $\textup{(F4)}$. Obviously, $R_k$ above can be chosen independently of $\theta$.
	\end{proof}
Finally, to verify the remaining conditions of Proposition~\ref{prop.abs}, we introduce a sequence  $\{V_k\}_{k=1}^\infty$ of finite dimensional linear subspaces of $E$ as follows.  Let $\{e_n\}_{n=1}^\infty\subset E$ be a Schauder basis of $E$ and let $\{f_n\}_{n=1}^\infty\subset E^*$ be such that for each $n\in\mathbb{N},$
$$\langle f_n,u\rangle=\alpha_n\quad \text{for}\ \ u=\sum_{j=1}^\infty\alpha_je_j\in E.$$
For each $k\in\mathbb{N},$ define
$$V_k:=\{u\in E:\ \langle f_n,u\rangle=0,\ \ \forall\, n> k\}$$
and 
$$W_k:=\{u\in E:\ \langle f_n,u\rangle=0,\ \ \forall\, n\leq k\}.$$
Then, $E=V_k \oplus W_k$ and $\operatorname{dim}V_k=k$. Define
\begin{equation}\label{PT.super1.delta_k}
\delta_k:=\underset{\|v\|\leq 1}{\underset{v\in W_k}{\sup}}\, \max_{1\leq j\leq m_0}\, \|v\|_{L^{r_j(\cdot)}(a_j,\Omega)}.
\end{equation}
The next lemma was proved in \cite{HHS}.

\begin{lemma}\label{PL.Super.delta_k}
	The sequence $\{\delta_k\}_{k=1}^\infty$ above satisfies $0< \delta_{k+1}\leq\delta_k$ for all $k\in\mathbb{N}$ and
	\begin{equation*}%\label{PT.super1.lim.delta}
	 \lim_{k\to\infty}\delta_k=0.
	\end{equation*} 
\end{lemma}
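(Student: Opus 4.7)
The plan is to verify the three assertions of the lemma—positivity, monotonicity, and convergence to zero—in order of increasing difficulty. Monotonicity is immediate from the definition: since $W_{k+1}\subset W_k$, the supremum defining $\delta_{k+1}$ is taken over a smaller set, so $\delta_{k+1}\leq \delta_k$. Positivity follows because $W_k$ is the intersection of finitely many kernels of continuous linear functionals, hence an infinite-codimensional closed subspace that is still infinite dimensional; it therefore contains a nonzero $v_0$, and since each $a_j$ is strictly positive a.e.\ by \textup{(F4)}, $\|v_0\|_{L^{r_j(\cdot)}(a_j,\Omega)}>0$. Normalizing $v_0/\|v_0\|$ produces a feasible competitor for the supremum, giving $\delta_k>0$.

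The substantive content is the convergence $\delta_k\to 0$, which I would prove by contradiction. If it fails, then by monotonicity there exists $\epsilon_0>0$ and, for each $k$, some $v_k\in W_k$ with $\|v_k\|\leq 1$ and
$$\max_{1\leq j\leq m_0}\|v_k\|_{L^{r_j(\cdot)}(a_j,\Omega)}\geq \epsilon_0/2.$$
The sequence $\{v_k\}$ is bounded in the reflexive space $E$, so up to a subsequence $v_k\rightharpoonup v$ in $E$. For any fixed $n\in\mathbb{N}$ and all $k\geq n$ we have $\langle f_n,v_k\rangle=0$; taking the weak limit yields $\langle f_n,v\rangle=0$ for every $n$. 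Since $\{e_n\}$ is a Schauder basis and $\{f_n\}$ its coordinate functionals, this forces $v=0$.

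The decisive ingredient is the compact embedding $E\hookrightarrow\hookrightarrow L^{r_j(\cdot)}(a_j,\Omega)$, which Proposition~\ref{le.compact.imb} delivers because \textup{(F4)} places $a_j$ in $L_+^{\frac{t(\cdot)}{t(\cdot)-r_j(\cdot)}}\!\big(b^{-r_j/(t-r_j)},\Omega\big)$, together with the required embedding $W_0^{1,p(\cdot)}(\Omega)\hookrightarrow L^{t(\cdot)}(b,\Omega)$ coming from \eqref{cri.imb}. This compactness gives $v_k\to 0$ strongly in each $L^{r_j(\cdot)}(a_j,\Omega)$; since the maximum is over the finite index set $j=1,\dots,m_0$, we get $\max_{1\leq j\leq m_0}\|v_k\|_{L^{r_j(\cdot)}(a_j,\Omega)}\to 0$, contradicting the lower bound $\epsilon_0/2$.

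The only delicate point is confirming the applicability of Proposition~\ref{le.compact.imb} to each pair $(a_j,r_j)$: one needs the exponent condition $\inf_{x\in\overline{\Omega}}[t(x)-r_j(x)]>0$ from \textup{(F4)} to make the dual exponent $\frac{t(\cdot)}{t(\cdot)-r_j(\cdot)}$ belong to $C_+(\overline{\Omega})$, and one needs the critical Sobolev embedding \eqref{cri.imb} (when $p^+<N$) — or the appropriate substitute when $p^+\geq N$ — to supply $W_0^{1,p(\cdot)}(\Omega)\hookrightarrow L^{t(\cdot)}(b,\Omega)$. Once these are in place, the argument is a standard weak-to-strong upgrade using the Schauder-basis structure of $E$.
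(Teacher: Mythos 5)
Your proposal is correct. The paper itself does not reproduce a proof — it simply states "The next lemma was proved in \cite{HHS}" — so a line-by-line comparison to the source in front of you is impossible, but the argument you give (monotonicity from $W_{k+1}\subset W_k$; positivity from $a_j>0$ a.e.\ and $W_k$ being infinite-dimensional; and the weak-to-strong upgrade: boundedness gives a weakly convergent subsequence, the Schauder-coordinate functionals force the weak limit to be $0$, and the compact embedding $E\hookrightarrow\hookrightarrow L^{r_j(\cdot)}(a_j,\Omega)$ furnished by Proposition~\ref{le.compact.imb} under \textup{(F4)} and \eqref{cri.imb} then gives strong convergence to $0$, contradicting $\delta_k\geq\epsilon_0$) is the standard one and surely what \cite{HHS} does. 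You also correctly isolate the two hypotheses that make Proposition~\ref{le.compact.imb} applicable to each pair $(a_j,r_j)$: the uniform gap $\inf_{\overline{\Omega}}[t-r_j]>0$ and the critical embedding into $L^{t(\cdot)}(b,\Omega)$. One small slip in wording: $W_k$ is a \emph{finite}-codimensional closed subspace (codimension $k$), not "infinite-codimensional"; the conclusion you actually use — that $W_k$ is still infinite-dimensional and hence contains a nonzero element — is of course correct and is exactly what finite codimension gives you, so the argument is unaffected.
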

We are now in a position to complete  the proof of Theorem~\ref{Theo.Superlinear}.
 
\begin{proof}[\textbf{Proof of Theorem~\ref{Theo.Superlinear}}]  Let $\lambda\in (0,\mu_1]$ be given and let $\theta> 0$. We will show that  the conditions $(\textup{I}1)-(\textup{I}3)$ of Proposition~\ref{prop.abs} are fulfilled with $X=E$ and $I$ given by \eqref{def.I}. Firstly, we will show that $I$ satisfies $(\textup{I}1)$. By Lemma~\ref{PL.Super.delta_k}, we find $k_0\in\N$ such that $\delta_{k_0} \in (0,1)$. 
By using $(\textup{A3})$ and $\textup{(F4)}$ we have that for any $u\in W_{k_0}$ with $\|u\|>1,$ 
	\begin{align}\label{PT.Super.est1I}
	I(u)&\geq\frac{1}{p^+}\left(\|u\|^{p^-}-1\right)-\lambda\sum_{j=1}^{m_0}\frac{1}{r_j^-}\left(\|u\|_{L^{r_j(\cdot)}(a_j,\Omega)}^{r_j^+}+1\right) -\frac{\theta}{t^-}\int_\Omega b|u|^{t(x)}\diff x.	\end{align}
 Note that for $u\in E$ with $\|u\|>1$, it follows from Proposition~\ref{norm-modular} and \eqref{Sb} that
 \begin{equation}\label{PT.Super.est1I'}
 	\int_\Omega b|u|^{t(x)}\diff x\leq \left(1+S_b^{-t^+}\right)\|u\|^{t^+}.
 \end{equation} 
Moreover, by considering $v=z/\|z\|$ for $z\in W_{k_0}\setminus\{0\}$ we deduce from the definition \eqref{PT.super1.delta_k} of $\delta_k$ that
\begin{equation}\label{PT.Super.est1I''}
	\max_{1\leq j\leq m_0}\, \|z\|_{L^{r_j(\cdot)}(a_j,\Omega)}\leq \delta_{k_0} \|z\|,\quad \forall z\in W_{k_0}.
\end{equation}
Invoking \eqref{PT.Super.est1I'} and \eqref {PT.Super.est1I''}, we get from \eqref{PT.Super.est1I} that 
%\begin{align}\label{PT.Super.est2I}
%		I(u)\geq\frac{1}{p^+}\rho_{k_0}^{p^-}-\frac{m_0\lambda\delta_{k_0}^{r_\ast}}{r_\ast}\rho_{k_0}^{r^\ast}-\left(\frac{m_0\lambda}{r_\ast}+\frac{1}{p^+}\right)-\frac{ \left(1+S_b^{-t^+}\right)\theta}{t^-}\rho_{k_0}^{t^+},\quad \forall u\in \partial B_{\rho_{k_0}}\cap W_{k_0},
%\end{align}
for all $u\in W_{k_0}$ with $\|u\|>1$,
\begin{align}\label{PT.Super.est2I}
		I(u)\geq\frac{1}{p^+}\|u\|^{p^-}-\frac{m_0\lambda\delta_{k_0}^{r_\ast}}{r_\ast}\|u\|^{r^\ast}-\left(\frac{m_0\lambda}{r_\ast}+\frac{1}{p^+}\right)-\frac{ \left(1+S_b^{-t^+}\right)\theta}{t^-}\|u\|^{t^+},
\end{align}
where $r_\ast:=\min_{1\leq j\leq m_0}\, r_j^-$ and $r^* := \max_{1 \le j \le m_0} r_j^+$. Let $\rho_{k_0}>0$ be such that
$$\frac{m_0\lambda\delta_{k_0}^{r_\ast}}{r_\ast}\rho_{k_0}^{r^\ast}=\frac{1}{2p^+}\rho_{k_0}^{p^-}\ \ \text{i.e.,}\ \ \rho_{k_0}=\left(\frac{r_*}{2m_0p^+\lambda\delta_{k_0}^{r^*}}\right)^{\frac{1}{r^*-p^-}}.$$
By Lemma~\ref{PL.Super.delta_k}, we can fix $k_0\in\mathbb{N}$ from the beginning such that
\begin{equation*}
	0<\delta_{k_0}<1<\rho_{k_0}\ \ \text{and}\ \ \frac{1}{2p^+}\rho_{k_0}^{p^-}-\left(\frac{m_0\lambda}{r_\ast}+\frac{1}{p^+}\right)>\frac{1}{4p^+}\rho_{k_0}^{p^-}.
\end{equation*}
Thus, \eqref{PT.Super.est2I} yields
\begin{equation*}
	I(u)\geq \frac{1}{4p^+}\rho_{k_0}^{p^-}-\frac{ \left(1+S_b^{-t^+}\right)\theta}{t^-}\rho_{k_0}^{t^+},\quad \forall u\in \partial B_{\rho_{k_0}}\cap W_{k_0}.
\end{equation*}
Therefore, by choosing $V:=V_{k_0}$, $W:=W_{k_0}$ and $\theta_0:=\frac{t^-\rho_{k_0}^{p^--t^+}}{4p^+\left(1+S_b^{-t^+}\right)}>0$, we have that for any $\theta\in (0,\theta_0)$,
$$I(u)\geq \beta,\ \ \forall u\in \partial B_\rho\cap W$$
with $\rho=\rho_{k_0}$ and $\beta=\frac{ \left(1+S_b^{-t^+}\right)\rho_{k_0}^{t^+}}{t^-}(\theta_0-\theta)>0.$ That is, $I$ satisfies $(\textup{I}1)$ in Proposition~\ref{prop.abs}.

\vspace{0.3cm}
Next, we will show that $I$ satisfies $(\textup{I}2)$ in Proposition~\ref{prop.abs}. To this end, let $\{E_k\}_{k=1}^\infty$ be defined by \eqref{Ek}. For each $k\in\N$ we have 
\begin{align}\label{PT.Super.Lk*}
	\sup_{u\in E_{k}}\, I(u)=\underset{\|u\|\leq R_k}{\max_{u\in E_k}}\, I(u)
\end{align}
due to the fact that $I(0)=0$ and Lemma~\ref{PL.Super.Rk}. Note that by using \eqref{A2'} and $\textup{(F4)}$, we have that for any $u\in E$ with $\|u\|\leq R_k$,
\begin{align*}
	I(u)&< \widetilde{C}\int_\Omega \left(|\nabla u|^{p(x)}+1\right)\diff x-\lambda \int_{\Omega}F(x,u)\diff x\notag\\
	&\leq \widetilde{C}\|u\|^{p^+}+\widetilde{C}(|\Omega|+1)+\lambda \sum_{j=1}^{m_0}\int_{\Omega}\frac{a_j(x)}{r_j(x)}|u|^{r_j(x)}\diff x\notag\\
	&\leq \widetilde{C}\|u\|^{p^+}+\widetilde{C}(|\Omega|+1)+ \sum_{j=1}^{m_0}\frac{\lambda}{r_j^-}\left(\|u\|_{L^{r_j(\cdot)}(a,\Omega)}^{r_j^+}+1\right)\notag\\
	&\leq \widetilde{C}\|u\|^{p^+}+\frac{\lambda}{r_\ast} \sum_{j=1}^{m_0} C_j^{r_j^+}\|u\|^{r_j^+}+\frac{m_0\lambda}{r_\ast}+\widetilde{C}(|\Omega|+1)\notag\\
	&\leq \widetilde{C}R_k^{p^+}+\frac{\lambda}{r_\ast} \sum_{j=1}^{m_0} C_j^{r_j^+}R_k^{r_j^+}+m_0\lambda+\widetilde{C}(|\Omega|+1)=:L_k,
\end{align*}
where  $C_j$ ($j=1,\cdots,m_0$) denotes the imbedding constant for $E\hookrightarrow L^{r_j(\cdot)}(a_j,\Omega)$.
From this and \eqref{PT.Super.Lk*} we obtain
\begin{align}\label{PT.Super.Lk**}
	\sup_{u\in E_{k}}\, I(u)<L_k, \quad \forall k\in\N.
\end{align}
Finally, let $\{\theta_k\}_{k=1}^\infty\subset (0,\theta_0)$ be such that for any $k\in \mathbb{N},$
\begin{equation}
	\begin{cases}
		L_{k_0+k}<-\frac{\|e\|_{L^1(\Omega)}\lambda}{\alpha}+\left(\frac{1}{\alpha}-\frac{1}{t^-}\right)S_b^N\min\left\{\theta_k^{-\frac{1}{h^+-1}},\theta_k^{-\frac{1}{h^--1}}\right\},\\
		\theta_{k+1}<\theta_k.
	\end{cases}
\end{equation}
Then, for any $\theta\in (\theta_{k+1},\theta_k)$ we have
$$L_{k_0+k}<-\frac{\|e\|_{L^1(\Omega)}\lambda}{\alpha}+\left(\frac{1}{\alpha}-\frac{1}{t^-}\right)S_b^N\min\left\{\theta^{-\frac{1}{h^+-1}},\theta^{-\frac{1}{h^--1}}\right\}$$
and hence, $I$ satisfies the $\textup{(PS)}_c$ for any $c\in [0,L_{k_0+k}]$ in view of Lemma~\ref{le.ps1}. Thus, $I$ satisfies $(\textup{I}2)$ and $(\textup{I}3)$ with $\widetilde{X}=E_{k_0+k}$ and $L=L_{k_0+k}.$ So, $I$ admits at least $\operatorname{dim}\, \widetilde{X}- \operatorname{dim}\, V=k$  pairs of nontrivial critical points in view of Proposition~\ref{prop.abs}; hence, problem~\eqref{Eq.General} has at least $k$ pairs of nontrivial solutions. The proof is complete.
	\end{proof}
\begin{remark}\rm
	We would like to discuss problem~\eqref{Eq.General} without a parameter in the subcritical term as follows:
		\begin{align}\label{Eq.General*}
		\begin{cases}
			-\operatorname{div}a(x,\nabla u)=f(x,u)+\theta b(x)|u|^{t(x)-2}u \quad \text{in } \Omega,\\
			u=0\quad \text{on } \partial \Omega. 
		\end{cases}
	\end{align}
The existence of a sequence of nontrivial solutions to problem~\eqref{Eq.General*} for the $p(\cdot)$-sublinear case was obtained in Theorem~\ref{Theo.Sublinear} (for all $\theta\geq 0$). For the $p(\cdot)$-superinear case, by repeating the proof of Theorem~\ref{Theo.Superlinear}, we find a sequence $\{\theta_n\}_{n=1}^\infty$ with $0<\theta_{n+1}<\theta_n$ for all $n\in\mathbb{N}$ such that for any $\theta\in (\theta_{n+1},\theta_n)$, problem \eqref{Eq.General*} admits at least $n$ distinct pairs of nontrivial solutions under  $\textup{(P)}, (\textup{A0})-(\textup{A4})$, $\textup{(F3)}-\textup{(F5)}$ and $\textup{(F6)}$ replaced by 
\begin{itemize}
		\item [$(\widetilde{\textup{F}}6)$] For $w$ given by $\textup{(P)}$, there exist $\alpha\in [p^+,t^-)$ and $e\in L^1_+(\Omega)$ such that $$\alpha F(x,\tau)-\tau f(x,\tau)\leq\frac{\left(\alpha-p^+\right)\mu_1}{p^+}w(x)|\tau|^{p(x)}+e(x)\  \text{for a.e.}\ x\in\Omega\ \text{and all}\ \tau\in\mathbb{R}.$$
\end{itemize}
The result remains valid under $(\textup{A0})-(\textup{A4})$, $\textup{(F3)}-\textup{(F5)}$ and $\textup{(F6)}$ replaced by 
\begin{itemize}
	\item [$(\overline{\textup{F}}6)$] There exist $\alpha\in [p^+,t^-)$ and functions $\sigma\in C_+(\overline{\Omega})$ with $\sigma^+<p^-$, $w\in L_+^{\frac{t(\cdot)}{t(\cdot)-\sigma(\cdot)}}\left(b^{-\frac{\sigma}{t-\sigma}},\Omega\right)$, and $e\in L^1_+(\Omega)$ such that 
	$$\alpha F(x,\tau)-\tau f(x,\tau)\leq w(x)|\tau|^{\sigma(x)}+e(x)\  \text{for a.e.}\ x\in\Omega\ \text{and all}\ \tau\in\mathbb{R}.$$
\end{itemize} 
This result generalizes of \cite[Theorem A]{Sil-Xav.2003} that treated for the $p$-Laplacian.
%Clearly, under $(\textup{A0})-(\textup{A5}),  \textup{(F1)}$ and $\textup{(F2)}$, problem~\eqref{Eq.General*} admits a sequence of nontrivial solutions $\{u_n\}_{n=1}^\infty$ converging to zero in $W^{1,p(\cdot)}_0(\Omega)$ for any $\theta\geq 0$ in view of By Theorem~\ref{Theo.Sublinear}.
\end{remark}

%$$$$$$$$$$$$$$$$$$$$$ SECTION 3. THE SANDWICH CASE $$$$$$$$$$$$$$$$$$$$$%

\section{The sandwich case}\label{sandwich}
 Throughout this section we always assume that $(\textup{A0})-(\textup{A5})$, $\textup{(S)}$, $\textup{(W)}$ and $\textup{(L1)}$ hold. As in Subsection~\ref{Subsec3.2}, we will restrict our proof to the case $p^+<N$. To determine (nonnegative) solutions of problem~\eqref{Eq.Sandwich}, we introduce the functionals $J,\Psi:\, E\to\mathbb{R}$ defined as
\begin{equation}\label{J}
	J(u) =\int_\Omega A(x,\nabla u)\diff x-\lambda\int_\Omega \frac{ m(x)}{s(x)}u_+^{s(x)}\diff x-\theta\int_\Omega \frac{b(x)}{t(x)}u_+^{t(x)}\diff x
\end{equation}
and
\begin{equation}\label{Psi}
	\Psi(u) =\int_\Omega A(x,\nabla u)\diff x-\lambda\int_\Omega \frac{ m(x)}{s(x)}|u|^{s(x)}\diff x-\theta\int_\Omega \frac{b(x)}{t(x)}|u|^{t(x)}\diff x
\end{equation}
for $ u\in E$.
Here and in the rest of paper, both $\phi_+$ and $\phi^+$ stand for $\max\{\phi,0\}$. Under the given assumptions, $J,\Phi$ are of class $C^1\left(E,\mathbb{R}\right)$ and a critical point of $J$ (resp. $\Phi$) is a nonnegative solution (resp. a solution) to problem~\eqref{Eq.Sandwich}. We will make use of the critical points theorems and genus theory to determine critical points of $J$ and $\Phi$. In order to verify the $\textup{(PS)}$ condition for $J$ and $\Psi$, we need the next lemma.

%~~~~~~~~~~~~~~~~~~~~~~~~~ LEMMA (PS) FOR SANDWICH CASE ~~~~~~~~~~~~~~~~~~~~~~~~~%
\begin{lemma}\label{le.sandwich.PS} The following assertions hold.
	\begin{itemize}
		\item [(i)] For $\theta=0$ and $\lambda\in\mathbb{R},$ $J$ and $\Psi$ satisfy the $\textup{(PS)}_c$ condition for all $c\in\mathbb{R}.$
		\item  [(ii)] For $\theta>0$ and $\lambda\in\mathbb{R},$ $J$ and $\Psi$ satisfy the $\textup{(PS)}_c$ condition for all $c\in\mathbb{R}$ satisfying
		\begin{align}\label{PS_c2}
		c<\left(\frac{1}{p^+}-\frac{1}{t^-}\right)S_b^N&\min\left\{\theta^{-\frac{1}{h^+-1}},\theta^{-\frac{1}{h^--1}}\right\}\notag\\
		&-K\max\left\{\theta^{-\frac{1}{\ell^+-1}},\theta^{-\frac{1}{\ell^--1}}\right\}\max\left\{|\lambda|^{\frac{\ell^+}{\ell^+-1}},|\lambda|^{\frac{\ell^-}{\ell^--1}}\right\},
		\end{align}
	where $K=K(p,s,t,m,b,\Omega)>0$, $h(x):=\frac{t(x)}{p(x)}$ and $\ell(x):=\frac{t(x)}{s(x)}$ for $x\in\overline{\Omega}.$
			\end{itemize}
		
	\end{lemma}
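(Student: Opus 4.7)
The plan is to mirror the template from Lemma~\ref{le.ps1}, with the dissipation identity $\Psi(u_n)-\frac{1}{p^+}\langle\Psi'(u_n),u_n\rangle$ (and analogously for $J$) playing the role of the $\alpha$-identity there. Since $p^+\in[s^+,t^-]$ by $\textup{(S)}$, $\textup{(A3)}$ forces the resulting $A$-integrand to be pointwise nonnegative while the $\theta$-coefficient $\frac{1}{p^+}-\frac{1}{t^-}$ is strictly positive. The $\lambda$-coefficient $\frac{1}{p^+}-\frac{1}{s(x)}$ has the opposite sign, but the subcritical term $\lambda m|u|^{s(\cdot)}$ will be absorbed by Young's inequality with Hölder-dual exponents $\frac{t(x)}{s(x)}$ and $\frac{t(x)}{t(x)-s(x)}$ applied to the factorization $(b^{s/t}|u|^s)(|m|b^{-s/t})$, whose second factor lies in $L^{t(\cdot)/(t(\cdot)-s(\cdot))}(\Omega)$ by $\textup{(W)}$. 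I describe $\Psi$; the $J$-case is parallel, the only adjustment being to apply Theorem~\ref{T.CCP} to the sequence $\{u_{n,+}\}$, which is bounded in $E$ with weak limit $u_+$.

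For boundedness in part (i), the dissipation identity combined with Proposition~\ref{le.compact.imb} applied with $\sigma=s$ (available because $s^+<p^-$ and $\textup{(W)}$ holds) yields $\|u_n\|^{p^-}-1\leq\int_\Omega|\nabla u_n|^{p(x)}\diff x\leq C(1+\|u_n\|^{s^+})$, from which boundedness is immediate since $s^+<p^-$. In part (ii), the same identity produces $\theta(\frac{1}{p^+}-\frac{1}{t^-})\int b|u_n|^{t(\cdot)}\leq c+o(1)+o(\|u_n\|)+|\lambda|C_1\int|m||u_n|^{s(\cdot)}$; applying Young with $\epsilon>0$ small enough (depending on $\theta,\lambda$) absorbs the $\lambda$-term into the $\theta$-term and gives $\int b|u_n|^{t(\cdot)}\leq C(1+\|u_n\|)$. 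Combined with $\langle\Psi'(u_n),u_n\rangle=o(\|u_n\|)$, this bounds $\int|\nabla u_n|^{p(\cdot)}$ likewise, and $p^->1$ delivers boundedness of $\{u_n\}$ in $E$.

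For part (i) the remainder is routine: up to a subsequence $u_n\rightharpoonup u$ in $E$, so by Proposition~\ref{le.compact.imb} we get $u_n\to u$ in $L^{s(\cdot)}(|m|,\Omega)$ and $\int m|u_n|^{s(\cdot)-2}u_n(u_n-u)\diff x\to 0$, hence $\int a(x,\nabla u_n)\cdot(\nabla u_n-\nabla u)\diff x\to 0$, and the $(S_+)$ property of Leray--Lions operators \cite[Theorem~4.1]{LKV.2009} gives $u_n\to u$ in $E$. For part (ii), I argue by contradiction via Theorem~\ref{T.CCP}: if an atom $(\mu_i,\nu_i)$ at some $x_i\in\mathcal{C}$ exists, the cutoff $\phi_{i,\epsilon}$ argument of Lemma~\ref{le.ps1} (with the compactness of $\lambda m|u|^{s(\cdot)}$ replacing the role of $\textup{(F4)}$ there) yields $\mu_i\leq\theta\nu_i$; with $S_b\nu_i^{1/t(x_i)}\leq\mu_i^{1/p(x_i)}$ and $\frac{p(x_i)t(x_i)}{t(x_i)-p(x_i)}=N$, this forces $\mu_i\geq S_b^N\theta^{-1/(h(x_i)-1)}$. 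Passing to the limit in the dissipation identity and absorbing the $\lambda$-term into $\theta\int b|u|^{t(\cdot)}$ via Young (producing a residual of the form $K|\lambda|^{\ell(x_i)/(\ell(x_i)-1)}\theta^{-1/(\ell(x_i)-1)}$, since $\frac{t}{t-s}=\frac{\ell}{\ell-1}$ and $\frac{s}{t-s}=\frac{1}{\ell-1}$) gives
\[
c\geq\Big(\frac{1}{p^+}-\frac{1}{t^-}\Big)S_b^N\,\theta^{-\frac{1}{h(x_i)-1}}-K\,|\lambda|^{\frac{\ell(x_i)}{\ell(x_i)-1}}\theta^{-\frac{1}{\ell(x_i)-1}},
\]
which contradicts \eqref{PS_c2} after replacing the unknown pointwise exponents $h(x_i),\ell(x_i)$ by the $\min/\max$ of $h,\ell$ on $\overline{\Omega}$. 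Thus $\mathcal{I}=\emptyset$; the Brézis--Lieb lemma together with Proposition~\ref{le.compact.imb} gives strong convergence in $L^{t(\cdot)}(b,\Omega)$ and $L^{s(\cdot)}(|m|,\Omega)$, and the $(S_+)$ argument again concludes $u_n\to u$ in $E$.

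The main obstacle is the two-parameter balancing in Young's inequality: the single choice of $\epsilon$ must simultaneously absorb the $\lambda$-term in both the boundedness step (where we keep a positive fraction of $\int b|u_n|^{t(\cdot)}$) and the concentration-compactness contradiction (where the residual must combine with $\theta\nu_i$ to yield precisely the two-term expression in \eqref{PS_c2}). The $\min/\max$ bookkeeping over $\overline{\Omega}$ is unavoidable because the atom locations $x_i\in\mathcal{C}$ are not known a priori, so the threshold must dominate $\theta^{-1/(h(x_i)-1)}$ and $\theta^{-1/(\ell(x_i)-1)}|\lambda|^{\ell(x_i)/(\ell(x_i)-1)}$ uniformly in $x_i$.
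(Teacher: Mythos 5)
Your overall architecture — bound the $\textup{(PS)}$-sequence, for $\theta=0$ argue directly via compactness of the subcritical term and the $(S_+)$ property, and for $\theta>0$ invoke the concentration-compactness principle and derive a contradiction with \eqref{PS_c2} via a Young-type minimization — is exactly the paper's. Your derivation of $\mu_i\leq\theta\nu_i$ via the cutoff $\phi_{i,\epsilon}$, the identity $t(x_i)p(x_i)/(t(x_i)-p(x_i))=N$, and the $\min/\max$ bookkeeping over $\overline{\Omega}$ all match; the paper packages the Young absorption as an explicit minimization of $\xi\mapsto a_1\theta\xi^{\ell^\pm}-b_1|\lambda|\xi+k(\theta)$ over the cases $\big\||u|^s\big\|_{L^{\ell(\cdot)}(b,\Omega)}\gtrless 1$, which is the same computation in slightly different clothes and produces the explicit constant $K$.

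There is, however, a genuine slip in your boundedness step for part (i). You claim that the dissipation identity $\Psi(u_n)-\frac{1}{p^+}\langle\Psi'(u_n),u_n\rangle$ yields $\int_\Omega|\nabla u_n|^{p(x)}\diff x\leq C(1+\|u_n\|^{s^+})$. It does not: with coefficient $\frac{1}{p^+}$, condition $\textup{(A3)}$ gives only $\int_\Omega A(x,\nabla u_n)\diff x-\frac{1}{p^+}\int_\Omega a(x,\nabla u_n)\cdot\nabla u_n\diff x\geq 0$, with no quantitative lower bound — the gradient contribution disappears from the inequality rather than being controlled. The paper uses the $\frac{1}{t^-}$-dissipation $J(u_n)-\frac{1}{t^-}\langle J'(u_n),u_n\rangle$, which, again by $\textup{(A3)}$, retains $\left(\frac{1}{p^+}-\frac{1}{t^-}\right)\int_\Omega|\nabla u_n|^{p(x)}\diff x$ on the lower-bound side, keeps the $\theta$-term nonnegative for every $\theta\geq 0$, and so gives boundedness uniformly in $\theta$ in a single stroke, without Young's inequality or a two-step detour. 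Your part (ii) recovery (first bound $\int_\Omega b|u_n|^{t(x)}\diff x$ by Young, then recover the gradient from $\langle\Psi'(u_n),u_n\rangle=o(\|u_n\|)$ and $\textup{(A3)}$) is a valid alternate route; for part (i), since $\theta=0$, the gradient bound can also be read off directly from $\Psi(u_n)\to c$ together with $\int_\Omega A(x,\nabla u_n)\diff x\geq\frac{1}{p^+}\int_\Omega|\nabla u_n|^{p(x)}\diff x$, without any dissipation identity. Either fix closes the gap; as written, the claim attributed to the $\frac{1}{p^+}$-identity is false.
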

\begin{proof}
	We only prove the conclusions for $J$ since those of $\Psi$ are proved similarly. We first show that for $\lambda\in\mathbb{R},\,  \theta\geq 0, $ and $c\in\mathbb{R},$ any $\textup{(PS)}_c$-sequence of $J$ is bounded in $E.$ Indeed, let $\{u_n\}_{n=1}^\infty$ be a $\textup{(PS)}_c$-sequence of $J$.  Then, for $n$ large we have
	\begin{eqnarray*}\label{PL.sand.Est}
		\notag c+1+\|u_n\|&\geq& J(u_n)-\frac{1}{t^-}\langle J'(u_n) ,u_n \rangle \\
		&\geq& \left (\frac{1}{p^+}-\frac{1}{t^-}\right)\int_{\Omega} |\nabla u_n |^{p(x)}\diff x-|\lambda|\left (\frac{1}{s^-}-\frac{1}{t^-}\right)\int_{\Omega}|m(x)||u_n|^{s(x)}\diff x.
	\end{eqnarray*}
On the other hand, by invoking Propositions~\ref{Holder ineq}-\ref{norm-norm} and utilizing \eqref{Sb} we have
\begin{equation*}
\int_{\Omega} |\nabla u_n |^{p(x)}\diff x\geq \|u_n\|^{p^-}-1
\end{equation*}
and
	\begin{align*}\label{Est.int}
		\int_\Omega|m(x)||u_n|^{s(x)}\diff x&\leq 2\left\|mb^{-1}\right\|_{L^{\frac{t(\cdot)}{t(\cdot)-s(\cdot)}}(b,\Omega)}\left\||u_n|^s\right\|_{L^{\frac{t(\cdot)}{s(\cdot)}}(b,\Omega)}\\
		&\leq 2\left\|mb^{-1}\right\|_{L^{\frac{t(\cdot)}{t(\cdot)-s(\cdot)}}(b,\Omega)}\left(\|u_n\|_{L^{t(\cdot)}(b,\Omega)}^{s^+}+1\right)\\
		&\leq 2 \left\|mb^{-1}\right\|_{L^{\frac{t(\cdot)}{t(\cdot)-s(\cdot)}}(b,\Omega)}\left(S_b^{-s^+}\|u_n\|^{s^+}+1\right).
	\end{align*}
From the last three estimations we deduce the boundedness of $\{u_n\}_{n=1}^\infty$ in $E$ since $1<s^+<p^-$. Hence, up to a subsequence we have
\begin{equation}\label{PL.sandwich.weak1}
u_n\rightharpoonup u\ \ \text{in}\ E\ \text{as}\ n\to\infty.	
\end{equation}
(i) \textit{Case $\theta=0,\, \lambda\in\mathbb{R}.$}
Since 
	\begin{align*}
	\int_{\Omega}a(x,\nabla u_n)\cdot (\nabla u_n-\nabla u)\diff x=\langle J'(u_n) ,u_n-u \rangle+\lambda\int_{\Omega}m(x)(u_n^+)^{s(x)-1}(u_n-u)\diff x, 
	\end{align*}
we easily obtain from \eqref{PL.sandwich.weak1} and Proposition~\ref{le.compact.imb} that 
$$\int_{\Omega}a(x,\nabla u_n)\cdot (\nabla u_n-\nabla u)\diff x\to 0\ \text{as}\ n\to\infty.$$
Then, as in the proof of Lemma~\ref{le.ps1} we obtain $u_n\to u$ in $E$ as $n\to\infty$.% due to the $(S_+)$ property of Leray-Lions operator $a$.

\vspace{0.3cm}
(ii) \textit{Case $\theta>0,\, \lambda\in\mathbb{R}.$} By Theorem~\ref{T.CCP} again, we infer from the boundedness of $\{u_n\}_{n=1}^\infty$ that up to a subsequence, 
\begin{eqnarray}
u_n(x) &\to& u(x)  \quad \text{a.e.} \ \ x\in\Omega,\label{PL.sandwich.a.e1}\\
|\nabla u_n|^{p(x)} &\overset{\ast }{\rightharpoonup }&\mu \geq |\nabla u|^{p(x)} + \sum_{i\in \mathcal{I}} \mu_i \delta_{x_i} \ \text{in}\  \mathcal{M}(\overline{\Omega}),\label{PL.sandwich.mu1}\\
b|u_n|^{t(x)}&\overset{\ast }{\rightharpoonup }&\nu=b|u|^{t(x)} + \sum_{i\in \mathcal{I}}\nu_i\delta_{x_i} \ \text{in}\ \mathcal{M}(\overline{\Omega}),\label{PL.sandwich.nu1}\\
S_b \nu_i^{1/t(x_i)} &\leq& \mu_i^{1/p(x_i)}, \quad \forall i\in \mathcal{I}.
\end{eqnarray}
We claim that $\mathcal{I}=\emptyset.$ Suppose by contradiction that there exists $i\in\mathcal{I}$. Arguing as that obtained \eqref{PT.super.ps.mu-nu}, we obtain
\begin{equation}\label{PL.Sandwich.mu-nu}
\theta \nu_i\geq S_b^N\min\left\{\theta^{-\frac{1}{h^+-1}},\theta^{-\frac{1}{h^--1}}\right\}.
\end{equation}
On the other hand, we have
\begin{align*}
c&=J(u_n)-\frac{1}{p^+}\langle J'(u_n) ,u_n\rangle+o_n(1)\\
&\geq -|\lambda| \left(\frac{1}{s^-}-\frac{1}{p^+}\right)\int_{\Omega}|m(x)||u_n|^{s(x)}\diff x+\theta\left(\frac{1}{p^+}-\frac{1}{t^-}\right)\int_{\Omega}b(x)|u_n|^{t(x)}\diff x+o_n(1).
\end{align*}
Passing to the limit as $n\to\infty$ in the last estimation and invoking Proposition~\ref{le.compact.imb} again and using \eqref{PL.sandwich.nu1}, we obtain
\begin{equation*}\label{PL.PS2.c}
c\geq -|\lambda| \left(\frac{1}{s^-}-\frac{1}{p^+}\right)\int_{\Omega}|m(x)||u|^{s(x)}\diff x+\theta\left(\frac{1}{p^+}-\frac{1}{t^-}\right)\left(\int_{\Omega}b(x)|u|^{t(x)}\diff x+\nu_i\right).
\end{equation*}
Invoking \eqref{PL.Sandwich.mu-nu} the preceding inequality gives
$$c\geq \left(\frac{1}{p^+}-\frac{1}{t^-}\right)\theta\int_{\Omega}b(x)|u|^{t(x)}\diff x-|\lambda| \left(\frac{1}{s^-}-\frac{1}{p^+}\right)\int_{\Omega}|m(x)||u|^{s(x)}\diff x+k(\theta),$$
where $k(\theta):=\left(\frac{1}{p^+}-\frac{1}{t^-}\right)S_b^N\min\left\{\theta^{-\frac{1}{h^+-1}},\theta^{-\frac{1}{h^--1}}\right\}$. From Proposition~\ref{Holder ineq}, we have
\begin{equation}\label{S4.int-mu^s}
	\int_{\Omega}|m||u|^{s(x)}\diff x=\int_{\Omega}b\left|mb^{-1}\right||u|^{s(x)}\diff x\leq 2\|mb^{-1}\|_{L^{\frac{\ell(\cdot)}{\ell(\cdot)-1}}(b,\Omega)}\big\||u|^{s}\big\|_{L^{\ell(\cdot)}(b,\Omega)},
\end{equation}
where $\ell (x):=\frac{t(x)}{s(x)}.$ From the last two inequalities, we obtain
\begin{equation}\label{5.estc}
c\geq a_1\theta\int_{\Omega}b\big(|u|^{s(x)}\big)^{\ell(x)}\diff x-b_1|\lambda| \big\||u|^{s}\big\|_{L^{\ell(\cdot)}(b,\Omega)}+ k(\theta),
\end{equation}
where $a_1:=\frac{1}{p^+}-\frac{1}{t^-}>0$ and $b_1:=2\left(\frac{1}{s^-}-\frac{1}{p^+}\right)\|mb^{-1}\|_{L^{\frac{\ell(\cdot)}{\ell(\cdot)-1}}(b,\Omega)}>0.$
\begin{itemize}
	\item If $\big\||u|^{s}\big\|_{L^{\ell(\cdot)}(b,\Omega)}\geq 1,$ then \eqref{5.estc} and Proposition~\ref{norm-modular} yield
\end{itemize}
$$c\geq a_1\theta \xi^{\ell^-}-b_1|\lambda| \xi+k(\theta)=:g_1(\xi)\ \text{with}\ \xi=\big\||u|^{s}\big\|_{L^{\ell(\cdot)}(b,\Omega)}\geq 1.$$
Thus,
$$c\geq \underset{\xi\geq 0}{\min}\ g_1(\xi)=g_1\left(\big(\frac{b_1|\lambda|}{\ell^-a_1\theta}\big)^{\frac{1}{\ell^--1}}\right),$$
i.e.,
$$c\geq k(\theta)-(\ell^--1)(\ell^-)^{-\frac{\ell^-}{\ell^--1}}a_1^{-\frac{1}{\ell^--1}}b_1^{\frac{\ell^-}{\ell^--1}}\theta^{-\frac{1}{\ell^--1}}|\lambda|^{\frac{\ell^-}{\ell^--1}}.$$
\begin{itemize}
	\item If $\big\||u|^{s}\big\|_{L^{\ell(\cdot)}(b,\Omega)}<1,$ then \eqref{5.estc} and Proposition~\ref{norm-modular} yield
\end{itemize}
$$c\geq a_1\theta \xi^{\ell^+}-b_1\lambda \xi+k(\theta)=:g_2(\xi)\ \text{with}\ \xi=\big\||u|^{s}\big\|_{L^{\ell(\cdot)}(b,\Omega)}\in [0,1).$$
Thus,
$$c\geq \underset{\xi\geq 0}{\min}\ g_2(\xi)=g_2\left(\big(\frac{b_1|\lambda|}{\ell^+a_1\theta}\big)^{\frac{1}{\ell^+-1}}\right),$$
i.e.,
$$c\geq k(\theta)-(\ell^+-1)(\ell^+)^{-\frac{\ell^+}{\ell^+-1}}a_1^{-\frac{1}{\ell^+-1}}b_1^{\frac{\ell^+}{\ell^+-1}}\theta^{-\frac{1}{\ell^+-1}}|\lambda|^{\frac{\ell^+}{\ell^+-1}}.$$
Therefore, in any case, we obtain
$$c\geq k(\theta)-K\max\left\{\theta^{-\frac{1}{\ell^+-1}},\theta^{-\frac{1}{\ell^--1}}\right\}\max\left\{|\lambda|^{\frac{\ell^+}{\ell^+-1}},|\lambda|^{\frac{\ell^-}{\ell^--1}}\right\},$$
where $K:=\underset{*\in\{+,-\}}{\max}(\ell^*-1)(\ell^*)^{-\frac{\ell^*}{\ell^*-1}}a_1^{-\frac{1}{\ell^*-1}}b_1^{\frac{\ell^*}{\ell^*-1}}>0$. This contradicts to \eqref{PS_c2} and hence, $\mathcal{I}=\emptyset$. It follows that \ $\int_{\Omega}b(x)|u_n|^{t(x)}\diff x\to \int_{\Omega}b(x)|u|^{t(x)}\diff x$\ as $n\to\infty.$ We then argue as in the proof of Lemma~\ref{le.ps1} to conclude that $u_n\to u$ in $E$ as $n\to\infty.$ The proof is complete.

	\end{proof}

%~~~~~~~~~~~~~~~~~~~~~~~ LEMMA LOCAL GEOMETRY FOR SANDWICH CASE ~~~~~~~~~~~~~~~~~~~~~~~~%

\subsection{Multiplicity of solutions}
In this subsection we borrow the idea from \cite{BBF.2021} to prove Theorem~\ref{Theo.Sandwich-infty}, which asserts the existence of $k$ pairs of solutions  to problem~\eqref{Eq.Sandwich} for an arbitrarily given number $k$. To seek such a $k$ pairs of solutions, we determine the critical points of the functional $\Psi$ defined in \eqref{Psi}. Throughout this subsection, in addition to $\textup{(S)}$, $\textup{(W)}$ and $\textup{(L1)}$ we always assume that $m(x)>0$ a.e. in $B$. We will frequently make use of the following estimates that can be easily obtained by using \eqref{PT.Super.est1I'}, \eqref{S4.int-mu^s} and Proposition~\ref{norm-norm}: there exists a constant $C_1>1$ such that for all $u\in E$ with $\|u\|>1,$
\begin{equation}\label{S4.Est.ints}
	\int_{\Omega}|m(x)||u|^{s(x)}\diff x\leq C_1^{s^+}\|u\|^{s^+}\ \text{and} \  \int_\Omega b(x)|u|^{t(x)} \diff x \leq C_1^{t^+}\|u\|^{t^+}.
\end{equation}
Let $\lambda,\theta>0$ to be specified later. By $(\textup{A3})$, we have
\begin{align*}
	\notag \Psi(u)&\geq \int_\Omega \frac{1}{p(x)}|\nabla u|^{p(x)} \diff x-\lambda\int_\Omega \frac{|m(x)|}{s(x)}|u|^{s(x)}\diff x-\theta\int_\Omega \frac{b(x)}{t(x)}|u|^{t(x)} \diff x\\
	&\geq \frac{1}{p^+}\int_\Omega |\nabla u|^{p(x)} \diff x-\frac{\lambda}{s^-} \int_\Omega |m(x)||u|^{s(x)}\diff x-\frac{\theta}{t^-}\int_\Omega b(x)|u|^{t(x)} \diff x,\quad \forall u\in E.
\end{align*}
By invoking Proposition~\ref{norm-modular} then using \eqref{S4.Est.ints} we deduce from the last inequality that
\begin{align}
	\notag \Psi(u)\geq\frac{1}{p^+}\|u\|^{p^-}-\frac{ C_1^{s^+}\lambda}{s^-}\|u\|^{s^+}-\frac{C_1^{t^+}\theta}{t^-}\|u\|^{t^+}\ \ \text{for all} \ u\in E\ \text{with}\ \|u\|\geq 1.
\end{align}
Thus,
\begin{equation}\label{PTcc.gi}
	\Psi(u)\geq  g_\lambda(\|u\|)\ \ \text{for all} \ u\in E\ \text{with}\ \|u\|\geq1,
\end{equation} 
where $g_\lambda\in C[0,\infty)$ is given by
$$g_\lambda(\tau):=\frac{1}{p^+}\tau^{p^-}-\frac{ C_1^{s^+}\lambda}{s^-}\tau^{s^+}-\frac{C_1^{t^+}\theta}{t^-}\tau^{t^+}.$$
Rewrite $g_\lambda(\tau)=\frac{C_1^{s^+}\tau^{s^+}}{s^-}\left(h(\tau)-\lambda\right)$ with $$h(\tau):=a_0\tau^{p^--s^+}-b_0\theta \tau^{t^+-s^+},$$ 
where 
\begin{equation}\label{a0}
a_0:=(p^+C_1^{s^+})^{-1}s^->0,\ b_0:=(t^-)^{-1}s^-C_1^{t^+-s^+}>0.
\end{equation}
It is clear that 
\begin{align}\label{Sandwich2.lambda3}
	\notag\max_{\tau\geq 0}\, h(\tau)=h\left(\left[\frac{(p^--s^+)a_0}{(t^+-s^+)b_0\theta}\right]^{\frac{1}{t^+-p^-}}\right)=c_0\theta^{\frac{s^+-p^-}{t^+-p^-}}>0,
\end{align}
where $c_0:=a_0^{\frac{t^+-s^+}{t^+-p^-}}b_0^{\frac{s^+-p^-}{t^+-p^-}}\left(\frac{p^--s^+}{t^+-s^+}\right)^{\frac{p^--s^+}{t^+-p^-}}\frac{t^+-p^-}{t^+-s^+}>0.$ Thus, for $\lambda\in (0,\lambda_0(\theta))$ with
\begin{equation}\label{S4.lambda0(theta)}
\lambda_0(\theta):=c_0\theta^{\frac{s^+-p^-}{t^+-p^-}},
\end{equation}
$g_\lambda(\tau)$ has only positive roots $T_0$ and $T_1$ with
\begin{equation}\label{T0-1}
	0<T_0<T_*:=\left[\frac{(p^--s^+)a_0}{(t^+-s^+)b_0\theta}\right]^{\frac{1}{t^+-p^-}}=d_0\theta^{-\frac{1}{t^+-p^-}}<T_1,
\end{equation}
where $d_0:=\left[\frac{(p^--s^+)a_0}{(t^+-s^+)b_0}\right]^{\frac{1}{t^+-p^-}}>0$. Obviously, on $[0,\infty)$ the function $g_\lambda(\tau)$ is negative only on $(0,T_0)\cup (T_1,\infty)$. It is clear that \begin{equation}\label{PT.Sand.T0}
	T_0>\left(\frac{\lambda}{a_0}\right)^{\frac{1}{p^--s^+}}
\end{equation}
 and for $\theta\in (0,\theta_*)$ with
\begin{equation}\label{PT.Sand.lbd1}
\theta_*:=c_0^{\frac{t^+-p^-}{p^--s^+}}a_0^{\frac{t^+-p^-}{s^+-p^-}},
\end{equation}
we have that $\lambda_0(\theta)>a_0$. From the above analysis, we infer that for $\theta\in (0,\theta_*)$ and $\lambda\in (a_0, \lambda_0(\theta))$,  $g_\lambda(\tau)$ has only positive roots $T_0$ and $T_1$ with $1<T_0<T_*<T_1$.

 \vspace{0.3cm}
 Let $\theta\in (0,\theta_*)$. For each $\lambda\in (a_0, \lambda_0(\theta))$, we define the truncated functional $T_\lambda: E\to\R$ as
\begin{align*}\label{T_lambda}
	\notag T_\lambda(u):=&\int_\Omega A(x,\nabla u) \diff x-\lambda\int_\Omega \frac{m(x)}{s(x)}|u|^{s(x)} \diff x-\phi(\|u\|) \theta \int_\Omega \frac{b(x)}{t(x)}|u|^{t(x)} \diff x,\ u\in E,
\end{align*}
where $\phi\in C_c^\infty(\R)$, $0\leq \phi(\tau)\leq 1$ for all $\tau\in\R$, $\phi(\tau)=1$ for $\tau\leq T_0$ and $\phi(\tau)=0$ for $\tau\geq T_1$. Note that $u\mapsto \phi (\|u\|)$ is of class $C^1(E,\R)$ due to \cite[Corollary 3.3]{Lang2016} and therefore, $T_\lambda\in C^1(E,\R)$. Moreover, it holds that
\begin{equation}\label{T_lambda.Est1}
	T_\lambda(u)\geq \Psi(u)\ \ \text{for all} \ \ u\in E,
\end{equation}
\begin{equation}\label{T_lambda.Est2}
	T_\lambda(u)=\Psi(u) \ \ \text{for all} \ \ u\in E \ \ \text{with} \ \ \|u\|<T_0,
\end{equation}
and
\begin{equation}\label{T_lambda.Est3}
	T_\lambda(u)=\int_\Omega A(x,\nabla u) \diff x-\lambda\int_\Omega \frac{m(x)}{s(x)}|u|^{s(x)} \diff x \ \ \text{for all} \ \ u\in E \ \ \text{with} \ \ \|u\|>T_1.
\end{equation}
%Set
%\begin{equation}\label{S4.lambda2(theta)}
%\lambda_2(\theta):=\frac{s^-c_0^{p^--s^+}}{p^+C_1^{s^+}}\theta^{-\frac{p^--s^+}{t^+-p^-}}.
%\end{equation}
%By choosing $\theta\in (0,\theta_2)$ with
%\begin{equation}
%\theta_2:=\left(\frac{s^-c_0^{p^--s^+}}{p^+C_1^{s^+}}a_0^{-1}\right)^{\frac{t^+-p^-}{p^--s^+}},
%\end{equation}
%we have $a_0<\lambda_2(\theta)$ with $a_0$ given by \eqref{PT.Sand.lbd1}. Then we have the following.

%Let $\theta\in (0,\min\{\theta_*,\theta_2\})$ and let $\lambda\in (a_0,\min\{\lambda_0(\theta),\lambda_2(\theta)\})$. Then, we have $\|u\|<T_0$ and hence, $T_\lambda(u)=\Psi(u)$ whenever $T_\lambda(u)<0$.
%\end{lemma}
We have the following.
\begin{lemma}\label{T_lambda(u)<0}
	Let $\theta\in (0,\theta_*)$ and let $\lambda\in (a_0,\lambda_0(\theta))$. Then, $T_\lambda(u)<0$ implies that $\|u\|<T_0$ and hence, $T_\lambda(u)=\Psi(u)$.
\end{lemma}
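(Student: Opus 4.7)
The plan is to show the contrapositive: if $\|u\|\geq T_0$, then $T_\lambda(u)\geq 0$. Since $\phi\equiv 1$ on $[0,T_0]$, the converse implication $T_\lambda(u)=\Psi(u)$ whenever $\|u\|<T_0$ is immediate from \eqref{T_lambda.Est2}, so all the work lies in proving the bound.

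First I would record, using $(\textup{A3})$, Proposition~\ref{norm-modular}, and the estimates \eqref{S4.Est.ints}, the pointwise lower bound
\[
T_\lambda(u)\geq \frac{1}{p^+}\|u\|^{p^-}-\frac{\lambda C_1^{s^+}}{s^-}\|u\|^{s^+}-\phi(\|u\|)\,\frac{\theta C_1^{t^+}}{t^-}\|u\|^{t^+}=:\widetilde g_\lambda(\|u\|)
\]
valid for every $u\in E$ with $\|u\|\geq 1$. Note that for $\lambda\in (a_0,\lambda_0(\theta))$ the inequality \eqref{PT.Sand.T0} gives $T_0>(\lambda/a_0)^{1/(p^--s^+)}>1$, so all the values of $\|u\|\geq T_0$ we care about automatically satisfy $\|u\|\geq 1$.

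Next I would split the region $\{\tau\geq T_0\}$ according to the support properties of $\phi$. On the interval $[T_0,T_1]$ we have $0\leq \phi(\tau)\leq 1$, hence
\[
\widetilde g_\lambda(\tau)\geq \frac{1}{p^+}\tau^{p^-}-\frac{\lambda C_1^{s^+}}{s^-}\tau^{s^+}-\frac{\theta C_1^{t^+}}{t^-}\tau^{t^+}=g_\lambda(\tau),
\]
and the factorisation $g_\lambda(\tau)=\tfrac{C_1^{s^+}\tau^{s^+}}{s^-}\bigl(h(\tau)-\lambda\bigr)$ together with the characterisation of the roots $T_0,T_1$ of $g_\lambda$ in \eqref{T0-1} yields $g_\lambda(\tau)\geq 0$ on $[T_0,T_1]$. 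On the other hand, for $\tau\geq T_1$ the cutoff gives $\phi(\tau)=0$, so
\[
\widetilde g_\lambda(\tau)=\frac{1}{p^+}\tau^{p^-}-\frac{\lambda C_1^{s^+}}{s^-}\tau^{s^+}=\frac{\tau^{s^+}}{p^+}\Bigl(\tau^{p^--s^+}-\frac{\lambda}{a_0}\Bigr),
\]
with $a_0$ as in \eqref{a0}, and by \eqref{PT.Sand.T0} we have $\tau\geq T_1>T_0>(\lambda/a_0)^{1/(p^--s^+)}$, so this expression is nonnegative as well.

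Combining the two cases, $\widetilde g_\lambda(\tau)\geq 0$ for every $\tau\geq T_0$, hence $T_\lambda(u)\geq\widetilde g_\lambda(\|u\|)\geq 0$ whenever $\|u\|\geq T_0$. Contrapositively, $T_\lambda(u)<0$ forces $\|u\|<T_0$, and then \eqref{T_lambda.Est2} delivers $T_\lambda(u)=\Psi(u)$. The only delicate point, which is really just bookkeeping, is matching the two regimes of $\phi$ with the sign information on $g_\lambda$; the inequality $T_0>(\lambda/a_0)^{1/(p^--s^+)}$ (which is precisely why one needs to restrict $\lambda>a_0$) is the item that makes the tail $\tau\geq T_1$ work.
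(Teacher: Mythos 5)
Your proof is correct and uses exactly the same ingredients as the paper's: the lower bound on $T_\lambda$ coming from $(\textup{A3})$, Proposition~\ref{norm-modular} and \eqref{S4.Est.ints}; the fact that $g_\lambda\geq 0$ on $[T_0,T_1]$; the vanishing of the cutoff for $\tau\geq T_1$; and the key inequality $T_0>(\lambda/a_0)^{1/(p^--s^+)}$ from \eqref{PT.Sand.T0}. The only difference is organizational: the paper argues by contradiction, first passing through $\Psi$ via \eqref{T_lambda.Est1} and \eqref{PTcc.gi} to deduce $\|u\|>T_1$, and then contradicting $\|u\|\geq T_0$; you instead show the contrapositive directly by packaging the cutoff into the single auxiliary function $\widetilde g_\lambda$ and treating the two regimes $[T_0,T_1]$ and $[T_1,\infty)$ in one sweep, which is a slightly cleaner presentation of the same argument.
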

\begin{proof}
	Suppose that $T_\lambda(u)<0$. Then, we have $\Psi(u)<0$ due to \eqref{T_lambda.Est1}. Suppose on the contrary that  $\|u\|\geq T_0$. Note that $T_0>1$ and hence, \eqref{PTcc.gi} gives $g_\lambda(\|u\|)<0$. Thus, $\|u\|>T_1$ and hence; $T_\lambda(u)=\int_\Omega A(x,\nabla u) \diff x-\lambda\int_\Omega \frac{m(x)}{s(x)}|u|^{s(x)} \diff x<0$ due to \eqref{T_lambda.Est3}. On the other hand, the last inequality combining with $(\textup{A3})$ and \eqref{S4.Est.ints} gives
	\begin{equation*}
		\frac{1}{p^+}\|u\|^{p^-}\leq \frac{1}{s^-}C_1^{s^+}\lambda\|u\|^{s^+}.
	\end{equation*}
This yields
\begin{equation*}
T_0\leq \|u\|\leq \left(\frac{p^+C_1^{s^+}\lambda}{s^-}\right)^{\frac{1}{p^--s^+}},
\end{equation*}
which contradicts to \eqref{PT.Sand.T0}. Thus, $\|u\|<T_0$ and hence, $T_\lambda(u)=\Psi(u)$ due to \eqref{T_lambda.Est2}.
%\begin{equation*}
%	\frac{s^-}{p^+C_1^{s^+}}\|u\|^{p^--s^+}\leq \lambda.
%\end{equation*}
%From this and \eqref{T0-1} we arrive at
%\begin{equation*}
%	\lambda\geq \frac{s^-}{p^+C_1^{s^+}}T_1^{p^--s^+}>\frac{s^-c_0^{p^--s^+}}{p^+C_1^{s^+}}\theta^{-\frac{p^--s^+}{t^+-p^-}}= \lambda_2(\theta),
%\end{equation*}
%which is absurd; hence, $T_\lambda(u)=\Psi(u)$ due to \eqref{T_lambda.Est2}. 
The proof is complete.
	\end{proof}
	
Next, by employing genus theory we will show that for any given number $k\in\N$, $T_\lambda$ admits at least $k$ pairs of critical points $\{\pm u_n\}_{n=1}^k$ with $T_\lambda(\pm u_n)<0$ provided $\theta$ is sufficiently small. Let $\Sigma$ denote the family of sets $A\subset E\setminus\{0\}$ such that $A$ is closed in $E$ and symmetric with respect to the origin. For $A\in\Sigma$, define the genus of $A$ to be $n$ (denoted by $\gamma(A)=n$) if there is a map $\varphi\in C(A,\R^n\setminus\{0\})$ and $n$ is the smallest integer with this property. When there does not exist a finite such n, set $\gamma(A)=\infty$. Finally set $\gamma(\emptyset) = 0.$  For each $k\in\N$, define
$$\Sigma_k:=\{A\in\Sigma:\ \gamma(A)\geq k\}.$$ 
Clearly, $\partial(H\cap B_\rho)\in\Sigma_k$ for any $k$-dimentional subspace $H$ of $E$ and for any $\rho>0$ (see \cite[Proposition 7.7]{Rab1986}). Thus, we can define
\begin{equation}\label{S4.c_k}
	c_{k}(\lambda,\theta):= \inf_{A\in \Sigma_k}\, \sup_{u \in
		A}\, T_\lambda(u).
\end{equation}
Clearly, $c_{k}(\lambda,\theta)\leq 	c_{k+1}(\lambda,\theta)$ for all $k\in\N$. We will show that with $\lambda, \theta$ specified later, $c_{k}(\lambda,\theta)$ are negative critical values of $T_\lambda$ and hence, of $\Psi$. To this end, let $E_{k}$ be a $k$-dimensional subspace of $E$ given by \eqref{Ek} with $B$ in $(\textup{L}1)$. Since all norms on $E_k$ are mutually equivalent, we find $\delta_k>1$ such that
\begin{equation}\label{equi.norms}
	\delta_k^{-1}\max\left\{\|\nabla u\|_{L^{p_B^+}(B)},\|\nabla u\|_{L^{q_B}(B)}\right\}\leq \|u\|\leq \delta_k\|u\|_{L^{s(\cdot)}(m,B)},\quad \forall u\in E_k.
\end{equation}
It is clear that we can choose $\{\delta_k\}_{k=1}^\infty$ such that $\delta_k<\delta_{k+1}$ for all $k\in\N$.  Recalling $a_0$ given by \eqref{a0}, we have the following.
 \begin{lemma}\label{Le.c_k<0}
	There exists $\{\lambda_k\}_{k=1}^\infty$ with $\max\{1,a_0\}<\lambda_k<\lambda_{k+1}$ for all $k\in\N$ such that for each $k\in\N$,  it holds that
	\begin{equation*}
		-\infty<c_k(\lambda,\theta)<0
	\end{equation*} 
for any $\lambda>\lambda_k$ and $\theta>0$.
 \end{lemma}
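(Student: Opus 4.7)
The plan has two main parts: first, show that $T_\lambda$ is bounded below on $E$ to get $c_k(\lambda,\theta)>-\infty$; second, construct a test set in $\Sigma_k$ on which $T_\lambda$ is strictly negative, giving $c_k(\lambda,\theta)<0$ for $\lambda$ sufficiently large; finally, extract a strictly increasing sequence $\{\lambda_k\}$ lying above $\max\{1,a_0\}$.

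For the lower bound, on the region $\|u\|>T_1$ the truncation kills the critical term, so $(\textup{A3})$, Proposition~\ref{norm-modular} and the first inequality in \eqref{S4.Est.ints} yield the coercive estimate $T_\lambda(u)\geq \tfrac{1}{p^+}\|u\|^{p^-}-\tfrac{C_1^{s^+}\lambda}{s^-}\|u\|^{s^+}-\tfrac{1}{p^+}$ (coercive since $s^+<p^-$). On the bounded region $\|u\|\leq T_1$, $T_\lambda$ is continuous and hence bounded. Therefore $\inf_E T_\lambda>-\infty$ and $c_k(\lambda,\theta)\geq \inf_E T_\lambda>-\infty$.

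For the upper bound I would use the test set $\partial(E_k\cap B_{\rho_k})\in\Sigma_k$ with the simplest choice $\rho_k=1$. Functions $u\in E_k$ are supported in $\overline{B}$ by construction, so $\int_\Omega A(x,\nabla u)\diff x=\int_B A(x,\nabla u)\diff x$, and combining \eqref{A6} with \eqref{equi.norms} produces, for $\|u\|=1$,
\[
\int_\Omega A(x,\nabla u)\diff x\leq C_B\delta_k^{q_B}+C_p\delta_k^{p_B^+}.
\]
Dropping the nonnegative critical contribution and bounding the subcritical integral from below,
\[
T_\lambda(u)\leq C_B\delta_k^{q_B}+C_p\delta_k^{p_B^+}-\frac{\lambda}{s^+}\int_B m(x)|u|^{s(x)}\diff x.
\]
The crucial step is the uniform positivity
\[
\beta_k:=\min_{v\in E_k,\,\|v\|=1}\int_B m(x)|v|^{s(x)}\diff x>0,
\]
which I would prove via finite-dimensional compactness plus the hypothesis $m>0$ a.e.\ in $B$ imposed throughout this subsection: any minimizer on the unit sphere of $E_k$ would have to vanish on $\{m>0\}\cap B$, hence a.e.\ in $B$, and since its support is already contained in $\overline{B}$, this forces it to be zero in $E$, contradicting $\|v\|=1$. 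With $\bar\lambda_k:=s^+\beta_k^{-1}(C_B\delta_k^{q_B}+C_p\delta_k^{p_B^+})$ one then has $T_\lambda<0$ on $\partial(E_k\cap B_1)$ for every $\lambda>\bar\lambda_k$, so $c_k(\lambda,\theta)<0$.

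Finally, to produce the required sequence I would set $\lambda_k:=\max\{\bar\lambda_k,\max\{1,a_0\}\}+k$. Since $\{\delta_k\}$ is chosen nondecreasing and $\beta_k$ is nonincreasing in $k$ (as $E_k\subset E_{k+1}$, the minimum is over a larger set), $\bar\lambda_k$ is nondecreasing, and the $+k$ term enforces $\lambda_k<\lambda_{k+1}$ strictly while $\lambda_k>\max\{1,a_0\}$. The main conceptual obstacle is precisely the positivity $\beta_k>0$ via finite-dimensional compactness; everything else is elementary estimation, and note that no smallness assumption on $\theta$ is required because the critical contribution only strengthens the upper bound.
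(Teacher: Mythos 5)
Your proof is correct in its conclusions but takes a genuinely different route from the paper's, and there is one small slip in the lower-bound argument worth repairing.

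The paper treats a whole sphere of radii. It bounds $T_\lambda(u)$ on $E_k$ with $\|u\|=\tau>\delta_k$ by a one-variable function $\tau^{q_B}\bar h_\lambda(\tau)$, optimizes $\bar h_\lambda$ at an explicit radius $T^*=T^*(\lambda,k)$, and determines thresholds $C_1(B)\delta_k^{2s_B^-}$, $C_2(B)\delta_k^{2p_B^+}$ for $\lambda$ that guarantee simultaneously $\bar h_\lambda(T^*)<0$ and $T^*>\delta_k$; the sequence $\lambda_k:=C_3(B)\delta_k^{2p_B^+}$ is then fully explicit. You instead freeze the test sphere at radius $1$ and replace the explicit two-exponent analysis by the qualitative constant $\beta_k:=\min_{v\in E_k,\,\|v\|=1}\int_B m\,|v|^{s(x)}\diff x>0$, proved positive via finite-dimensional compactness and $m>0$ a.e.\ in $B$ together with $\operatorname{supp} v\subset\overline B$. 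This is a clean and valid alternative (your monotonicity arguments for $\delta_k$ and $\beta_k$, and the final $+k$ device, do produce a strictly increasing sequence above $\max\{1,a_0\}$). What the paper's route buys is the closed form $\lambda_k=C_3(B)\delta_k^{2p_B^+}$, which is then fed into the explicit choice of $\theta_k$ in the proof of Theorem~\ref{Theo.Sandwich-infty}; your $\lambda_k$ involves the nonconstructive $\beta_k$, so the downstream computations would need to be rephrased, though the lemma as stated is proved.

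One genuine (but minor) flaw in your lower-bound step: the phrase ``on the bounded region $\|u\|\le T_1$, $T_\lambda$ is continuous and hence bounded'' is not valid in an infinite-dimensional space --- a closed ball in $E$ is not compact, so continuity alone does not yield boundedness. The correct justification is that on $\{\|u\|\le T_1\}$ all three integrals in $T_\lambda$ are bounded in absolute value by quantities depending only on $T_1$ (via $(\textup{A3})$, \eqref{A2'}, and \eqref{S4.Est.ints}), hence $T_\lambda$ is bounded there; combined with your coercive estimate on $\{\|u\|>T_1\}$ this gives $\inf_E T_\lambda>-\infty$. This is essentially what the paper compresses into one sentence (``since $s^+<p^-$''), and your conclusion is right once the justification is phrased this way.
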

\begin{proof}%[\textbf{Proof of Lemma~\ref{genus.k}}]
	Let $k\in\N$ be arbitrary and fixed and let $\lambda,\theta>0$ with $\lambda$ be specified later. By the definition of $T_{\lambda}$ and the fact that $s^+<p^-$, we easily see that $T_{\lambda}$ is bounded from below and hence, $$c_k(\lambda,\theta)>-\infty.$$ 
Let $u\in E_k$ with $\|u\|=\tau>\delta_k$ (hence, $\|u\|_{L^{s(\cdot)}(m,B)}>1$ due to \eqref{equi.norms}). We have
	\begin{align*}
		T_\lambda(u)\leq C_p\|\nabla u\|_{L^{p_B^+}(B)}^{p^+_B}+C_B\|\nabla u\|_{L^{q_B}(B)}^{q_B}-\frac{\lambda }{s^+}\|u\|_{L^{s(\cdot)}(m,B)}^{s_B^-}
	\end{align*}
in view of \eqref{A6} and Proposition~\ref{norm-modular}.
From this and \eqref{equi.norms} we obtain
	\begin{align}\label{T}
		T_\lambda(u)\leq C_p(\delta_k\tau)^{p^+_B}+C_B(\delta_k\tau)^{q_B}-\frac{\lambda }{s^+}(\delta_k^{-1}\tau)^{s_B^-}=\tau^{q_B}\bar{h}_\lambda(\tau),
	\end{align}
where
\begin{equation*}
	\bar{h}_\lambda(\tau):=\alpha_k\tau^{p^+_B -q_B}+\beta_k-\gamma_k\lambda\tau^{s^-_B -q_B}
\end{equation*}
with $\alpha_k:=C_p\delta_k^{p^+_B}$, $\beta_k:=C_B\delta_k^{q_B}$ and $\gamma_k:=(s^+)^{-1}\delta_k^{-s^-_B}$. Denote 
\begin{equation}\label{T^*}
	T^*:=\left[\frac{(s_B^--q_B)\gamma_k\lambda}{(p_B^+-q_B)\alpha_k}\right]^{\frac{1}{p_B^+-s_B^-}}.
\end{equation}
Then, for $$\lambda>C_1(B)\delta_k^{2s_B^-}$$
 with $C_1(B):=\left(\frac{C_B(p_B^+-q_B)}{p_B^+-s_B^-}\right)^{\frac{p_B^+-s_B^-}{p_B^+-q_B}}\left(\frac{C_p(p_B^+-q_B)}{s_B^--q_B}\right)^{\frac{s_B^--q_B}{p_B^+-q_B}}s^+$, it holds that
\begin{equation}\label{bar-h(T*)}
	\bar{h}_\lambda(T^*)=\beta_k-\frac{p_B^+-s_B^-}{p_B^+-q_B}\left(\frac{s_B^--q_B}{p_B^+-q_B}\right)^{\frac{s_B^--q_B}{p_B^+-s_B^-}}\alpha_k^{-\frac{s_B^--q_B}{p_B^+-s_B^-}}\gamma_k^{\frac{p_B^+-q_B}{p_B^+-s_B^-}}\lambda^{\frac{p_B^+-q_B}{p_B^+-s_B^-}}<0.
\end{equation}
From \eqref{T^*} we have
\begin{equation}\label{tilde-lambda_1}
	T^*>\delta_k \Longleftrightarrow \lambda>C_2(B)\delta_k^{2p_B^+}
\end{equation}
with $C_2(B):=\frac{(p_B^+-q_B)C_ps^+}{s_B^--q_B}$. Set
\begin{equation}\label{S4.lambda_*}
	\lambda_k:=C_3(B)\delta_k^{2p_B^+},
\end{equation}
where $C_3(B):=\max\left\{1,a_0,C_1(B),C_2(B)\right\}$ with $a_0$ given by \eqref{PT.Sand.lbd1}. Then, we have that $\lambda_k>\max\left\{1,a_0\right\}$ since $\delta_k>1$. Moreover, for $\lambda>\lambda_k$ it holds that $T^*>\delta_k$ due to \eqref{tilde-lambda_1} and therefore,
\begin{align*}\label{T<0}
	T_\lambda(u)\leq (T^*)^{q_B}\bar{h}_\lambda(T^*)=:-\epsilon_k<0, \ \ \forall\, u\in \partial B_{T^*}\cap E_k
\end{align*}
 in view of \eqref{T} and \eqref{bar-h(T*)}. Thus, $\partial B_{T^*}\cap E_k\subset \{u \in E: T_\lambda(u) \leq -\epsilon_k\}=:T_\lambda^{-\epsilon_k}$.  Note that $T_\lambda^{-\epsilon_k}\in\Sigma$ due to $(\textup{A0})$ and the definition of $T_\lambda$. Hence, we arrive at
 $$\gamma( T_\lambda^{-\epsilon_k})\geq \gamma(\partial B_{T^*}\cap E_k)=k
 $$
 (see \cite[Proposition 7.7]{Rab1986}). In other words, $T_\lambda^{-\epsilon_k}\in \Sigma_k$ and thus, 
 $$
 c_k(\lambda,\theta)= \inf_{A\in
 	\Sigma_k} \sup_{u \in A}\, T_{\lambda}(u) \leq
 \sup_{u\in T_{\lambda}^{-\epsilon_k}}\, T_{\lambda}(u) \leq
 -\epsilon_k <0.
 $$
Finally, it is obvious that this sequence $\{\lambda_k\}_{k=1}^\infty$ satisfies $\lambda_k<\lambda_{k+1}$ for all $k\in\N$. The proof is complete.
 
	\end{proof}
Let $\{\lambda_k\}_{k=1}^\infty$ be defined as in \eqref{S4.lambda_*}. Note that with $\lambda_0(\theta)$  defined by \eqref{S4.lambda0(theta)} we have
\begin{equation*}
	\lambda_k=\lambda_0(\theta)=c_0\theta^{\frac{s^+-p^-}{t^+-p^-}} \ \Longleftrightarrow \  \theta=[c_0^{-1}C_3(B)]^{\frac{t^+-p^-}{s^+-p^-}}\delta_k^{-\frac{2p_B^+(t^+-p^-)}{p^--s^+}}.
\end{equation*}
%Thus, for
%\begin{equation}\label{S4.theta2}
%	\bar{\theta}_k:=\min\left\{\theta_*,[c_0^{-1}C_3(B)]^{\frac{t^+-p^-}{s^+-p^-}}\right\}\delta_k^{-\frac{2p_B^+(t^+-p^-)}{p^--s^+}}
%\end{equation}we have 
%\begin{equation}\label{S4.Estlambda^*}
%	\lambda_k<\lambda_0(\theta),\quad \forall \theta\in (0,\bar{\theta}_k).
%\end{equation}
%From the above analysis, for $\theta\in(0,\bar{\theta}_k)$ and $\lambda\in (\lambda_k,\lambda_0(\theta))$  we obtain
Moreover, for $\theta\in (0,1)$ and $\lambda>1$, the inequality 
\begin{align}\label{S4.lambda}
	0<\left(\frac{1}{p^+}-\frac{1}{t^-}\right)S_b^N&\min\left\{\theta^{-\frac{1}{h^+-1}},\theta^{-\frac{1}{h^--1}}\right\}\notag\\
	&-K\max\left\{\theta^{-\frac{1}{\ell^+-1}},\theta^{-\frac{1}{\ell^--1}}\right\}\max\left\{|\lambda|^{\frac{\ell^+}{\ell^+-1}},|\lambda|^{\frac{\ell^-}{\ell^--1}}\right\}
\end{align}
is equivalent to
\begin{equation}\label{S4.bar-br-lambda0(theta)}
	\lambda<\left[K^{-1}\left(\frac{1}{p^+}-\frac{1}{t^-}\right)S_b^N\right]^{\frac{\ell^--1}{\ell^-}}\theta^{-\frac{\ell^--h^+}{(h^+-1)\ell^-}}=e_0\theta^{-\frac{\ell^--h^+}{(h^+-1)\ell^-}}=:\bar{\lambda}_0(\theta)
\end{equation}
with $e_0:=\left[K^{-1}\left(\frac{1}{p^+}-\frac{1}{t^-}\right)S_b^N\right]^{\frac{\ell^--1}{\ell^-}}$. We also note that
\begin{equation*}
	\lambda_k=\bar{\lambda}_0(\theta) \ \Longleftrightarrow \  \theta=[e_0^{-1}C_3(B)]^{\frac{(h^+-1)\ell^-}{h^+-\ell^-}}\delta_k^{-\frac{2p_B^+(h^+-1)\ell^-}{\ell^--h^+}}.
\end{equation*}
Thus, by taking
\begin{equation}\label{S4.theta*}
	\theta_k:=\min\left\{1,\theta_*,[c_0^{-1}C_3(B)]^{\frac{t^+-p^-}{s^+-p^-}},[e_0^{-1}C_3(B)]^{\frac{(h^+-1)\ell^-}{h^+-\ell^-}}\right\}\delta_k^{-\kappa}
\end{equation}
with $\kappa:=\max\left\{\frac{2p_B^+(t^+-p^-)}{p^--s^+},\frac{2p_B^+(h^+-1)\ell^-}{\ell^--h^+}\right\}$ and setting
\begin{equation}\label{S4.lambda^*}
	\lambda^*(\theta):=\min \left\{\lambda_0(\theta),\bar{\lambda}_0(\theta)\right\},
\end{equation}
we deduce from the definitions of $\lambda_0(\theta)$ and $\bar{\lambda}_0(\theta)$ in \eqref{S4.lambda0(theta)} and \eqref{S4.bar-br-lambda0(theta)} that $\theta_k>0$ is independent of $\lambda$ and for any $\theta\in (0,\theta_k)$, 
\begin{equation*}
	\theta<\min\{1,\theta_*\} \ \text{and}\ \lambda_k<\lambda^*(\theta).
\end{equation*}
We are now ready to prove Theorem~\ref{Theo.Sandwich-infty}.
\begin{proof}[\textbf{Proof of Theorem~\ref{Theo.Sandwich-infty}}]
	Let $k\in\N$ and let $\lambda_k$ and $\theta_k$ be defined as in \eqref{S4.lambda_*} and \eqref{S4.theta*}, respectively. Let $\theta\in (0,\theta_k)$ be given and let $\lambda\in (\lambda_k,\lambda^*(\theta))$ with $\lambda^*(\theta)$ given by \eqref{S4.lambda^*}. Let $\{c_n(\lambda,\theta)\}_{n=1}^\infty$ be defined as in \eqref{S4.c_k}; for simplicity of notation, we write $c_n$ in place of $c_n(\lambda,\theta)$. By the definitions of $\{\theta_n\}_{n=1}^\infty$ and  $\{\lambda_n\}_{n=1}^\infty$, for any $n\in \{1,\cdots,k\}$ we have that $0<\theta<\theta_n<1$ and $\lambda_n<\lambda<\lambda^*(\theta))$ as well. By Lemma~\ref{Le.c_k<0}, it follows that $-\infty<c_n<0.$ From this and \eqref{S4.lambda}-\eqref{S4.bar-br-lambda0(theta)} we infer that $T_\lambda$ satisfies  the $\textup{(PS)}_{c_n}$ condition in view of Lemmas~\ref{le.sandwich.PS} and \ref{T_lambda(u)<0}. Suppose that $c_n=\cdots=c_k=c$ for some $n\in \{1,\cdots,k\}$. Then, it follows from the fact that $T_\lambda$ satisfies the $\textup{(PS)}_{c_n}$ condition  that $K_{c}:=\{u \in E\backslash\{0\}: T_\lambda'(u)=0  \text{ and } T_\lambda(u)=c\}$ is a compact set. By a standard argument using the deformation lemma and properties of genus (see e.g., \cite[Proof of Theorem 1]{BBF.2021}) we obtain 
	$$ \gamma(K_{c})\geq k-n+1.$$
	This implies that either $\{c_n\}_{n=1}^k$ are $k$ distinct critical values of $T_\lambda$ or $K_{c_k}$ admits infinitely many points. From this and the fact that  $c_1\leq c_2\leq \cdots\leq c_k<0$, $\Psi$ admits at least $k$ pairs of critical points $\{\pm u_n\}_{n=1}^k$ with $\Psi(\pm u_n)<0$ for $n=1,\cdots,k$ in view of Lemma~\ref{T_lambda(u)<0}. Consequently, problem~\eqref{Eq.Sandwich} admits at least $k$ pair of solutions $\{\pm u_n\}_{n=1}^k$ with negative energy. The proof is complete.

\end{proof}
\begin{remark}\label{compareBBF}\rm
	As we saw in the proof of Theorem~\ref{Theo.Sandwich-infty}, the range of $\theta$ to get at least $k$ pair of solutions is $(0,\theta_k)$. Since $\theta_k\to 0$ as $k\to\infty$, we cannot conclude that problem~\eqref{Eq.Sandwich} admits a sequence of solution with a certain range of $\theta$. This similarly happens with the proof of \cite[Theorem 1]{BBF.2021}. Indeed, \cite[Eq. (70)]{BBF.2021} gives the range of $\|K\|_{\infty}$ of the form $\|K\|_\infty<Cd_j^\alpha$ for some positive constants $C,\alpha$ independent of $d_j$, where $d_j$ is given by \cite[Eq. (67)]{BBF.2021} and possibly satisfies that $d_j\to 0$ as $j\to\infty$.
\end{remark}

\subsection{Nontrivial nonnegative solutions}
In this subsection, we will prove Theorem~\ref{Theo.Sandwich}. We will apply the Ekeland variational principle to determine critical points of the functional $J$ given by \eqref{J}, that are in turn nonnegative solutions  to problem~\eqref{Eq.Sandwich}. Throughout this subsection, in addition to $\textup{(S)}$, $\textup{(W)}$ and $\textup{(L1)}$, we always assume that $\textup{(L2)}$ holds.

We start with the following geometry of $J$.
\begin{lemma}\label{le.sandwhich.local_geo}
	%Let $\textup{(A)}$ hold. 
	For each given $\lambda>\lambda_\star$, there exists $\widetilde{\theta}_\ast>0$ such that for any $\theta\in (-\widetilde{\theta}_\ast,\widetilde{\theta}_\ast)$, there exist $r,\rho>0$ such that
	$$\underset{u\in \partial B_r}{\inf} J(u)\geq \rho>0> \underset{u\in B_r}{\inf} J(u).$$
	%\begin{itemize}
	%	\item [(i)] $J(u)\geq \rho $ on $\partial B_r$;
	%	\item [(ii)] $\underset{u\in B_r}{\inf} J(u)<0.$
	%\end{itemize}
\end{lemma}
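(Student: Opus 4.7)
The plan is to verify the two inequalities separately and then to reconcile them by choosing $r$ and $\widetilde\theta_\star$ compatibly. The sandwich condition $s^+<p^-\le p^+<t^-$ from $\textup{(S)}$ will drive the positive lower bound on the sphere, and the test function supplied by $\lambda>\lambda_\star$ will produce a point of negative energy inside the ball.

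\textbf{Positive lower bound on $\partial B_r$.} Starting from $\textup{(A3)}$ and Proposition~\ref{norm-modular}, coupled with the continuous embeddings $E\hookrightarrow L^{s(\cdot)}(|m|,\Omega)$ (via Proposition~\ref{le.compact.imb} with $\textup{(W)}$) and $E\hookrightarrow L^{t(\cdot)}(b,\Omega)$ (via \eqref{cri.imb}), I will derive for $\|u\|\ge 1$
\begin{equation*}
J(u)\ \ge\ \frac{1}{p^+}\|u\|^{p^-}-C_1\lambda\bigl(\|u\|^{s^-}+\|u\|^{s^+}\bigr)-C_2|\theta|\bigl(\|u\|^{t^-}+\|u\|^{t^+}\bigr).
\end{equation*}
Since $s^+<p^-$, the $\lambda$-term is dominated by, say, $\tfrac{1}{3p^+}\|u\|^{p^-}$ beyond a threshold $r_1(\lambda)>1$; since $p^-<t^-$, the $|\theta|$-term overtakes $\tfrac{1}{3p^+}\|u\|^{p^-}$ only beyond some $r_2(\theta)$ with $r_2(\theta)\to\infty$ as $|\theta|\to 0^+$. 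For $|\theta|$ small, any $r\in(r_1(\lambda),r_2(\theta))$ gives $J(u)\ge\rho:=\tfrac{1}{3p^+}r^{p^-}>0$ on $\partial B_r$.

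\textbf{Negative value inside $B_r$.} Since $\lambda>\lambda_\star$, I pick $\phi\in\mathcal{A}$ with $\max_{\eta\in[s_B^-,s_B^+]}F(\phi,\eta)<\lambda$, where $F(\phi,\eta)$ denotes the quantity under the outer infimum in \eqref{lamda*} and $c_\phi:=\int_B(m/s)\phi_+^{s(x)}\,\diff x>0$. By \eqref{A6} on $\operatorname{supp}\phi\subset B$ and $\textup{(L2)}$,
\begin{equation*}
J(\tau\phi)\big|_{\theta=0}\ \le\ C_Ba_\phi\tau^{q_B}+C_pb_\phi\tau^{p_B^+}-\lambda c_\phi\tau^{s_B(\tau)},
\end{equation*}
with $a_\phi:=\int_B|\nabla\phi|^{q_B}\,\diff x$, $b_\phi:=\int_B|\nabla\phi|^{p_B^+}\,\diff x$. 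A direct weighted AM--GM shows that for each $\eta\in[s_B^-,s_B^+]$,
\begin{equation*}
\min_{\tau>0}\frac{C_Ba_\phi\tau^{q_B}+C_pb_\phi\tau^{p_B^+}}{c_\phi\tau^\eta}=F(\phi,\eta)<\lambda,
\end{equation*}
with unique minimizer $\tau_\eta=\bigl[\tfrac{C_Ba_\phi(\eta-q_B)}{C_pb_\phi(p_B^+-\eta)}\bigr]^{1/(p_B^+-q_B)}$, continuous and strictly increasing in $\eta$. Using the identity $\tau^{s_B(\tau)}=c_\phi^{-1}\int_B(m/s)\phi_+^{s(x)}\tau^{s(x)}\,\diff x$, the map $\tau\mapsto s_B(\tau)$ is continuous on $(0,\infty)$ (extended by its L'Hospital limit at $\tau=1$) with values in $[s_B^-,s_B^+]$. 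Applying the intermediate value theorem to $\tau\mapsto\tau-\tau_{s_B(\tau)}$ on $[\tau_{s_B^-},\tau_{s_B^+}]$ produces $\tau_0$ with $\tau_0=\tau_{s_B(\tau_0)}$, at which the upper bound equals $c_\phi\tau_0^{s_B(\tau_0)}\bigl[F(\phi,s_B(\tau_0))-\lambda\bigr]<0$.

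\textbf{Combining.} I will take $\widetilde\theta_\star$ to be the minimum of (i) the perturbation bound $|J(\tau_0\phi)|_{\theta=0}|/D_0$, where $D_0:=\int_B(b/t)(\tau_0\phi)_+^{t(x)}\,\diff x>0$, and (ii) the threshold ensuring $r_1(\lambda)<r_2(\theta)$ together with the availability of some $r$ in that interval satisfying $r>\|\tau_0\phi\|$. The hardest part will be the fixed-point step in the second part: one has to verify that $\tau\mapsto\tau_{s_B(\tau)}$ is a genuine continuous self-map of $[\tau_{s_B^-},\tau_{s_B^+}]$, handling in particular the degeneracy of the defining equation of $s_B(\cdot)$ at $\tau=1$ (where it is trivially satisfied by any $\eta$). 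The remainder of the argument reduces to routine norm-modular calculus and a small-$|\theta|$ continuity estimate.
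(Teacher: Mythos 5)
Your proposal is correct and takes essentially the same approach as the paper's proof. The structure is identical: (i) a coercive lower bound on $\partial B_r$ from $(\textup{A3})$, the modular inequalities of Proposition~\ref{norm-modular}, and the estimates \eqref{S4.Est.ints}; (ii) a negative test-energy point $\tau_0\phi$ produced from the chosen $\phi\in\mathcal{A}$ via a fixed-point argument for $s_B(\cdot)$; (iii) a small-$|\theta|$ threshold to reconcile the two and contain $\tau_0\phi$ in $B_r$. The one variation is the specific fixed point used: you take $\tau_0$ as the $\lambda$-independent minimizer of the ratio whose minimum in $\tau$ is exactly the quantity under the maximum in \eqref{lamda*}, so $y_\lambda(\tau_0)<0$ follows at once from $\lambda>F(\phi,s_B(\tau_0))\le\max_\eta F(\phi,\eta)$; the paper instead uses the $\lambda$-dependent $\tau_0$ from \eqref{PL.Sand.tau0} and verifies the same inequality by an explicit evaluation of $\overline y_\lambda(\tau_0)$. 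Both routes rest on the continuity of $s_B(\cdot)$, which the paper simply asserts; you rightly flag the degeneracy at $\tau=1$ as the delicate point, though you also leave it unverified, so the two proofs are on equal footing there. These are cosmetic differences; the argument is sound.
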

\begin{proof} 	
	Fix $\lambda>\lambda_\star$ with $\lambda_\star$ given by \eqref{lamda*} and let $\theta\in\R$ . Since $\lambda>\lambda_\star$ we can find $\phi\in \mathcal{A}$ such that
	\begin{equation}\label{PL.Sand.lambda}
		\max_{\eta\in [s_B^-,s_B^+]}\, \frac{C^*(\eta)}{\int_B \frac{m(x)}{s(x)}\phi_+^{s(x)}\diff x}\left(\int_B |\nabla \phi|^{p_B^+}\diff x\right)^{\frac{\eta-q_B}{p_B^+-q_B}}\left(\int_B |\nabla \phi|^{q_B}\diff x\right)^{\frac{p_B^+-\eta}{p_B^+-q_B}}<\lambda.
	\end{equation}
	Fix such $\phi$. By \eqref{A6} and $\textup{(L2)}$, we have
	\begin{align*}
		J(\tau\phi)\leq &C_p\left(\int_B |\nabla \phi|^{p_B^+}\diff x\right)\tau^{p_B^+}+C_B\left(\int_B |\nabla \phi|^{q_B}\diff x\right)\tau^{q_B}\\
		&-\lambda\left(\int_B \frac{m(x)}{s(x)}\phi_+^{s(x)}\diff x\right)\tau^{s_B(\tau)}-\theta\left(\int_B \frac{b(x)}{t(x)}(\tau\phi_+)^{t(x)}\diff x\right),\quad \forall \tau>0,
	\end{align*}
	where $s_B(\tau)$ is determined by $\phi$ and $\tau$ as in $\textup{(L2)}$. Thus,
	\begin{equation}\label{PL.Sand.J}
		J(\tau\phi)\leq y_\lambda(\tau)-\left(\int_B \frac{b(x)}{t(x)}(\tau\phi_+)^{t(x)}\diff x\right)\theta,\quad \forall \tau\geq 0,
	\end{equation}
	where $$y_\lambda(\tau):=\alpha_1\tau^{p_B^+}-\beta_1\lambda \tau^{s_B(\tau)}+\gamma_1\tau^{q_B}$$
	with $\alpha_1:=C_p\int_B |\nabla \phi|^{p_B^+}\diff x>0,\, \beta_1:=\int_B \frac{m(x)}{s(x)}\phi_+^{s(x)}\diff x>0$ and $\gamma_1:=C_B\int_B |\nabla \phi|^{q_B}\diff x>0.$ 
	We claim that there exists $\tau_0=\tau_0(\lambda)\in (0,\infty)$ such that
	\begin{equation}\label{PL.Sand.xi_0}
		y_\lambda(\tau_0)<0.
	\end{equation}
	Indeed, we write
	\begin{equation}\label{PL.Sand.g*}
		y_\lambda(\tau)=\tau^{q_B}\, \overline{y}_\lambda(\tau)\ \text{for}\ \tau>0,
	\end{equation}
	where $\overline{y}_\lambda(\tau):=\alpha_1\tau^{p_B^+-q_B}-\beta_1\lambda \tau^{s_B(\tau)-q_B}+\gamma_1$ for $\tau>0.$ It is clear that $s_B(\cdot): (0,\infty)\to [s_B^-,s_B^+]$ is a continuous function and hence, there exist $\tau_0>0$ such that
	\begin{equation}\label{PL.Sand.tau0}
		\tau_0=\left(\frac{s_B(\tau_0)-q_B}{p_B^+-q_B}\alpha_1^{-1}\beta_1\lambda\right)^{\frac{1}{p_B^+-s_B(\tau_0)}}.
	\end{equation}
	We have 
	\begin{align*}
		\overline{y}_\lambda\left(\tau_0\right)=\gamma_1-\frac{p_B^+-s_B(\tau_0)}{p_B^+-q_B}\left(\frac{s_B(\tau_0)-q_B}{p_B^+-q_B}\right)^{\frac{s_B(\tau_0)-q_B}{p_B^+-s_B(\tau_0)}}\alpha_1^{\frac{q_B-s_B(\tau_0)}{p_B^+-s_B(\tau_0)}}\beta_1^{\frac{p_B^+-q_B}{p_B^+-s_B(\tau_0)}}\lambda^{\frac{p_B^+-q_B}{p_B^+-s_B(\tau_0)}}.
	\end{align*}
	Thus, $\overline{y}_\lambda\left(\tau_0\right)<0$ due to \eqref{PL.Sand.lambda}; hence,
	\begin{equation}\label{PL.Sand.g(tau0)}
		y_\lambda\left(\tau_0\right)<0,
	\end{equation}
	due to \eqref{PL.Sand.g*}. %Moreover, we can find $\tau_1>\tau_0$  such that 
	%\begin{equation*}
	%	y_\lambda(\tau)>0,\forall \tau>\tau_1.
	%\end{equation*}
	Then, from \eqref{PL.Sand.g(tau0)} and \eqref{PL.Sand.J} we have
	\begin{equation}\label{PL.Sandwich.loc.behaviour.xi_0}
		J(\tau_0\phi)\leq y_\lambda(\tau_0)-\left(\int_B \frac{b(x)}{t(x)}(\tau_0\phi_+)^{t(x)}\diff x\right)\theta<0
	\end{equation}
	provided 
	\begin{equation}\label{PL.Sand.theta1}
		\theta>-\bar{\theta}_\ast,
	\end{equation}	
	where $\bar{\theta}_\ast:=-\frac{y_\lambda(\tau_0)}{\int_B \frac{b(x)}{t(x)}(\tau_0\phi_+)^{t(x)}\diff x}>0$.
	%Note that for $u\in E$ with $\|u\|\geq 1$ we deduce from Propositions~\ref{Holder ineq}-\ref{norm-norm} and \eqref{Sb} that
%	\begin{align}\label{PL.Sand.int1}
	%	\notag \int_\Omega|m||u|^{s(x)}\diff x&\leq 2\left\|mb^{-1}\right\|_{L^{\frac{t(\cdot)}{t(\cdot)-s(\cdot)}}(b,\Omega)}\left\||u|^s\right\|_{L^{\frac{t(\cdot)}{s(\cdot)}}(b,\Omega)}\\
	%	&\leq 2 \left\|mb^{-1}\right\|_{L^{\frac{t(\cdot)}{t(\cdot)-s(\cdot)}}(b,\Omega)}\left(S_b^{-s^+}+1\right)\|u\|^{s^+}
	%\end{align}
	%and
%	\begin{align}\label{PL.Sand.int2}
	%	\int_\Omega b|u|^{t(x)}\diff x\leq \left(S_b^{-t^+}+1\right)\|u\|^{t^+}.
	%\end{align}
	Set
	\begin{equation}\label{PL.Sand.r}
		r_\lambda:=\max\left\{1+\tau_0\|\phi\|,\left(2p^+C_1^{s^+}(s^-)^{-1}\lambda\right)^{\frac{1}{p^--s^+}}\right\}
	\end{equation}
	and
	\begin{equation}\label{PL.Sand.theta2}
		\widetilde{\theta}_\ast:=\min\left\{\bar{\theta}_\ast,\frac{t^- r_\lambda^{p^--t^+}}{2p^+C_1^{t^+}}\right\},
	\end{equation}
	where $C_1$ is given in \eqref{S4.Est.ints}. Then, for any $\theta\in (-\widetilde{\theta}_\ast,\widetilde{\theta}_\ast)$, by choosing $r=r_\lambda$ and $\rho=\frac{C_1^{t^+} r_\lambda^{t^+}}{t^-}\left(\widetilde{\theta}_\ast-|\theta|\right)$ and utilizing \eqref{S4.Est.ints} and \eqref{PL.Sand.r}-\eqref{PL.Sand.theta2} we have
	\begin{align*}
		J(u)&\geq \frac{1}{p^+}\|u\|^{p^-}-\frac{C_1^{t^+}\lambda}{s^-}\|u\|^{s^+}-\frac{C_1^{t^+}|\theta|}{t^-}\|u\|^{t^+}\\
		&\geq \frac{1}{2p^+}\|u\|^{p^-}-\frac{C_1^{t^+}|\theta|}{t^-}\|u\|^{t^+}=\frac{C_1^{t^+}r_\lambda^{t^+}}{t^-}\left(\widetilde{\theta}_\ast-|\theta|\right),\ \forall u\in\partial B_r,
	\end{align*}
	i.e., 
	\begin{equation}\label{PL.Sandwich.Geo.partial B_r}
		J(u)\geq\rho,\ \ \forall u\in\partial B_r.
	\end{equation}
	Finally, note that $\tau_0\phi\in B_r$ by the choice of $r$ and \eqref{PL.Sand.r};  hence, \eqref{PL.Sandwich.loc.behaviour.xi_0} yields 
	$$\underset{u\in B_r}{\inf} J(u)\leq J(\tau_0\phi)<0.$$
	This and \eqref{PL.Sandwich.Geo.partial B_r} complete the proof.
	
\end{proof}
We are now in a position to give a proof of Theorem~\ref{Theo.Sandwich}. 

%~~~~~~~~~~~~~~~~~~~~~~~ PROOF OF THEOREM FOR SANDWICH CASE ~~~~~~~~~~~~~~~~~~~~~~~~%

\begin{proof}[\textbf{Proof of Theorem~\ref{Theo.Sandwich}}] Let $\lambda>\lambda_\star$ be arbitrary and fixed. Note that for $\theta\in (0,1)$, the right-hand side of \eqref{PS_c2} can be rewritten as
	\begin{align*}
		C_2\theta^{-\frac{1}{h^+-1}}-C_3(\lambda)\theta^{-\frac{1}{\ell^--1}}=\theta^{-\frac{1}{h^+-1}}\left[C_2-C_3(\lambda)\theta^{\frac{1}{h^+-1}-\frac{1}{\ell^--1}}\right],
	\end{align*}	
	where $C_2, C_3(\lambda)>0$ are positive constants independent of $\theta.$	Note that $\frac{1}{h^+-1}-\frac{1}{\ell^--1}>0$ due to the assumption $\left(\frac{t}{p}\right)^+<\left(\frac{t}{s}\right)^-$. Hence, we can choose $\theta_\star\in \big(0,\min\{1,\widetilde{\theta}_\ast\}\big)$, where $\widetilde{\theta}_\ast$ is as in Lemma~\ref{le.sandwhich.local_geo}, such that
	\begin{align}\label{PT.sandwich.theta}
		0<&\left(\frac{1}{p^+}-\frac{1}{t^-}\right)S_b^N\min\left\{\theta^{-\frac{1}{h^+-1}},\theta^{-\frac{1}{h^--1}}\right\}\notag\\
		&-K\max\{\theta^{-\frac{1}{\ell^+-1}},\theta^{-\frac{1}{\ell^--1}}\}\max\left\{|\lambda|^{\frac{\ell^+}{\ell^+-1}},|\lambda|^{\frac{\ell^-}{\ell^--1}}\right\}, \ \forall\, \theta\in (0,\theta_\star).
	\end{align}
	Let $\theta\in [0,\theta_\star)$. By Lemma~\ref{le.sandwhich.local_geo}, there exist $r,\rho>0$ such that
	\begin{equation}\label{PT.sandwich.r-rho}
		\underset{u\in \partial B_r}{\inf} J(u)\geq \rho>0> \underset{u\in B_r}{\inf} J(u)=:c.
	\end{equation}	
	%By the Ekeland variational principle, for each $0<\epsilon<\underset{u\in\partial B_r}\inf J(u)-c,$ we find $u_{\epsilon}\in \overline{B}_r $ such that
	%\begin{equation}\label{PT.sandwich.ekeland}
	%\begin{cases}
	%J(u_{\epsilon})\leq c+\epsilon,\\
	%J(u_{\epsilon})<J(u)+\epsilon\|u-u_{\epsilon}\|, \ \forall u\in \overline{B}_r , u \ne u_{\epsilon}.
	%\end{cases}
	%\end{equation}
	%The fact that $J(u_{\epsilon})\leq c+\epsilon<\underset{u\in\partial B_r}\inf J(u)$ yields $u_{\epsilon}\in B_r$. This fact and \eqref{PT.sandwich.ekeland} imply that $u_\epsilon$ is a local minimum of the funtional $\widetilde{J}(u)=J(u)+\epsilon\|u-u_{\epsilon}\|$. Hence, we easily obtain that
	%\begin{equation}\label{PT.sandwich.estJ'}
	%\|J'(u_\epsilon)\|_{\ast}\leq \epsilon.
	%\end{equation}
	%Here $\|\cdot\|_\ast$ denotes the usual norm of $W^{-1,p'}(\Omega).$ From \eqref{PT.sandwich.ekeland} and \eqref{PT.sandwich.estJ'}, we deduce a $\textup{(PS)}_c$ sequence $\{u_n\}\subset B_r$. 
	Then, arguing as in \cite[Proof of Theorem 3.1]{HS.TJM2015} in which the Ekeland variational principle was applied, we find a $\textup{(PS)}_c$-sequence $\{u_n\}_{n=1}^\infty.$ On the other hand, from \eqref{PT.sandwich.theta} and \eqref{PT.sandwich.r-rho}, $J$ satisfies the $\textup{(PS)}_c$ condition in view of Lemma~\ref{le.sandwich.PS}, and hence $u_n\to u$ in $E$ as $n\to\infty.$ Thus,
	$$J'(u)=0\ \ \text{and}\ \ J(u)=c<0.$$
	That is, $u$ is a nontrivial nonnegative solution to problem~\eqref{Eq.Sandwich}.
	
	%{\color{red} If $m\in L^\infty(\Omega)$ is additionally assumed, then $u\in C^{1,\beta}(\overline{\Omega})$ for some $\beta\in(0,1)$ in view of .... Applying the strong maximum principle \cite[Theorem 2]{Mon}, $u$ is indeed positive everywhere in $\Omega$ and the proof of (i) is complete.}
	
\end{proof}
Let us conclude this section by pointing out that Theorem~\ref{Theo.Sandwich} remains valid for subcritical problem. More precisely, consider the following problem:
\begin{align}\label{Eq.Sandwich.sub}
	\begin{cases}
		-\operatorname{div}a(x,\nabla u)=\lambda m(x)|u|^{s(x)-2}u+\mu\, \omega(x)|u|^{r(x)-2}u \quad \text{in } \Omega,\\
		u=0\quad \text{on } \partial \Omega, 
	\end{cases}
\end{align}
where $(\textup{A0})-(\textup{A5})$ hold; $p,q,s,r\in C_+(\overline{\Omega})$ such that $s^+<p^-\leq p^+<r^-$ and $r(x)<p^*(x)$ for all $x\in\overline{\Omega}$; and the weights $m$ and $\omega$ satisfy the following assumption.
\begin{itemize}
	\item [$\textup{(A)}$] $m\in L^{\frac{p^\ast(\cdot)}{p^\ast(\cdot)-s(\cdot)}}(\Omega)$, $\omega\in L_+^{\frac{p^\ast(\cdot)}{p^\ast(\cdot)-r(\cdot)}}(\Omega)$ and there exists a ball $B\subset\Omega$ satisfying
	\begin{itemize}
		\item [(i)] $q_B<s_B^-$;
		\item [(ii)] $|\{x\in B:\, m(x)>0 \}|>0$;
		\item [(iii)] there exists $\phi\in C_c^\infty(B)$ such that  $\int_B \frac{m(x)}{s(x)}\phi_+^{s(x)}\diff x>0$ and for each $\tau>0$, there exists $s_B(\tau)\in [s_B^-,s_B^+]$ such that
		$$\int_B \frac{m(x)}{s(x)}\phi_+^{s(x)}\tau^{s(x)}\diff x=\tau^{s_B(\tau)}\int_B \frac{m(x)}{s(x)}\phi_+^{s(x)}\diff x.$$ 
	\end{itemize} 
\end{itemize}
\begin{theorem}\label{Theo.Sandwich.Subcritical}
	Let $(\textup{A0})-(\textup{A5})$ and $\textup{(A)}$ hold with $s^+<p^-\leq p^+<r^-$ and $r(x)<p^*(x)$ for all $x\in\overline{\Omega}$. Then, for each given $\lambda>\lambda_\star$ with $\lambda_\star$ given by \eqref{lamda*}, there exists $\mu_\ast>0$ such that for any $\mu\in (-\mu_\ast,\mu_\ast)$, problem~\eqref{Eq.Sandwich.sub} has a nontrivial nonnegative solution and has two nontrivial nonnegative solutions for any $\mu\in (0,\mu_\ast)$. Moreover, if $r(\cdot)$ is constant in $\Omega$, it suffices to assume $\omega\in L^{\frac{p^\ast(\cdot)}{p^\ast(\cdot)-r(\cdot)}}(\Omega)$ and $|\{x\in \Omega:\, \omega(x)>0 \}|>0$ instead of $\omega\in L_+^{\frac{p^\ast(\cdot)}{p^\ast(\cdot)-r(\cdot)}}(\Omega)$. 
\end{theorem}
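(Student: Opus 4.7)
The plan is to combine the Ekeland argument that yielded Theorem~\ref{Theo.Sandwich} with the classical Mountain Pass Theorem, exploiting the subcritical growth of the $\mu$-term to bypass the concentration--compactness step. I would define the functional $\widetilde J:E\to\R$ by
\begin{equation*}
\widetilde J(u):=\int_\Omega A(x,\nabla u)\diff x-\lambda\int_\Omega\frac{m(x)}{s(x)}u_+^{s(x)}\diff x-\mu\int_\Omega\frac{\omega(x)}{r(x)}u_+^{r(x)}\diff x,
\end{equation*}
whose nonnegative critical points are exactly the nonnegative weak solutions to \eqref{Eq.Sandwich.sub}. Since $r(x)<p^*(x)$ on $\overline\Omega$, Proposition~\ref{le.compact.imb} provides compactness of the relevant weighted embeddings, so the argument of Lemma~\ref{le.sandwich.PS}(i) goes through without any concentration--compactness principle; combined with the $(S_+)$-property of Leray--Lions operators, this shows that $\widetilde J$ satisfies $\textup{(PS)}_c$ for every $c\in\R$ and every $\mu\in\R$.

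For the first nonnegative solution I would rerun the proof of Lemma~\ref{le.sandwhich.local_geo} with $\theta b(x)|u|^{t(x)}$ replaced by $\mu\omega(x)|u|^{r(x)}$. The ``local'' step at the test function $\tau_0\phi$ supplied by $\textup{(A)}(iii)$ still yields $\widetilde J(\tau_0\phi)<0$ provided $|\mu|$ is small enough, and the bound on $\partial B_r$ becomes
\begin{equation*}
\widetilde J(u)\ge\frac{1}{p^+}\|u\|^{p^-}-\frac{C_1^{s^+}\lambda}{s^-}\|u\|^{s^+}-\frac{\widetilde C_1^{r^+}|\mu|}{r^-}\|u\|^{r^+}\quad\text{for }\|u\|>1.
\end{equation*}
Choosing $r=r_\lambda$ as in \eqref{PL.Sand.r} so that the first term dominates twice the second at $\|u\|=r_\lambda$, and then taking $\mu_\ast>0$ small enough that the third term at $\|u\|=r_\lambda$ is controlled by half of the first, gives the geometry $\inf_{\partial B_{r_\lambda}}\widetilde J\ge\rho>0>\inf_{B_{r_\lambda}}\widetilde J$ for every $|\mu|<\mu_\ast$. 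The Ekeland variational principle then produces a $\textup{(PS)}_c$-sequence at $c=\inf_{B_{r_\lambda}}\widetilde J<0$ that, by the $\textup{(PS)}$ condition, has a subsequence converging to a nontrivial nonnegative critical point $u_1$ of $\widetilde J$ with $\widetilde J(u_1)<0$.

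For the second nonnegative solution in the regime $\mu\in(0,\mu_\ast)$ I would invoke the Mountain Pass Theorem. The geometry at the origin is already supplied, so it only remains to find an ``over the mountain'' point. Here I would pick any $\psi\in C_c^\infty(\Omega)$ with $\psi\ge 0$ and $\int_\Omega\omega\psi^{r(x)}\diff x>0$; when $\omega\in L^{p^*(\cdot)/(p^*(\cdot)-r(\cdot))}_+(\Omega)$ any nonzero $\psi\ge 0$ works, and in the constant-$r$ case $\psi$ can be localized inside $\{\omega>0\}$, which has positive measure by assumption. Using \eqref{A2'} together with Proposition~\ref{norm-modular}, one obtains an estimate of the form $\widetilde J(\tau\psi)\le C(\psi)(1+\tau^{p^+})-\mu\,c(\psi)\tau^{r^-_\psi}$ for $\tau>1$, where $r^-_\psi$ is the infimum of $r$ on a sublevel set of $\psi$ of positive measure; since $r^-_\psi\ge r^->p^+$ this tends to $-\infty$ with $\tau$, so some $e=\tau_1\psi\notin\overline B_{r_\lambda}$ satisfies $\widetilde J(e)<0$. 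The Mountain Pass Theorem plus $\textup{(PS)}$ then delivers a critical point $u_2$ of $\widetilde J$ with $\widetilde J(u_2)\ge\rho>0>\widetilde J(u_1)$, so $u_1\neq u_2$ and both are nontrivial and nonnegative.

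The main obstacle, beyond bookkeeping the threshold $\mu_\ast$ in terms of $\lambda$ and the embedding constants, will be extracting a pure-power lower bound of the form $\tau^{r^-_\psi}$ for the variable-exponent integral $\int_\Omega\omega(\tau\psi)^{r(x)}\diff x$ as $\tau\to\infty$, which is what is needed to defeat the $\tau^{p^+}$ growth coming from $\int A(x,\nabla(\tau\psi))\diff x$; this is handled by restricting to $\{\psi\ge\delta\}$ for a suitable $\delta>0$, on which $(\tau\psi)^{r(x)}\ge(\delta\tau)^{r^-}$ once $\delta\tau>1$. The relaxation in the constant-$r$ case is then transparent, since one may localize $\psi$ on $\{\omega>0\}$ to ensure $\int\omega\psi^r>0$ even when $\omega$ changes sign elsewhere.
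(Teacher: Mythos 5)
Your plan matches the paper's intended method: the paper omits the proof of Theorem~\ref{Theo.Sandwich.Subcritical} with a reference to \cite[Theorem~1.2]{HS.AML21}, and the remark preceding the theorem explicitly says the second solution comes from the Mountain Pass Theorem, while the first comes from the same Ekeland machinery used to prove Theorem~\ref{Theo.Sandwich}. So Ekeland for the local minimizer below $0$, Mountain Pass for the second nonnegative solution when $\mu>0$, and compact subcritical embeddings to get $\textup{(PS)}_c$ for all levels without concentration--compactness is exactly what is intended, and your geometry estimates and the constant-$r$ relaxation are handled the right way.

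One small gap to patch: you appeal to ``the argument of Lemma~\ref{le.sandwich.PS}(i)'' for boundedness of $\textup{(PS)}_c$-sequences for \emph{every} $\mu\in\R$, but that argument tests with $\widetilde J-\tfrac{1}{\alpha}\langle\widetilde J',\cdot\rangle$ for some $\alpha\in(p^+,r^-]$, and the resulting $\omega$-contribution $\mu\int_\Omega\omega\left(\tfrac{1}{\alpha}-\tfrac{1}{r(x)}\right)u_+^{r(x)}\diff x$ has a \emph{favorable} sign only when $\mu\ge 0$; for $\mu<0$ this term is of the order $-|\mu|\|u_n\|^{r^+}$ and wrecks the lower bound. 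The fix is trivial and should be stated: for $\mu\le 0$ the functional $\widetilde J$ is coercive outright (the $\omega$-term is nonnegative and $s^+<p^-$), so $\textup{(PS)}_c$-sequences are automatically bounded; for $\mu>0$ use the $\alpha$-trick as you describe. With that split made explicit, the proof is complete and agrees with the paper's (omitted) argument.
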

This result can be proved using the same method as in \cite[Theorem 1.2]{HS.AML21} so we omit it.

\section{Comment on the regularity of solutions}\label{Comments}
Let functions  $d$ in $(\textup{F1})$, $a_j$ in $(\textup{F4})$
% $m$ in $(\textup{W})$ 
or $m$ in $(\textup{W})$ or $m, \omega$ in $(\textup{A})$ be in the class $L^\infty(\Omega)$. Then by \cite{HKWZ}, all aforementioned solutions are of class $L^\infty(\Omega)$. Moreover, we have that:
\begin{itemize}
	\item [(i)] when $p\in C^{1}(\overline{\Omega})$ and $A(x,u,\nabla u):=a(x,\nabla u)$ additionally satisfies Assumption $(A^k)$ in \cite{Fan2007}, for instance, the $p(\cdot)$-Laplacian or the generalized mean curvature operator, any aforementioned solution is also of class $C^{1,\beta}(\overline{\Omega })$ and the convergence in Theorem~\ref{Theo.Sublinear} is in the $C^1(\overline{\Omega })$-topology (see \cite[Theorem 1.2]{Fan2007}); 
	\item [(ii)]when $p(\cdot)$ is constant and $A(x,u,\nabla u):=a(x,\nabla u)$ additionally satisfies conditions (1.10a)-(1.10c) in \cite{Liberman1991}, for instance, the case of the multiple $p$-Laplacian, any aforementioned solution is also of class $C^{1,\beta}(\overline{\Omega })$ and the convergence in Theorem~\ref{Theo.Sublinear} is in the $C^1(\overline{\Omega })$-topology (see \cite[Theorem 1.7 and the comments below Theorem 1.7]{Liberman1991}).
\end{itemize}

%$$$$$$$$$$$$$$$$$$$$$$$$$$$$$ REFERENCES $$$$$$$$$$$$$$$$$$$$$$$$$$$$$%
\subsection*{Acknowledgements}
The first author was supported by University of Economics Ho Chi Minh City, Vietnam. The second author was supported by the National
Research Foundation of Korea Grant funded by the Korea Government (MEST)
(NRF-2021R1I1A3A0403627011).

\end{document}